\newtheorem{thm}{Theorem}[section]
\newtheorem{corollary}[thm]{Corollary}
\newtheorem{lemma}[thm]{Lemma}
\newtheorem{proposition}[thm]{Proposition}
\newtheorem{example}[thm]{Example}
\newtheorem{remarks}[thm]{Remark}
\newtheorem{hyp}[thm]{Hypothesis}
\theoremstyle{definition}
\newtheorem{defn}{Definition}[section]
 \theoremstyle{remark}
\numberwithin{equation}{section}
\newcommand{\ee}{\mathbb{E}}
\newcommand{\nn}{\mathbb{N}}
\newcommand{\rr}{\mathbb{R}}
\newcommand{\pp}{\mathbb{P}}
\newcommand{\e}{\varepsilon}
\def\AA{\mathcal A}
\def\BB{\mathcal B}
\def\DD{\mathcal D}
\def\FF{\mathcal F}
\def\EE{\mathcal E}
\def\LL{\mathcal L}
\def\PP{\mathcal P}
\def\SS{\mathcal S}
\def\d{{\rm d}}
\def\AA{\mathcal A}
\def\BB{\mathcal B}
\def\DD{\mathcal D}
\def\FF{\mathcal F}
\def\EE{\mathcal E}
\def\GG{\mathcal G}
\def\HH{\mathcal H}
\def\LL{\mathcal L}
\def\PP{\mathcal P}
\def\SS{\mathcal S}
\def\<{\langle}
\def\>{\rangle}
\def\beq{\begin{equation}}
\def\nneq{\end{equation}}
\def\bdef{\begin{defn}}
\def\ndef{\end{defn}}
\def\bthm{\begin{thm}}
\def\nthm{\end{thm}}
\def\bprop{\begin{proposition}}
\def\nprop{\end{proposition}}
\def\brmk{\begin{remarks}}
\def\nrmk{\end{remarks}}
\def\bexa{\begin{exa}}
\def\nexa{\end{exa}}
\def\blem{\begin{lemma}}
\def\nlem{\end{lemma}}
\def\bcor{\begin{corollary}}
\def\ncor{\end{corollary}}
\def\benu{\begin{enumerate}}
\def\nenu{\end{enumerate}}
\def\bhyp{\begin{hyp}}
\def\nhyp{\end{hyp}}
\def\<{\langle}
\def\>{\rangle}
\date{}
\def\bexe{\begin{exe}}
\def\nexe{\end{exe}}
\def\bprf{\begin{proof}}
\def\nprf{\end{proof}}
\def\bdes{\begin{description}}
\def\ndes{\end{description}}
\title[LDP and MDP for the Multivalued McKean-Vlasov SDEs with jumps]{Large and Moderate deviation principles for the Multivalued McKean-Vlasov SDEs with jumps}
\thanks{Corresponding authors: Wei Liu and Fengwu Zhu}
\author{Lingyan Cheng}
\address{Lingyan Cheng. School of Mathematics and Statistics, Nanjing University of Science and Technology,
Nanjing 210094, Jiangsu, PR China.}
\email{cly@njust.edu.cn}
\author{Caihong Gu}
\address{Caihong Gu. School of Mathematics and Statistics, Wuhan University, Wuhan 430072, Hubei, PR China.}
\email{gucaihong@whu.edu.cn}
\author{Wei Liu}
\address{Wei Liu\\School of Mathematics and Statistics, Wuhan University, Wuhan 430072, Hubei, PR China.}
\email{wliu.math@whu.edu.cn}
\author{Fengwu Zhu}
\address{Fengwu Zhu. School of Mathematics and Statistics, Wuhan University, Wuhan 430072, Hubei, PR China.}
\email{fwzhu\_math@whu.edu.cn}
\date{}
\begin{document}
\maketitle

\noindent {\bf Abstract:}
By using the weak convergence method, we establish the large and moderate deviation principles for the multivalued McKean-Vlasov SDEs with non-Lipschitz coefficients driven by L\'{e}vy noise in this paper. The Bihari's inequality is used to overcome the challenges arising from the non-Lipschitz conditions on the coefficients.
 \vskip0.3cm

\noindent{\bf Keyword:} {Multivalued McKean-Vlasov Equation;  Large deviations; Moderate deviations; Weak convergence method; L\'{e}vy noise.
}
 \vskip0.3cm

\noindent {\bf MSC: } { 60H10, 60F10.}
\vskip0.3cm

\section{Introduction}

\noindent
Consider the following multivalued McKean-Vlasov stochastic differential equations (MMVSDEs for short) driven by L\'{e}vy noise:
\beq\label{eq1}
\begin{cases}
\d X_t\in -A(X_t) \d t +b(X_t,\LL_{X_t})\d t+\sigma (X_t,\LL_{X_t})\d W_t\\
\quad \quad \quad+ \int_{Z} G(X_{t-},\LL_{X_t},z)\tilde{N} (\d z,\d t),\ t\in[0,T], \\
X_0=x_0 \in \overline{D(A)},
\end{cases}
\nneq
where $A$ is a multivalued maximal monotone operator defined on (a domain within) $\mathbb{R}^d$ (see Definition \ref{defA}), $\mathcal{L}_{X_t}$ denotes the law of $X_t$, $W$ is a Brownian motion (BM for short), $N$ is a Poisson random measure (PRM for short) defined on $(\Omega, \mathcal{F}, \mathbb{P})$ with intensity measure $\nu$, $\tilde{N}(\mathrm{d}z, \mathrm{d}t) := N(\mathrm{d}z, \mathrm{d}t) - \nu(\mathrm{d}z) \mathrm{d}t$ denotes the compensated Poisson measure, and $Z$ is a locally compact Polish space. We assume that $N$ and $W$ are independent.


For any given probability measure $\mu$, let
\begin{eqnarray}\label{coef}
    b(x,\mu):=\int_{\mathbb{R}^d} \tilde{b}(x,y)\mu(dy),~~\sigma (x,\mu):=\int_{\mathbb{R}^d} \tilde{\sigma}(x,y)\mu(dy),\nonumber\\
    \int_{Z}G(x,\mu,z)\nu(dz):=\int_{Z}\int_{\mathbb{R}^d} \tilde{G}(x,y,z)\mu(dy)\nu(dz),
\end{eqnarray}
where $\tilde{b}: \mathbb{R}^d\times\mathbb{R}^d\rightarrow\mathbb{R}^d$, $\tilde{\sigma}: \mathbb{R}^d\times\mathbb{R}^d\rightarrow\mathbb{R}^d\otimes\mathbb{R}^d$ and $\tilde{G}: \mathbb{R}^d\times\mathbb{R}^d\times Z \rightarrow\mathbb{R}^d$ are all continuous functions.

When $A=0$ and $G=0$, equation \eqref{eq1} is the classical McKean-Vlasov stochastic differential equations (MVSDEs for short) diven by Brownian motion, which was first suggested by Kac \cite{Kac1,Kac2} as a stochastic toy model for the Vlasov kinetic equation of plasma, and then introduced by McKean \cite{Mckean}. The theory and applications of MVSDEs and associated interacting particle systems have been extensively studied by a large number of researchers under various settings. One can refer to \cite{CGM08,G92,GH03,GLWZ22,GLWZ21,HSS21,LW20,LWZ21,Mckean,MV20,RZ21} and the references therein.  The large deviation principle (LDP for short) was established by Herrmann et al. \cite{Herrmann} and Dos Reis et al. \cite{Reis}. The MVSDEs with jumps have been extensively studied in recent years, see \cite{JMW08,MSSZ20,LSZZ,NBKDR} etc. The third author et al.  \cite{LSZZ} established the LDP for the MVSDEs driven by L\'{e}vy noise.

When $A \neq 0$ and $G=0$, equation \eqref{eq1} become the MMVSDEs driven by Brownian motion. C\'{e}pa \cite{Cepa1,Cepa2} first studied the classical Multivalued stochastic differential equations (MSDEs for short), i.e. the case that $b$ and $\sigma$ are independent of $\LL_{X_t}$. In the papers, C\'{e}pa introduced a pair of continuous $\FF_t$-adapted processes $(X_t,K_t)$ to solve the MSDEs. After that, many researchers have begun to study the MSDEs, see \cite{Hu10,RW12,RWZ15,RWZ10,RX10,RXZ10,Xu,ZH20,ZXC07}.  Ren et al. \cite{RXZ10} proved the Freidlin-Wentzell LDP for MSDES by using the weak convergence method developed by Dupuis and Ellis \cite{DE97},  Ren et al.  \cite{RWZ15} showed a general LDP, and Zhang \cite{ZH20} established the moderate deviation principle (MDP for short). Though there are a lot of results about MSDEs, there are only  few results on MMVSDEs (i.e. $b$ and $\sigma$ depend on $\LL_{X_t}$).  Recently, Chi \cite{Chi14} proved the existence and uniqueness of strong solutions for MMVSDEs, and obtained the existence of the weak solutions for them. Qiao and Gong \cite{QG} established the well-posedness and stability under non-Lipschitz conditions on the coefficients. The third author et al.  \cite{FLQZ23} established the LDP, MDP and central limit theorem.

When $A \neq 0$ and $G\neq 0$, if the coefficients $b$ and $\sigma$ are independent of $\LL_{X_t}$, equation \eqref{eq1} become the MSDEs with jumps. We emphasize that for a multi-valued operator $A$ whose domain does not necessarily cover the entire space, the continuity of sample paths in the stochastic processes under study is indispensable for establishing the existence of solutions. It is worth noting that in the case of jump processes, intuition suggests and a simple example can demonstrate that the equation may admit no solution if the domain of $A$ is not the entire space.  Ren and Wu \cite{RW11} proved the existence and uniqueness of solutions of MSDEs driven by Poisson point processes under an additional assumption that the domain of the multivalued maximal monotone operator is the whole space $\rr^d$. Later in \cite{Wu12}, Wu relaxed the additional assumption.  Wu \cite{Wu11} established the LDP for MSDEs with Poisson jumps. Ren and Wu \cite{RW13} studied the optimal control problem about the MSDEs with L\'{e}vy jumps. When the coefficients $b$ and $\sigma$ depend on $\LL_{X_t}$, we prove the existence and uniqueness of the strong solution of MMVSDE \eqref{eq1} with jumps in another paper \cite{CGLZ24}, as well as the weak solution. However there are still few works about MMVSDEs with jumps.

In this paper, we aim to study the LDP and MDP about MMVSDEs with non-lipschitz coefficients driven by L\'{e}vy noise. For any $\e \in (0,1]$, consider the following MMVSDEs with jumps:
\beq\label{eq2}
\begin{cases}
\d X_t^\e \in -A(X_t^\e) \d t +b_\e(X_t^\e,\LL_{X_t^\e})\d t+\sqrt{\e}\sigma_\e (X_t^\e,\LL_{X_t^\e})\d W_t\\
\quad \quad \quad+ \e \int_{Z} G_\e (X^\e_{t-},\LL_{X_t^\e},z)\tilde{N}^{\e^{-1}} (\d z,\d t),\ t\in[0,T], \ \e \in (0,1], \\
X_0^\e=x_0 \in \overline{D(A)}.
\end{cases}
\nneq
Assume that  $(X^\e,K^\e)$ is a {\bf strong} solution of \eqref{eq2} (see Definition \ref{solution}). Our aim is to investigate the deviations of $X^\e$ from the deterministic solution $X^0$ by studying the asymptotic behavior of the trajectory
$$
\frac{X_t^\e- X_t^0}{\lambda(\e)},
$$
where $(X^0,K^0)$ satisfies the following mutivalued differential equation
\beq\label{eq00}
\begin{cases}
\d X_t^0 \in -A(X_t^0) \d t +b(X_t^0,\LL_{X_t^0})\d t,\ t\in[0,T], \\
X_0^0=x_0 \in \overline{D(A)}.
\end{cases}
\nneq

Our contribution is as follows:
\benu
\item
when $\lambda(\e) \equiv 1$, we establish the LDP for \eqref{eq1};
\item
when $\lambda(\e)$ satisfies
$$
\lambda(\e) \to 0,\quad \frac{\e}{ \lambda^2(\e)} \to 0 \mbox{ as } \e \to 0,
$$
we establish the MDP for \eqref{eq1}.
\nenu

Large deviation principles can provide an exponential estimate for tail probability in terms of some explicit rate function. Recent years, there are a lot of works on LDP for classical stochastic evolution equations and SPDEs driven by BM and PRM. Among the approaches to deal with these problems, the weak convergence method based on a variational representation for positive measurable functionals of a BM and PRM, see \cite{BCD13,BD00,BDG16,BDM08,BDM11}. The reader can refer to \cite{BD19} for an excellent review of the advances on the weak convergence method during the past decade.

For the MVSDE without jumps, Herrmann et al. \cite{Herrmann} obtained the LDP in path space equipped with the uniform
norm, assuming the superlinear growth of the drift but imposing coercivity condition, and a
constant diffusion coefficient. Dos Reis et al. \cite{Reis} obtained LDPs in path space topologies
under the assumption that coefficients $b$ and $\sigma$ have some extra regularity with respect to
time. The approach in \cite{Herrmann} and  \cite{Reis} is to first replace the distribution $\LL_{X_t^\e}$ of $X_t^\e$ in the
coefficients with a Dirac measure $\delta_{X_t^0}$ and then to use discretization, approximation and exponential equivalence arguments. However, the discretization and approximation techniques
can not be applied to the case of L\'evy  noise and also require stronger conditions
on the coefficients even in the Gaussian case.
Therefore, in this paper, we apply the weak convergence method to establish the LDP and MDP for $X^\e$ under non-Lipschitz condition. The Bihari's inequality is used to overcome the challenges arising from the non-Lipschitz conditions on the coefficients.

The rest of the paper is organized as follows. In section 2, we introduce some notions and notations about MMVSDEs and the Laplace principle. In section 3, we present the main results on LDP and MDP for \eqref{eq1}. The proofs will be given in section 4.

\section{Preliminaries}

\vskip0.3cm
In this section, we recall some basic notions and notations.

\subsection{Notations}

\subsubsection{Notation and Preliminaries}
Set $\nn: =\{1,2,3,...\}$, $\rr:=(-\infty, +\infty)$ and $\rr_+:=[0,+\infty)$. For a metric space $S$, define the following notations:
\benu
\item
$\BB (S)$: the Borel $\sigma$-field on $S$;
\item
$C_c(S)$: the space of real-valued continuous functions with compact supports;
\item
$C([0,T],S)$: $C([0,T],S)$: the space of continuous functions $f:[0,T] \to S$ equipped with the uniform convergence topology;
\item
$\DD([0,T],S)$: the space of all c\`{a}dl\`{a}d functions $f:[0,T] \to S$ equipped with the Skorokhod topology.
\nenu

For an $S$-valued measurable map $X$ defined on some probability space $(\Omega,\FF,\pp)$, we denote by $\LL_X$ the measure induced by $X$ on the measurable $(S,\BB(S))$. For a measurable space $(U,\mathcal{U})$, let $Pr(U)$ denote the space of all probability measures defined on $(U,\mathcal{U})$.

Moreover, if $S$ is a locally compact Polish space, we denote by $ M(S) $  the space of all Borel measures on $S$ and $M_{FC}(S)$ the set of all $\mu \in M(S)$ with $\mu (O)< +\infty$ for each compact subset $O \subseteq S$.  $M_{FC}(S)$ is equipped with the weakest topology, thus all mappings
$$
\mu \to \int_S f(s) \mu(\d s), \forall f \in C_c(S)
$$
are continuous. This topology is metrizable, so $M_{FC}(S)$ is a Polish space (see \cite{BDM11} for more details).

\subsubsection{Framework}
Throughout this paper, we fix $T>0$ as a constant. Let $\rr^d$ be equipped with the standard inner product $\langle \cdot, \cdot  \rangle$ and induced Euclidean norm $| \cdot|$. For matrices in the space $\rr^d \otimes \rr^d$, we denote by $\|\cdot\|_{\rr^d \otimes \rr^d}$  the Hilbert-Schmidt norm.

Let $Z$ be a locally compact Polish space equipped with a $\sigma$-finite measure $\nu \in M_{FC}(Z)$. Consider the filtered probability space $ (\Omega, \mathcal{F}, \{ \mathcal{F}_t\}_{t\in [0,T]},\pp)$ with
$$
\Omega:=C([0,T], \rr^d) \times M_{FC}( [0,T] \times Z \times \rr_+), \quad \mathcal{F}:=\BB (\Omega).
$$

We introduce the coordinate mappings
$$
\begin{aligned}
&W:\Omega \to C([0,T], \rr^d), \quad W(\alpha,\beta)(t) =\alpha(t), \ t \in [0,T],\\
&N:\Omega \to M_{FC}([0,T]\times Z \times \rr_+), \quad N(\alpha ,\beta)=\beta.
\end{aligned}
$$

For each $t \in [0,T]$, defined the $\sigma$-algebra
$$
\GG_t:=\sigma \left( \{ W_s,N((0,s)\times A): 0 \le s \le t, A \in \BB(Z \times \rr_+) \}\right).
$$
Given the measure $\nu$, by the result in \cite{IW81}, there exists a unique probability measure $\pp$ on $(\Omega,\FF)$ such that:
\benu
\item
$W$ is a $\rr^d$-cylindrical BM;
\item
$N$ is a PRM on $[0,T]\times Z \times \rr_+$ with intensity measure $Leb_T \otimes \nu \otimes Leb_{\infty}$, where  $Leb_T$ and $Leb_{\infty}$ stand for the Lebesgue measures on $[0,T]$ and $\rr_+$ respectively;
\item
$W$ and $N$ are independent.
\nenu

Denote by $\mathbb{F}:=\{\FF_t\}_{t\in [0,T]}$ the $\pp$-completion of $\{\GG_t\}_{t\in [0,T]}$  and $\mathcal{P}$ the $\mathbb{F}$-predictable $\sigma$-field on $[0,T] \times \Omega$. The cylindrical BM $W$ and the PRM $N$ will be defined on the (filtered) probability space $(\Omega,\mathcal{F},\mathbb{F},\pp)$. The corresponding compensated PRM will be denoted by $\tilde{N}$.

Denote
$$
\mathcal{R}_+= \{\varphi:[0,T]\times \Omega \times Z \to \rr_+: \varphi \mbox{ is } (\mathcal{P} \otimes \mathcal{B}(Z))/\mathcal{B}(\rr_+)\mbox{-measurable} \}.
$$
For any $\varphi \in \mathcal{R}_+,\ N^{\varphi}:\Omega \to M_{FC}([0,T]\times Z )$ is a counting process on $[0,T] \times Z$ defined by
$$
N^{\varphi((0,t]\times A)}=\int_{(0,t]\times A \times \rr_+} 1_{[0,\varphi(s,z)]}(r)N(\d s , \d z , \d r), 0 \le t \le T, A \in \mathcal{B}(Z).
$$
$N^{\varphi}$ can be viewed as a controlled random measure, with $\varphi$ selecting the intensity. Analogously, the compensated version $\tilde{N}^\varphi$ is defined by replacing $N$ with $\tilde{N}$. If $\varphi \equiv c>0$, we write $N^\varphi=N^c$ and $\tilde{N}^\varphi=\tilde{N}^c$.

\subsubsection{Energy-Constrained Spaces}
For each $f \in L^2([0,T], \rr^d)$, define
$$
Q_1(f):=\frac{1}{2} \int_0^T | f(s) |^2 \d s,
$$
and for each $m>0$, denote
$$
S_1^m=\left\{f \in L^2([0,T],\rr^d): Q_1(f) \le m  \right\}.
$$
Equipped with the weak topology, $S_1^m$ is a compact subset of $L^2([0,T],\rr^d)$.

For each measurable function $g:[0,T] \times Z \to [0, +\infty)$, define
$$
Q_2 (g) := \int_{[0,T] \times Z} l(g(s,z)) \nu(\d z)\d s,
$$
where $l(x)=x \log x -x +1$, $l(0):=1$. For each $m >0$, denote
$$
S_2^m := \left\{g:[0,T] \times Z \to [0,+\infty)| Q_2 (g) \le m \right\}.
$$
Any measurable function $g \in S_2^m$ can be identified with a measure $\hat{g} \in M_{FC}([0,T] \times Z)$, defined by
$$
\hat{g} (A) = \int_A g(s,z) \nu(\d z) \d s, \ \forall A \in \BB([0,T] \times Z).
$$
This identification induces a topology under which $S_2^m$ is a compact space.

Denote
$$
S := \bigcup_{m\in \nn} \left\{ S_1^m \times S_2^m \right\},
$$
and equip it with the usual product topology.

Let $\{ Z_n\}_{n \in \nn}$ be a sequence of compact sets satisfying that $Z_n \subseteq Z $ and  $Z_n \nearrow Z $. For each $n \in \nn$, let
$$
\mathcal{R}_{b,n} = \left\{ \psi \in \mathcal{R}_{+}: \psi(t,z,\omega) \in
\begin{cases}
[\frac{1}{n},n], \mbox{ if } z \in Z_n,\\
\{ 1\}, \mbox{ if } z \in Z_n^c,
\end{cases}
\mbox{ for all } (t,\omega) \in [0,T] \times \Omega \right\},
$$
and
$\mathcal{R}_b= \bigcup_{n=1}^{+\infty} \mathcal{R}_{b,n}$. For any $m \in (0, +\infty)$, let $ \SS_1^m$ and $\SS_2^m$ be two spaces of stochastic processes on $\Omega$ defined by
\begin{align*}
&\SS_1^m := \{\varphi: [0,T] \times \Omega \to \rr^d: \mathbb{F} \mbox{-predictable and } \varphi (\cdot, \omega) \in S_1^m \mbox{ for } \pp\mbox{-a.s.} \ \omega \in \Omega \},\\
&\SS_2^m := \{ \psi \in \mathcal{R}_b: \psi(\cdot, \cdot, \omega) \in S_2^m \mbox{ for } \pp \mbox{-a.s. } \omega \in \Omega \}.
\end{align*}

\subsubsection{Wasserstein distance}
Denote by $\PP(\rr^d)$ the collection of probability measures on $(\rr^d, \BB (\rr^d))$, and
$$
\PP_2:= \left\{  \mu \in \PP(\rr^d): \| \mu \|_2^2 :=\int_{\rr^d} |y|^2 \mu (\d y) < +\infty \right\}
$$
the space of probability measures with finite second moments.
Note that $ \PP_2 $ is a Polish space equipped with the Wasserstein distance
$$
W_2(\mu_1,\mu_2):= \inf_{\phi \in \mathscr{C} ( \mu_1, \mu_2)}\left( \int_{\rr^d \times \rr^d} |x-y|^2 \phi(\d x, \d y)
 \right)^{\frac{1}{2}},
$$
where $\mathscr{C} ( \mu_1, \mu_2) $ is the set of all couplings for any $ \mu_1, \mu_2\in \PP_2$.

\brmk\label{WXY}
By the definition, it is easy to see that for any $\rr^d$-valued random variables $X$ and $Y$,
\beq
W_2(\LL_X,\LL_Y)\le [\ee|X-Y|^2]^{1/2}.
\nneq
\nrmk

\subsection{Maximal Monotone Operator}

Let $2^{\rr^d}$ be the set of all the subsets of $\rr^d$,  $A$ is said to be a multivalued operator on $\rr^d$ if $A$ is an operator from $\rr^d$ to $2^{\rr^d}$. Let
$$
\begin{aligned}
&D(A):= \{ x \in \rr^d: A(x) \neq \emptyset \},\\
&Gr(A):= \{ (x,y) \in \rr^{2d}: x \in D(A), y \in A(x) \}.
\end{aligned}
$$

\bdef\label{defA}

\benu
\item
A multivalued operator $A$ is called monotone if
$$
\langle x_1-x_2, y_1 - y_2 \rangle \ge 0, \quad \forall (x_1,y_1),(x_2,y_2) \in Gr(A).
$$
\item
A monotone operator $A$ is called maximal if
$$
(x_1,y_1) \in Gr(A) \Leftrightarrow \langle x_1-x_2, y_1 - y_2 \rangle \ge 0, \quad \forall (x_2,y_2) \in Gr(A).
$$
\nenu
\ndef

A particular example of a multi-valued maximal monotone operator is the sub-differential of a proper, convex and  lower semi-continuous function $\varphi:\mathbb{R}^d\rightarrow(-\infty,+\infty]$, defined by
\begin{eqnarray*}
    \partial\varphi(x):=\Big\{x^*\in\mathbb{R}^d\mid\langle y-x,x^*\rangle+\varphi(x)\leq\varphi(y),\forall y\in\mathbb{R}^d\Big\}.
\end{eqnarray*}
In the one-dimensional case, every maximal monotone operator on $\mathbb{R}$ can be represented in this manner.

The following is an explicit example.
\begin{example}
Consider the indicator function of a closed convex set $\mathbb{K}\subseteq\mathbb{R}^d$,
\begin{eqnarray*}
    \text{I}_{\mathbb{K}}(x):=\Big\{\begin{array}{ll}
                            0,~~~~~~~~x\in\mathbb{K}; \\
                            +\infty,~~~~x\in\mathbb{R}^d\backslash\mathbb{K}.
                          \end{array}
\end{eqnarray*}
The sub-differential operator of $I_{\mathbb{K}}$ is given by
\begin{eqnarray*}
    \partial \text{I}_{\mathbb{K}}(x)=\Bigg\{\begin{array}{lll}
                                 {0},~~~~~~~~x\in \text{Int}(\mathbb{K}); \\
                                 \Pi_x,~~~~~~x\in \text{Fr}(\mathbb{K}); \\
                                 \emptyset,~~~~~~~~x\in\mathbb{R}^d\backslash\mathbb{K},
                               \end{array}
\end{eqnarray*}
where $\text{Fr}(\mathbb{K})$ denotes the frontier of the set $\mathbb{K}$ and $\Pi_x$ is the exterior normal cone which is defined with respect to $\mathbb{K}$ at $x$.
\end{example}

Given $T>0$, let
$$
V_0=\{ K \in C([0,T],\rr^d): K \mbox{ is of finite variation and } K_0=0\}.
$$

Set
$$
\AA:= \left\{  (X,K):X \in \DD([0,T],\overline{D(A)}), K \in V_0 \mbox{ and } \langle X_t -x, \d K_t -y \d t\rangle \ge 0, \forall (x,y) \in Gr(A)\right\}.
$$
We have the following characterization for the element in $\AA$ (cf. \cite{Cepa2,ZXC07}).

\bprop\label{equivalent}
Let $(X,K)$ be a pair of functions with $X \in \DD([0,T],\overline{D(A)})$ and $K \in V_0$. Then the following statement are equivalent:
\benu
\item
$(X,K) \in \AA;$

\item
For any $(x,y) \in \DD([0,1], \rr^d)$ with $(x_t,y_t) \in Gr(A)$, it holds that
$$
\langle X_t -x_t, \d K_t -y_t \d t \rangle \ge 0;
$$

\item
For any $(X',K') \in \AA$, it holds that
\beq\label{monotone}
\langle X_t -X'_t, \d K_t - \d K'_t \rangle \ge 0.
\nneq
\nenu
\nprop

\subsection{Solutions to multivalued McKean-Vlasov SDEs with jumps }
Given $T>0$.
For any $K \in V_0$ and $s \in [0,T]$, denote $|K|_0^s$ by the variation of $K$ on $[0,s]$.

\bdef\label{solution}(Strong solution)
A  pair of $(\FF_t)$-adapted processes $(X,K)$ is called a strong solution of \eqref{eq1} with the initial value $x$ if $(X,K)$ on a filtered probability space $(\Omega, \FF, \{\FF_t\}_{t\in [0,T]}, \pp)$ such that

\benu
\item
$$
\pp(X_0 = x_0)=1;
$$
\item
$$
(X_\cdot(\omega),K_\cdot(\omega)) \in \AA,\ \pp \mbox{-a.s.};
$$
\item
it holds that
$$
\pp \left\{ \int_0^T |b(X_s,\LL_{X_s})|+\|\sigma(X_s,\LL_{X_s})\|_{\rr^d \otimes \rr^d}^2+\int_Z |G(X_s,\LL_{X_s},z)|^2 \nu(\d z) \d s< +\infty \right\}=1
$$
and

\begin{align*}
X_t=&x_0-K_t+\int_0^t b(X_s,\LL_{X_s})\d s+\int_0^t \sigma (X_s,\LL_{X_s})\d W_s\\
&+ \int_0^t\int_{Z} G(X_{s-},\LL_{X_s},z)\tilde{N} (\d z,\d s),\ t\in[0,T],\ \pp \mbox{-a.s.}.
\end{align*}

\nenu
\ndef

%
%
%
%
%
%
%

\blem\label{lem24}
Suppose that $Int (D(A)) \neq \emptyset $. Then for any $ a \in Int(Dom(A))$, there exist two positive constants $r$ and $\mu $ such that for any pair $(X,K)$ satisfying Definition \ref{solution},
$$
\int_s^t \langle X_v -a , \d K_v \rangle \ge r |K|_t^s - \mu \int_s^t |X(v)-a| \d v -r \mu(t-s),
$$
where $|K|_t^s$ denotes the total variation of $K$ on $[s,t]$.
\nlem

\bdef(Weak solution)
We say that equation (\ref{eq1}) admits a weak solution with initial law $\mathcal{L}_{X_0}\in\mathcal{P}(\mathbb{R}^d)$, if there exists a stochastic basis $\mathcal{S}:=(\Omega,\mathcal{F},\{\mathcal{F}_t\}_{t\geq0},\mathbb{P})$, a $d$-dimensional standard $\mathcal{F}_t$-Brownian motion $(W_t)_{t\geq0}$, a compensated Poisson measure $\widetilde{N}$ as well as a pair of $\mathcal{F}_t$-adapted processes $(X,K)$ defined on $\mathcal{S}$ such that\\
(i) $X_0$ has the law $\mathcal{L}_{X_0}$ and $(X_.(\omega),K_.(\omega))\in\mathcal{A}$ for $\mathbb{P}$-almost all $\omega\in\Omega$;\\
(ii) it holds that \begin{eqnarray*}
\int_0^T\left(|b(X_t,\mathcal{L}_{X_t})|+\|\sigma(X_t,\mathcal{L}_{X_t})\|_{\rr^d \otimes \rr^d}^2+\int_{Z}|G(X_{t-},\mathcal{L}_{X_t},z)|^2\nu(\d z)\right)\d t<+\infty
\end{eqnarray*}
and
\begin{eqnarray*}
    X_t=x_0-K_t+\int_0^t b(X_s,\mathcal{L}_{X_s})\d s+\int_0^t \sigma(X_s,\mathcal{L}_{X_s})\d W_s \\
    ~~~~+\int_0^t \int_{Z} G(X_{s-},\mathcal{L}_{X_s},z)\widetilde{N}(\d s,\d z),~ t\in[0,T].
\end{eqnarray*}

Such solution will be denote by $(\SS;W,\tilde{N},(X,K))$.
\ndef

\bdef\label{lawuni}
(Uniqueness in law) Let $ (\SS;W,\tilde{N},(X,K)) $ and $ (\SS';W',\tilde{N}',(X',K')) $ be two weak solutions with the same initial distribution $ \mathcal{L}_{X_{0}}=\mathcal{L}_{X'_{0}} $. The uniqueness in law is said to hold for \eqref{eq1} if $ (X,K) $ and $ (X',K') $ have the same law.
\ndef
	
\bdef\label{pathuni}
(Pathwise Uniqueness) Let $ (\SS;W,\tilde{N},(X,K)) $ and $ (\SS;W,\tilde{N},(X',K')) $ be two weak solutions with the same initial distribution. The pathwise uniqueness is said to hold for \eqref{eq1} if for all $ t\in[0,T] $, $ (X_{t},K_{t})=(X'_{t},K'_{t}) $.
\ndef

\subsection{Large deviation principle}
We first recall the definitions of a rate function and LDP. Let $\EE$ be a Polish space with the Borel $\sigma$-field $\BB(\EE)$.

\bdef (Rate function)
A function $I: \EE \to [0,+\infty]$ is called a rate function on $\EE$, if for each $M< +\infty$, the level set $\{x \in \EE: I(x) \le M\}$ is a compact subset of $\EE$.
\ndef

\bdef (LDP)
Let $I$  be a rate function on $\EE$. Given a collection $\{ h_\e \}_{\e >0}$ of positive reals, a family $\{ X^\e \}_{\e>0}$ of $\EE$-valued random elements is said to satisfy a LDP on $\EE$ with speed $h_{\e}$ and rate function $I$ if the following two claims hold:
\benu
\item[(a)](Upper bound)
For each closed subset $C$ of $\EE$,
$$
\limsup_{\e \to 0} h_\e \log P(X^\e \in C) \le - \inf_{x\in C} I(x);
$$
\item[(b)](Lower bound) For each open subset $O$ of $\EE$,
$$
\liminf_{\e \to 0} h_\e \log P(X^\e \in O) \ge - \inf_{x\in O} I(x).
$$
\nenu
\ndef

\subsection{Bihari's inequality}	

The following lemma  will be used in the proofs.

\begin{lemma}\label{Bihari}
		(Bihari's inequality \cite{M08})  Let $\varrho: \rr_+ \to \rr_+$ be a continuous nondecreasing function such that $\varrho(t)>0$ for all $t>0$. Let $g(\cdot)$ be a Borel measurable bounded nonnegative function on $[0,T]$. Let $q(\cdot)$ be a nonnegative integrable function on $[0,T]$. If
$$
g(t)\leq C+\int_{0}^{t}q(s)\varrho(g(s))\mathrm{d}s, \quad t\in [0,T]
$$
where $C>0$ is a constant, then
$$
g(t)\leq f^{-1} \left( f(C)+\int_0^t q(s) \d s\right)
$$
holds for all $t \in [0,T]$ such that
$$
f(C)+\int_0^t q(s) \d s \in Dom(f^{-1}),
$$
where $f(r)=\int_1^r \frac{1}{ \varrho(s)} \d s$ and $f^{-1}$ is the inverse function of $f$.
\end{lemma}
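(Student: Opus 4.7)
The plan is to follow the standard comparison argument for Gronwall-type inequalities. Define the majorant
\[
G(t) := C + \int_0^t q(s)\,\varrho(g(s))\,\d s, \qquad t \in [0,T].
\]
By construction $g(t) \le G(t)$ for every $t$, the function $G$ is absolutely continuous on $[0,T]$ with $G(0)=C>0$, and since $g$ is bounded and $\varrho$ is continuous, $G$ is bounded on $[0,T]$, say $G(t)\in [C, C+M]$. Because $\varrho$ is nondecreasing and $g(s)\le G(s)$, one obtains the differential inequality
\[
G'(t) \;=\; q(t)\,\varrho(g(t)) \;\le\; q(t)\,\varrho(G(t)) \qquad \text{for a.e.\ } t\in[0,T].
\]

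Next, since $C>0$ and $\varrho>0$ on $(0,\infty)$, the function $1/\varrho$ is bounded on the compact interval $[C, C+M]$, so $f(r)=\int_1^r \d s/\varrho(s)$ is Lipschitz on this interval. Hence $f\circ G$ is absolutely continuous and the chain rule gives
\[
\frac{\d}{\d t} f(G(t)) \;=\; \frac{G'(t)}{\varrho(G(t))} \;\le\; q(t) \qquad \text{for a.e.\ } t\in[0,T].
\]
Integrating over $[0,t]$ and using $G(0)=C$ yields
\[
f(G(t)) \;\le\; f(C) + \int_0^t q(s)\,\d s.
\]

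Finally, $f'=1/\varrho>0$ on $(0,\infty)$, so $f$ is strictly increasing and thus admits an inverse $f^{-1}$ on its range. Provided $f(C)+\int_0^t q(s)\,\d s$ lies in $\mathrm{Dom}(f^{-1})$, applying $f^{-1}$ to both sides of the last display gives
\[
G(t) \;\le\; f^{-1}\!\left( f(C) + \int_0^t q(s)\,\d s \right),
\]
and since $g(t)\le G(t)$ the conclusion follows.

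The only delicate point is the use of the chain rule for the absolutely continuous composition $f\circ G$; this is where the positivity of $C$ is crucial, because it keeps $G$ bounded away from $0$ so that $1/\varrho(G(\cdot))$ stays bounded and the change of variables is legitimate. Everything else is routine bookkeeping.
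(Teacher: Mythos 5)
Your proof is correct and follows the standard textbook argument (majorant $G$, differential inequality via monotonicity of $\varrho$, chain rule for the absolutely continuous composition $f\circ G$, integration, and inversion of the strictly increasing $f$). The paper itself does not prove this lemma but merely cites Mao \cite{M08}, where essentially the same argument appears, so there is nothing to reconcile.
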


\section{Large and Moderate deviation principles}

In this section, we consider the following perturbed equation of \eqref{eq1},

\beq\label{eq2'}
\begin{cases}
\d X_t^\e \in -A(X_t^\e) \d t +b_\e(X_t^\e,\LL_{X_t^\e})\d t+\sqrt{\e}\sigma_\e (X_t^\e,\LL_{X_t^\e})\d W_t\\
\quad \quad \quad+ \e \int_{Z} G_\e (X^\e_{t-},\LL_{X_t^\e},z)\tilde{N}^{\e^{-1}} (\d z,\d t),\ t\in[0,T], \ \e \in (0,1], \\
X_0^\e=x_0 \in \overline{D(A)},
\end{cases}
\nneq
where
$$
b_\e:  \overline{D(A)} \times \PP_2 \to \rr^d, \quad \sigma_\e: \overline{D(A)} \times  \PP_2 \to \rr^d \otimes \rr^d
$$
and
$$
G_\e : \overline{D(A)} \times \PP_2 \times Z \to \rr^d
$$
are measurable maps.

For any given probability measure $\mu$,
\begin{eqnarray*}
    b_\e(x,\mu):=\int_{\mathbb{R}^d} \tilde{b}_\e(x,y)\mu(dy),~~\sigma_\e (x,\mu):=\int_{\mathbb{R}^d} \tilde{\sigma}_\e(x,y)\mu(dy),\\
    \int_{Z}G_\e(x,\mu,z)\nu(dz):=\int_{Z}\int_{\mathbb{R}^d} \tilde{G}_\e(x,y,z)\mu(dy)\nu(dz),
\end{eqnarray*}
where $\tilde{b}_\e: \mathbb{R}^d\times\mathbb{R}^d\rightarrow\mathbb{R}^d$, $\tilde{\sigma}_\e: \mathbb{R}^d\times\mathbb{R}^d\rightarrow\mathbb{R}^d\otimes \rr^d$ and $\tilde{G}_\e: \mathbb{R}^d\times\mathbb{R}^d\times Z \rightarrow\mathbb{R}^d$ are all continuous functions.

\subsection{Large deviation principle}

%
%

The aim of this section is to establish the large deviation principle for the
the solutions $\{ X^\e, \e \in (0,1]\}$ to \eqref{eq2'} as $ \e $ decreases to $0$. We first present the assumptions.

\bhyp\label{H11}
There exists $L>0$, for all $x,x',y \in \rr^d$ and $\mu,\mu' \in \PP_2$, such that
\benu
\item[{\bf (H1)}]
A is a maximal monotone operator and ${\rm Int} D(A) \neq \emptyset$.
\item[{\bf (H2)}]
The functions $b$, $\sigma$ and $G$ satisfy the following conditions:
\begin{align*}
&\langle  x-x',b(x,\mu)-b(x',\mu') \rangle  \le \kappa\left( |x-x'|^2\right) +\kappa \left( W_2^2(\mu, \mu') \right),\\
&\| \sigma(x,\mu) - \sigma (x',\mu')\|_{\rr^d \otimes \rr^d}^2 \vee \int_Z | G(x,\mu,z) -G(x',\mu',z)|^2 \nu (\d z)\\
\le& \kappa \left(|x-x'|^2\right)  +  \kappa \left(W_2^2(\mu, \mu')\right),\\
\end{align*}
where $\kappa: \rr^+ \to \rr^+$ is a continuous and non-decreasing concave function with $ \kappa(0)=0,\ \kappa(u)>0$ for every $u>0$ such that $\int_{0^+} \frac{1}{\kappa(u)} \d u =+\infty$.
\item[{\bf (H3)}]
The functions $\tilde{b}$, $\tilde{\sigma}$ and  $\tilde{G}$ are continuous in $(x,y)$ and satisfy the linear growth condition:
\begin{align}\label{H31}
|\tilde{b}(x,y)|^2 \vee \|\tilde{\sigma}(x,y)\|^2_{\rr^d \otimes \rr^d} \vee \int_{Z} |\tilde{G}(x,y,z)|^2 \nu(\d z) \le L(1+|x|^2+|y|^2).
\end{align}
\item[{\bf (H4)}]
For every $x \in \overline{D(A)}$, $x+\tilde{G}(x,y,z) \in \overline{D(A)}$.
\nenu
\nhyp
Under {\bf Hypothesis \ref{H11}}, we have proved in \cite{CGLZ24} that equation \eqref{eq1} has a unique strong solution $ \left( X, K\right) $.

\brmk
It is obvious that  {\bf (H3)} implies the following statement: for all $(x,\mu)\in\mathbb{R}^d\times\mathcal{P}_2$,
\benu
\item[{\bf (H3)'}]
$b$, $\sigma$ and $G$ are continuous in $(x,\mu)$ and satisfy
\begin{align}\label{blinear}
    |b(x,\mu)| \vee \|\sigma(x,\mu)\|_{\rr^d \otimes \rr^d}^2 \vee \int_{Z}|G(x,\mu,z)|^2\nu(\d z)\leq L\left(1+|x|^2+\|\mu\|_2^2\right).
\end{align}
\nenu

Indeed,  {\bf (H3)'} is sufficient for using weak convergence method to prove the LDP and MDP. However, in our another paper \cite{CGLZ24}, the stronger assumption {\bf (H3)} is required to guarantee the existence and uniqueness of a strong solution to the stochastic differential equation \eqref{eq1}.
\nrmk

To establish the large deviation principle (LDP) and ensure the existence and uniqueness of strong solutions for equation \eqref{eq2'}, we need the following notations and assumptions.

Set
$$
L^2(\nu) = \{ f :Z \to \rr|f \mbox{ is } \BB(Z)/\BB(\rr ) \mbox{-measurable and } \int_Z |f(z)|^2 \nu (\d z) < +\infty  \}
$$
and
\begin{align}
\mathcal{H} = \Big\{g: Z \to \rr_+ |&g \mbox{ is Borel measurable and there exists } c>0 \mbox{ such that } \nonumber\\
 &\int_O e^{cg^2(z)} \nu (\d z) < +\infty \mbox{ for all } O \in \BB(Z) \mbox{ with } \nu(O) <+\infty \Big\}.
\end{align}

\bhyp\label{H22}
\benu
\item[{\bf (H5)}]
As $\e \to 0$, the maps $b_\e$ and $ \sigma_\e $ converge uniformly to $b$ and $\sigma$ respectively, i.e., there exist some  nonnegative constants $\rho_{b,\e}$ and $\rho_{\sigma,\e }$ converging to $0$ as $\e \to 0$ such that
\begin{align*}
&\sup_{(x,\mu) \in \rr^d \times \PP_2}  |b_\e(x,\mu) -b(x,\mu)| \le \rho_{b,\e},\\
&\sup_{(x,\mu) \in \rr^d \times \PP_2}  \|\sigma_\e(x,\mu) -\sigma(x,\mu)\|_{\rr^d \otimes \rr^d} \le \rho_{\sigma,\e}.
\end{align*}
\item[{\bf (H6)}]
There exist $L_1,L_2,L_3 \in \HH \cap L^2(\nu)$ such that for all $ t \in [0,T]$, $x,x' \in \rr^d$, $\mu, \mu' \in \PP_2 $ and $z \in Z$,
$$
\begin{aligned}
&| G(x,\mu,z) -G(x',\mu',z) |^2 \le L_1^2(z)\left( \kappa \left(|x-x'|^2\right) +\kappa \left( W_2^2(\mu,\mu') \right) \right),\\
&|G(0,\delta_0,z)| \le L_2(z)
\end{aligned}
$$
and there exists nonnegative constant $\rho_{G,\e}$ converging to $0$ as $\e \to 0$ such that
$$
\sup_{(x,\mu)\in \rr^d \times \PP_2} | G_\e(x, \mu, z)-G(x,\mu,z)| \le \rho_{G,\e} L_3(z).
$$
\item[{\bf (H7)}]
The function $\tilde{G}_\e$ is continuous in $(x,y)$ and satisfy the linear growth condition, i.e., for some constant $L>0$ and for any $\e \in (0,1]$,
\begin{align}
\int_{Z} |\tilde{G}_\e(x,y,z)|^2 \nu(\d z) \le L(1+|x|^2+|y|^2)
\end{align}
and $x+\tilde{G}_\e(x,y,z) \in \overline{D(A)}$, $\forall z \in Z,\ y \in \rr^d$.
\nenu
\nhyp

\brmk
By {\bf (H3)} and {\bf (H5)}, for some constant $L>0$, we have the following condition:
\benu
\item[{\bf (H8)}]
for any $\e \in (0,1]$, the functions $\tilde{b}_\e$ and $\tilde{\sigma}_\e$ are continuous in $(x,y)$ and satisfy the linear growth condition
\begin{align}
|\tilde{b}_\e(x,y)|^2 \vee \|\tilde{\sigma}_\e(x,y)\|^2_{\rr^d \otimes \rr^d}  \le L(1+|x|^2+|y|^2).
\end{align}
\nenu
\nrmk

Although the value of $L$ may be different in each hypothesis, we use the same notation $L$ throughout this paper for convenience.

\brmk
Under hypotheses {\bf (H2), (H5), (H6)}, for any fixed $\e \in (0,1]$, it can be directly verified that $b_\e,\ \sigma_\e$ and $G_\e$ inherit the required conditions prescribed in {\bf (H2)}.
\nrmk

Hence, by {\bf Hypothesis \ref{H11}} and {\bf Hypothesis \ref{H22}}, applying the theorem in \cite{CGLZ24}, we can obtain that for any fixed $\e \in (0,1]$, equation \eqref{eq2'} admits a unique strong solution. Denote the solution by $ \left( X^{\e}, K^{\e}\right) $.
Moreover, by the classical Yamada-Watanabe theorem, there exists a measurable function $\GG^\e$ such that
$$
X^\e=\GG^\e (\sqrt{\e }W,\e N^{\e^{-1}}  ).
$$

By \cite[Theorem 2.10]{CGLZ24}, we can easily obtain the following result by taking the diffusion term and jump term as zero.

\bprop
Assume that {\bf (H1) and (H2)} hold, then there exists a unique pair of $(X^0, K^0)$ satisfying that
\benu
\item
$X^0 \in C([0,T], \overline{D(A)})$,
\item
$\int_0^T |b(X_s^0,\LL_{X_s^0}) | \d s < +\infty,\ (X_t^0,K_t^0) \in \AA,\ \forall t \in [0,T]$,
\item
\beq\label{eq0}
X_t^0=x_0 + \int_0^t b(X_s^0, \LL_{X_s^0}) \d s -K_t^0, \quad \forall t \in [0,T].
\nneq
\nenu
\nprop

\brmk Note that $X^0$ is a deterministic path, and $\LL_{X_s^0}=\delta_{X_s^0}$ for any $s\in [0,T]$.\nrmk


Since when perturbing the BM and PRM of the mapping $\GG_\e (\cdot,\cdot) $, $\mu^\e$ the distribution in the coefficients is already deterministic and hence it is not affected by the perturbation. We use the method in \cite{LSZZ} to deal with this technical difficulty. So we have the following two lemmas.

The first one is stated in \cite[Theorem 3.8]{LSZZ}
\blem\label{Lemma1} Assume that the following assumptions hold.

{\bf (A0)}: For any fixed $\e >0$ and $\LL_{X^\e}$, the maps $b_\e(\cdot,\LL_{X^\e}):\rr^d \to \rr^d $, $\sigma_\e(\cdot,\LL_{X^\e}): \rr^d \to \rr^d \otimes \rr^d$ and $ G_\e(\cdot, \LL_{X^\e},\cdot): \rr^d \times Z \to \rr^d $ are measurable maps.

{\bf (A1)}: {\bf Hypothesis \ref{H11}} and {\bf Hypothesis \ref{H22}} hold.

Then
\eqref{eq2} has a unique solution $(X^\e, K^\e) $ as stated in Definition \ref{solution} with initial value $X_0^\e=x_0 \in \overline{D(A)} $ and  $K_0^\e=0 $.

Moreover, we have

\benu
\item[(1)]
there exists a map $\Gamma_{\LL_{X^\e}}$ such that
$$
X^\e = \Gamma^\e_{\LL_{X^\e}} \left(\sqrt{\e} W_\cdot, \e N^{\e^{-1}}\right);
$$

\item[(2)]
for any $m \in (0,+\infty)$, $ u_\e = (\phi_\e, \psi_\e) \in \SS_1^m \times \SS_2^m$, let
$$
Z^{\e,u_\e} := \Gamma^\e_{\LL_{X^\e}} \left(\sqrt{\e} W_\cdot + \int_0^\cdot \phi_\e(s) \d s , \e N^{\e^{-1} \psi_\e}\right),
$$
then $\{Z^{\e,u_\e},K^{\e,u_{\e}}\}$ is the unique solution of the equation

\begin{align}
 Z_t^{\e,u_\e}=&x_0 + \int_0^t b_\e(Z_s^{\e,u_\e},\LL_{X^\e})\d s+\sqrt{\e} \int_0^t \sigma_\e (Z_s^{\e,u_\e},\LL_{X^\e})\d W_s \nonumber \\
&+  \int_0^t \sigma_\e (Z_s^{\e,u_\e},\LL_{X^\e}) \phi_\e(s)\d s+ \e \int_0^t \int_{Z} G_\e (Z_{s-}^{\e,u_\e},\LL_{X^\e},z)\tilde{N}^{\e^{-1} \psi_\e } (\d z,\d s)\nonumber \\
&+ \int_0^t \int_Z G_\e(Z_s^{\e,u_\e},\LL_{X^\e},z) \left( \psi_\e(s,z) -1 \right) \nu(dz) ds-K_t^{\e,u_\e},\ t\in[0,T],  \ \pp \mbox{-a.s.}
\end{align}
and

\begin{align}
&\int_0^T | b(Z_s^{\e,u_\e}, \LL_{X^\e})| \d s + \int_0^T \| \sigma(Z_s^{\e,u_\e}, \LL_{X^\e} )\|_{\rr^d \otimes \rr^d}^2 \d s \nonumber\\
&+\int_0^T | \sigma ( Z_s^{\e,u_\e}, \LL_{X^\e} ) \phi_\e(s)| \d s + \int_0^T \int_Z |G(Z_s^{\e,u_\e}, \LL_{X^\e},z)|^2 \psi_\e(s,z) \nu(\d z) \d s \nonumber\\
&+  \int_0^T \int_Z |G(Z_s^{\e,u_\e}, \LL_{X^\e},z)( \psi_\e(s,z) -1)| \nu(\d z) \d s+|K_t^{u_\e}|_0^T < +\infty, \pp \mbox{-a.s.}
\end{align}
and
$$
Z_t^{\e,u_\e} \mbox{ is } \FF_t \mbox{-adapted. }
$$

\nenu

\nlem

%
%
%

\blem\label{lemmaYu}
Assume that {\bf Hypothesis \ref{H11}} and {\bf (H6)} hold. Then for any $ u =(\phi,\psi) \in S $, there exists a unique solution $(Y^u, K^u)$,  $ Y^u=\{ Y^u_t\}_{t \in [0,T]} \in \DD([0,T], \overline{D(A)})$ to the following equation
\begin{align}\label{Yu}
Y^u_t =&x_0+ \int_0^t b(Y_s^u, \LL_{X_s^0}) \d s + \int_0^t \sigma (Y_s^u, \LL_{X_s^0}) \phi(s) \d s \nonumber\\
&+ \int_0^t \int_Z G(Y_s^u, \LL_{X_s^0},z)(\psi (s,z) -1) \nu (\d z) \d s -K_t^u, \ t \in [0,T].
\end{align}
Moreover, for any $ m>0$,
$$
\sup_{u=(\phi, \psi) \in S_1^m \times S_2^m } \sup_{t \in [0,T]} |Y_t^u | < + \infty.
$$
\nlem

\bprf
By the similar proof in Proposition 5.5 in \cite{LSZZ}, we can get the result. So we omit the tedious proofs here.
\nprf

We now state the main result in this subsection.

\bthm\label{LDP}
Assume that {\bf Hypothesis \ref{H11}} and {\bf Hypothesis \ref{H22}} hold.  Then $ \{ X_t^\e, \e \in (0,1], t\in [0,T] \}$ satisfy a LDP on $\DD([0,T], \overline{D(A)})$ with speed $\e $ and the rate function $I$ given by
$$
I(g) :=  \inf_{(\phi,\psi) \in S, g=Y^u}\{ Q_1(\phi) + Q_2(\psi)\},
$$
where
$$
Q_1(f) := \frac{1}{2} \int_0^T | f(s)|^2 \d s,\quad Q_2(g) =\frac{1}{2} \int_{[0,T] \times Z} \ell(g(s,z)) \nu(\d z) \d s,
$$
for $u=(\phi,\psi) \in S $, $Y^u$ is the unique solution of \eqref{Yu}. Here we use the convention that $ \inf \emptyset = +\infty$.
\nthm

\bprf
By Lemma \ref{lemmaYu}, we can define a map
$$
\Gamma^0:S \ni u = (\phi,\psi) \mapsto Y^u \in \DD ([0,T], \overline{D(A)}).
$$

For any $\e \in (0,1], m \in (0,+\infty)$ and $ u_\e=(\phi_\e, \psi_\e) \in \SS_1^m \times \SS_2^m$, consider the following controlled equation
\beq\label{controlledeq}
\begin{cases}
\d Z_t^{\e,u_\e}\in -A(Z_t^{\e,u_\e})\d t+ b_\e(Z_t^{\e,u_\e},\LL_{X_t^\e})\d t+\sqrt{\e}  \sigma_\e (Z_t^{\e,u_\e},\LL_{X_t^\e})\d W_t\\
\quad \quad \quad+  \sigma_\e (Z_t^{\e,u_\e},\LL_{X_t^\e}) \phi_\e(t)\d t+ \e  \int_{Z} G_\e (Z_{t-}^{\e,u_\e},\LL_{X_t^\e},z)\tilde{N}^{\e^{-1} \psi_\e } (\d z,\d t)\\
\quad \quad \quad+  \int_Z G_\e(Z_t^{\e,u_\e},\LL_{X_t^\e},z) \left( \psi_\e(t,z) -1 \right) \nu(dz) dt,\ t\in[0,T]; \\
Z_0^{\e,u_\e} =x_0 \in \overline{D(A)}.
\end{cases}
\nneq

By Lemma \ref{Lemma1} and the Girsanov's theorem, \eqref{controlledeq} admits a unique solution $(Z_t^{\e,u_\e}, K_t^{\e,u_\e})$ and $X^\e$ is the solution of \eqref{eq2}.

By the weak convergence method, it is sufficient to verify the following two claims:

\benu
\item[{\bf (LDP1)}]
For any given $ m \in (0,+\infty)$, let $u_n=(\phi_n, \psi_n), n \in \nn, u =(\phi,\psi) \in S_1^m \times S_2^m$ such that $u_n \to u $ in $ S_1^m \times S_2^m $ as $n \to +\infty$. Then
\beq\label{LDP1}
\lim_{n \to +\infty} \sup_{t \in [0,T]} | \Gamma^0(u_n)(t)- \Gamma^0(u)(t)|=0.
\nneq
\item[{\bf (LDP2)}]
For any given $m \in (0,+\infty)$, let $\{u_\e=(\phi_\e, \psi_\e), \e \in (0,1] \} \subset  \SS_1^m \times \SS_2^m$. Then
\beq\label{LDP2}
\lim_{\e \to 0} \ee \left( \sup_{t \in [0,T]} |Z_t^{\e,u_\e} -\Gamma^0(u_\e)(t) |^2 \right)=0.
\nneq

\nenu

\nprf

The verifications of {\bf (LDP1)} and {\bf (LDP2)} will be given in Section 4.1.

\subsection{Moderate deviation principle}

Lemma \ref{Lemma1} can also be used to establish MDP of $X^\e$ as $\e \to 0$.

Assume that $\lambda (\e)>0$, $\e>0$ satisfy
\beq\label{Lambda}
\lambda(\e) \to 0 \mbox{ and } \frac{\e}{\lambda^2 (\e)} \to 0 \mbox{ as } \e \to 0.
\nneq

Define
\begin{align*}
M^{\e}_t:=\frac{1}{\lambda(\e)}(X_t^\e-X_t^0), \quad t\in[0,T],
\end{align*}
where $X^0_t$ solves equation (\ref{eq00}), i.e.,
\beq
\begin{cases}
\d X_t^0 \in -A(X_t^0) \d t +b(X_t^0,\LL_{X_t^0})\d t,\ t\in[0,T], \\
X_0^0=x_0 \in \overline{D(A)}.
\end{cases}
\nneq

Then we consider the following multivalued SDE with jumps
\begin{align}\label{eqmdp}
\begin{cases}			\,\mathrm{d}M^{\e}_t\in\,-A(M^{\e}_t)\mathrm{d}t+\frac{1}{\lambda(\e)}\left(b_{\e}(X^\e_t,\mathcal{L}_{X_t^{\e}})
-b(X_t^0,\mathcal{L}_{X_t^0})\right)\mathrm{d}t+\frac{\sqrt{\e}}{\lambda(\e)}
\sigma_{\e}(X_t^\e,\mathcal{L}_{X_t^{\e}})\mathrm{d}W_t \\
\,
\quad\quad\quad +\frac{\e}{ \lambda(\e)} \int_Z G_\e(X_{t-}^\e, \LL_{X_t^\e},z)\tilde{N}^{\e^{-1}}(\d t, \d z),
\\
\,
M^{\e}_0=0.
\end{cases}
\end{align}

Under {\bf Hypothesis \ref{H11}} and {\bf Hypothesis \ref{H22}}, \eqref{eqmdp} has a unique strong solution (see \cite{CGLZ24}). Denote the solution by$(M^{\e}_t,\hat{K}^\e)$.

By Definition \ref{solution}, $(M^{\e}_t,\hat{K}^\e)$ is the unique solution to the following equation

\begin{align}\label{mep}
\begin{cases}
\mathrm{d}M^{\e}_t=&\frac{1}{\lambda(\e)}\left(b_{\e}(\lambda(\e)M^{\e}_t+X^0_t,\mathcal{L}_{X_t^{\e}})-b(X_t^0,\mathcal{L}_{X_t^0})\right)\mathrm{d}t   \\		&+\frac{\sqrt{\e}}{\lambda(\e)}\sigma_{\e}(\lambda(\e)M^{\e}_t+X_t^0,\mathcal{L}_{X_t^{\e}})\mathrm{d}W_t-\mathrm{d}\hat{K}^\e_t\\
&+\frac{\e}{ \lambda(\e)} \int_Z G_\e(X_{t-}^\e, \LL_{X_t^\e},z)\tilde{N}^{\e^{-1}}(\d t, \d z),\\
M^{\e}_0=0.
\end{cases}
\end{align}

Denote
$$
\mathcal{R}:=\{ \varphi:[0,T]\times \Omega \times Z \to \rr: \varphi \mbox{ is } (\mathcal{P} \otimes \mathcal{B}(Z))/\mathcal{B}(\rr)\mbox{-measurable} \}.
$$
For any given $ \e >0$ and $m \in (0,+\infty)$, denote
\begin{align*}
&S_{+,\e}^m := \{g:[0,T]\times Z \to [0,+\infty) | Q_2(g) \le m\lambda^2(\e) \},\\
&S_{\e}^m := \{\varphi:[0,T]\times Z \to \rr | \varphi=(g-1)/\lambda(\e), g \in S_{+,\e}^m  \},\\
&\mathcal{S}_{+,\e}^m := \{g \in \mathcal{R}_b|g(\cdot,\cdot,\omega) \in S_{+,\e}^m, \mbox{ for } P \mbox{-a.e. } \omega \in \Omega  \},\\
&\mathcal{S}_{\e}^m := \{\varphi \in \mathcal{R}|\varphi(\cdot,\cdot,\omega) \in S_{\e}^m, \mbox{ for } P \mbox{-a.e. } \omega \in \Omega \}.
\end{align*}
Denote $L_2(\nu_T)$ the space of all $ \mathcal{B}([0,T]) \otimes \mathcal{B}(Z)/\mathcal{B}(\rr)$ measurable functions $f$ satisfying that
$$
\interleave f \interleave_2^2:= \int_0^T \int_Z |f(s,z)|^2 \nu(\d z) \d s < +\infty.
$$
Then $(L_2(\nu_T),\interleave \cdot \interleave_2)$ is a Hilbert space. Denote by $B_2(r)$ the ball of radius $r$ centered at $0$ in $ L_2(\nu_T)$. Throughout this paper, $B_2(r)$ is equipped with the weak topology of $L_2(\nu_T)$ and therefore compact.
Suppose $g \in S_{+,\e}^m$. By Lemma 3.2 in \cite{BDG16}, there exists a constant $\kappa_2(1) >0$ (independent of $\e$) such that $ \varphi 1_{\{|\varphi| \le 1/\lambda(\e)\}} \in B_2( \sqrt{m \kappa_2(1)})$, where $ \varphi=(g-1)/\lambda(\e)$.

Let
\begin{align*}	\Upsilon^\e_{\mathcal{L}_{X^\e}}(\cdot):=\frac{1}{\lambda(\e)}\left(\Gamma^\e_{\mathcal{L}_{X^\e}}(\cdot)-X^0\right),
\end{align*}
then
\begin{enumerate}
	\item[(a)] $\Upsilon^\e_{\mathcal{L}_{X^\e}}$ is a measurable map from $C([0,T],\mathbb{R}^d)\times L_2(\nu_T)\mapsto \DD([0,T],\overline{D(A)}) $ such that
$$
M^\e=\Upsilon^\e_{\mathcal{L}_{X^\e}}\left(\sqrt{\e}W_\cdot,\e N^{\e^{-1}} \right);
$$
\item[(b)] for any $m \in (0,+\infty)$, $ u_\e = (\phi_\e, \psi_\e) \in \SS_1^m \times \SS_2^m$, let
$$
M^{\e,u_\e} := \Upsilon^\e_{\LL_{X^\e}}(\sqrt{\e} W_\cdot + \int_0^\cdot \phi_\e(s) \d s , \e N^{\e^{-1} \psi_\e}).
$$
\end{enumerate}
Since we aim to establish the MDP for $ X^\e$, it is equivalent to prove that $M^\e $ satisfies a LDP. Denote $\nabla b(x,\mu)$ as the derivative of $b(x,\mu)$ with respect to the first variable.
We need the following assumptions:
	
{\bf (C0)}
There exist $L',q'\geq 0$ such that for all $x,x'\in\mathbb{R}^{d}$,
\begin{align}\label{b0}
\|\nabla b(x,\mathcal{L}_{X^0_s})-\nabla b(x',\mathcal{L}_{X^0_s})\|_{\rr^d \otimes \rr^d}^2\leq L'(1+|x|^{q'}+|x'|^{q'})|x-x'|.
\end{align}
	
{\bf (C1)}
	\begin{align}\label{b1}
		\int_{0}^{T}\|\nabla b(X^0_t,\mathcal{L}_{X^0_t})\|_{\rr^d \otimes \rr^d}^2\mathrm{d}t<+\infty.
	\end{align}
	
{\bf (C2)}
	
	\begin{equation}\label{b2}
		\begin{aligned}
			\lim\limits_{\e\to 0}\frac{\rho_{b,\e}}{\lambda(\e)}=0,
		\end{aligned}
	\end{equation}
where $\rho_{b,\e}$  is given in {\bf (H5)}.

To overcome technical difficulties, we need to strengthen {\bf (H2)} and {\bf (H6)} to the following assumptions.
\benu
\item[{\bf (H2)'}]
The functions $b$, $\sigma$ and $G$ are continuous in $(x,\mu)$, and
\begin{align*}
&\langle  x-x',b(x,\mu)-b(x',\mu) \rangle \le L  |x-x'|^2,\\
& |b(x,\mu)-b(x,\mu')| \le LW_2(\mu, \mu'),\\
&\| \sigma(x,\mu) - \sigma (x',\mu')\|_{\rr^d \otimes \rr^d}^2 \vee \int_Z | G(x,\mu,z) -G(x',\mu',z)|^2 \nu (\d z) \le L \left(|x-x'|^2+W_2^2(\mu, \mu')\right).
\end{align*}
\item[{\bf (H6)'}]
There exist $L_1,L_2,L_3 \in \HH \cap L^2(\nu)$ such that for all $ t \in [0,T]$, $x,x' \in \rr^d$, $\mu, \mu' \in \PP_2 $ and $z \in Z$,

$$
\begin{aligned}
&| G(x,\mu,z) -G(x',\mu',z) | \le L_1(z) \left(|x-x'| +W_2(\mu,\mu') \right),\\
&|G(0,\delta_0,z)| \le L_2(z)
\end{aligned}
$$
and there exists nonnegative constant $\rho_{G,\e}$ converging to $0$ such that
$$
\sup_{(x,\mu)\in \rr^d \times \PP_2} | G_\e(x, \mu, z)-G(x,\mu,z)| \le \rho_{G,\e} L_3(z).
$$
\nenu

We know that {\bf (H2)'} $\Rightarrow$ {\bf (H2)} and {\bf (B1)'} $\Rightarrow$ {\bf (B1)}.
	
\begin{proposition}\label{nu}
Assume that {\bf Hypothesis \ref{H11}}, {\bf(C0)} and {\bf(C1)} hold. Then for any fixed $m\in(0,+\infty)$ and $u=(\phi,\psi)\in S_1^m \times B_2(m)$, there exists a unique solution $V^u=\big\{\big(V^{u}_t,\hat{K}^u_t\big), t\in[0,T]\big\}\in C([0,T],\mathbb{R}^d)$ to the following equation:
\begin{equation}\label{mdp1}
\left\{ \begin{aligned}
				&\mathrm{d}V^{u}_t=\nabla b(X^0_t,\mathcal{L}_{X^0_t})V^{u}_t\mathrm{d}t+\sigma(X_t^0,\mathcal{L}_{X_t^0})\phi(t)\mathrm{d}t\\
&\quad \quad\quad+\int_Z G(X_t^0, \LL_{X^0_t},z) \psi(t,z)\nu(\d z) \d t-\d \hat{K}^u_t,     \\
				&V^{u}_0=0.
\end{aligned}  \right.
\end{equation}
Moreover,
\begin{align}\label{nulim}
			\sup_{u \in S_1^m \times B_2(m)}\sup_{t\in[0,T]}|V^{u}_t|<+\infty.
\end{align}
\end{proposition}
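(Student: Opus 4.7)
The plan is to treat equation~(\ref{mdp1}) as a deterministic multivalued Cauchy problem
\begin{align*}
dV_t \in -A(V_t)\,dt + F(t)V_t\,dt + \eta^u(t)\,dt, \quad V_0 = 0, \quad (V,\hat K)\in \AA,
\end{align*}
with linear drift $F(t):=\nabla b(X^0_t,\LL_{X^0_t})$ and forcing $\eta^u(t):=\sigma(X^0_t,\LL_{X^0_t})\phi(t)+\int_Z G(X^0_t,\LL_{X^0_t},z)\psi(t,z)\nu(dz)$, and to adapt the standard Yosida-approximation machinery for multivalued ODEs together with Lemma~\ref{lem24} for the uniform bound. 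As a preliminary step I verify integrability: by (C1), $F\in L^2([0,T];\rr^d \otimes \rr^d)$; for $\eta^u$, the linear growth from (H3) and boundedness of $X^0$ on $[0,T]$ bound $\|\sigma(X^0_\cdot,\LL_{X^0_\cdot})\|_{\rr^d\otimes\rr^d}$ and $\int_Z|G(X^0_\cdot,\LL_{X^0_\cdot},z)|^2\nu(dz)$ uniformly, so combining with $\|\phi\|_{L^2}^2\le 2m$ and $\|\psi\|_{L^2(\nu_T)}^2\le m^2$ (via $\phi\in S_1^m$, $\psi\in B_2(m)$), plus Cauchy--Schwarz in $z$ for the $G$-term, I obtain $\|\eta^u\|_{L^2([0,T])}\le C(m)$ uniformly in $u$.

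For existence I employ the Yosida approximation: let $A_n$ be the Yosida regularization of $A$ and solve the classical linear ODE $\dot V^{u,n}_t=-A_n(V^{u,n}_t)+F(t)V^{u,n}_t+\eta^u(t)$, $V^{u,n}_0=0$. Applying the chain rule to $|V^{u,n}_t-a|^2$ for some $a\in\mathrm{Int}\,D(A)$ (provided by (H1)) and using monotonicity of $A_n$ yields $n$-uniform bounds on $\sup_t|V^{u,n}_t|$ and on the total variation of $\hat K^{u,n}_t:=\int_0^t A_n(V^{u,n}_s)\,ds$; Ascoli--Arzel\`a plus the Brezis-type closedness of the graph of $A$ then extract a limit $(V^u,\hat K^u)$ satisfying (\ref{mdp1}). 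For uniqueness, subtract two solutions $(V^u,\hat K^u)$ and $(V'^u,\hat K'^u)$, invoke Proposition~\ref{equivalent}(3) so that the $d(\hat K^u-\hat K'^u)$ contribution is nonpositive, then apply Gronwall to $|V^u_t-V'^u_t|^2$ with the weight $\|F(\cdot)\|_{\rr^d\otimes\rr^d}\in L^1([0,T])$ (by Cauchy--Schwarz and (C1)).

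For the uniform bound (\ref{nulim}), apply It\^o's formula (chain rule) to $|V^u_t-a|^2$:
\begin{align*}
|V^u_t-a|^2 = |a|^2 + 2\int_0^t\langle V^u_s-a,\, F(s)V^u_s+\eta^u(s)\rangle\,ds - 2\int_0^t\langle V^u_s-a,\, d\hat K^u_s\rangle,
\end{align*}
bound the last integral below by $r|\hat K^u|_0^t - \mu\int_0^t|V^u_s-a|\,ds - r\mu t$ via Lemma~\ref{lem24}, and control the cross terms by Young's inequality, which gives
\begin{align*}
|V^u_t-a|^2 + 2r|\hat K^u|_0^t \le C(m,a,T) + C\int_0^t\bigl(1+\|F(s)\|_{\rr^d\otimes\rr^d}\bigr)|V^u_s-a|^2\,ds.
\end{align*}
Gronwall's inequality with the integrable weight $1+\|F(\cdot)\|_{\rr^d\otimes\rr^d}$ then produces a bound depending only on $m$, $T$, $a$, $\sup_{[0,T]}|X^0|$ and the structural data of $A$, and hence independent of $u\in S_1^m\times B_2(m)$, which is (\ref{nulim}). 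The main obstacle I anticipate is enforcing this uniformity throughout the Yosida limit: every intermediate constant in the a priori estimates must depend on $u$ only through $m$, which in turn rests on the Cauchy--Schwarz bound for the $G$-term and on the fact that the constants $r,\mu$ in Lemma~\ref{lem24} are determined by $A$ and $a$ alone.
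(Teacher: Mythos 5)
Your proposal follows the same strategy as the paper — verify integrability of the coefficients $F=\nabla b(X^0_\cdot,\LL_{X^0_\cdot})$ (via (C1)) and of $\eta^u$ (via the linear growth of $\sigma$, $G$, boundedness of $X^0$, Cauchy--Schwarz, and the energy constraints $\|\phi\|_{L^2}^2\le 2m$, $\interleave\psi\interleave_2\le m$), then apply Gronwall for uniqueness and for the uniform bound \eqref{nulim}. The paper compresses the existence step into ``we can easily prove''; you usefully supply the standard Yosida-approximation construction together with Lemma~\ref{lem24} and Proposition~\ref{equivalent}, which is exactly the machinery the paper has set up and presumably intends.
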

		
		\begin{proof}
Since $\big\{\hat{K}^u_t, t\in[0,T]\big\}$ is of finite variation with $u \in S_1^m \times B_2(m)$, we have
$|\hat{K}^u|_0^T<+\infty.$

By {\bf (H2)}, {\bf (H3)}, {\bf (B1)} and using the fact that
\beq\label{123bound} \int_Z (L_1^2(z) +L_2^2(z)+L_3^2(z)) \nu (\d z) < + \infty,
\nneq
we can prove that
\beq\label{Gbound}
\int_0^T \|\sigma( X_s^0, \LL_{X_s^0})\|_{\rr^d \otimes \rr^d}^2 \d s+\int_0^T \int_Z | G (X_{s}^0,\LL_{X_s^0},z)|^2\nu(\d z)\d s < +\infty.
\nneq
			
By {\bf(H2)}, {\bf (H3)}, Remark \ref{WXY} and the fact that $X^0\in C([0,T],\mathbb{R}^d)$ and $u=(\phi,\psi) \in S_1^m \times B_2(m)$, we have
\begin{align}\label{22}
\int_{0}^{T}\lvert\sigma(X_t^0,\mathcal{L}_{X_t^0})\phi(t)\rvert\mathrm{d}t  \nonumber
\leq &\left(\int_{0}^{T}\lVert\sigma(X_t^0,\mathcal{L}_{X_t^0})\rVert_{\rr^d \otimes \rr^d}^2\mathrm{d}t\right)^{\frac{1}{2}}\left(\int_{0}^{T}|\phi(t)|^2\mathrm{d}t\right)^\frac{1}{2}   \nonumber\\
\leq &\left(\int_{0}^{T}\lVert\sigma(X_t^0,\mathcal{L}_{X_t^0})\rVert_{\rr^d \otimes \rr^d}^2\mathrm{d}t\right)^{\frac{1}{2}}(2m)^{\frac{1}{2}}
<+\infty
\end{align}
and
\begin{align}\label{23}
&\int_0^T \int_Z |G(X_t^0, \LL_{X_t^0},z)\psi(t,z)| \nu(\d z) \d t \nonumber\\
\le & \left( \int_0^T \int_Z |G(X_t^0, \LL_{X_t^0},z)|^2 \nu(\d z) \d t  \right)^{\frac{1}{2}} \left( \int_0^T \int_Z |\psi(t,z)|^2 \nu(\d z) \d t\right)^{\frac{1}{2}} \nonumber\\
\le & \left( \int_0^T \int_Z |G(X_t^0, \LL_{X_t^0},z)|^2 \nu(\d z) \d t  \right)^{\frac{1}{2}} m^{\frac{1}{2}} < + \infty.
\end{align}
			
Due to {\bf (C1)}, the Gronwall's inequality and the estimates above we can easily prove that the equation (\ref{mdp1}) has a unique solution $\big\{\big(V^{u}_t,\hat{K}^u_t\big), t\in[0,T]\big\}$.
The estimate (\ref{nulim}) follows by using Gronwall's inequality.			
\end{proof}

Now we state our main result about the moderate deviation principle.
\bthm\label{MDP}
Assume that Hypotheses \ref{H11} and \ref{H22} hold with condition {\bf (H2)} replaced by {\bf (H2)'} and {\bf (H6)} replaced by {\bf (H6)'}, and that conditions {\bf (C0)}, {\bf (C1)} and {\bf (C2)} are satisfied. Then $\left\{M^{\e},\e>0\right\}$ satisfies a LDP on $\DD([0,T],\overline{D(A)})$ with speed $\frac{\e}{\lambda^2(\e)}$ and the rate function $I$ given by for any $g \in  \DD([0,T],\overline{D(A)})$
\begin{align}
I(g):=\frac{1}{2}\inf_{\left\{u=(\phi,\psi) \in L^2([0,T] ,\mathbb{R}^d)\times L_2(\nu_T), V^{u}=g\right\}}\left \{ \int_{0}^{T}|\phi(s)|^2\mathrm{d}s +\int_0^T \int_Z | \psi(s,z)|^2 \nu(\d z) \d s \right\},
\end{align}
where for $u=(\phi,\psi) \in L^2([0,T] ,\mathbb{R}^d)\times L_2(\nu_T)$, $\big(V^{u},\hat{K}^u\big)$ is the unique solution of (\ref{mdp1}). Here we use the convention that $ \inf \emptyset = +\infty$.
\nthm
	
\begin{proof}
By {\bf Proposition \ref{nu}}, we can define a map
\begin{align}
\Upsilon^0:L^2([0,T],\mathbb{R}^d) \times L_2(\nu_T) \ni u=(\phi,\psi) \mapsto V^u \in \DD([0,T],\overline{D(A)}),
\end{align}
where $V^u$ is the unique solution of (\ref{nu}).
		
For any $\e>0, m\in(0,+\infty)$ and $u_{\e}=(\phi_\e, \psi_\e) \in\mathcal{S}_1^{m} \times \mathcal{S}_{+,\e}^{m} $, recall that $\left\{\left(M^{\e,u_\e}_t,\hat{K}^{\e,u_\e}_t\right)\right\}_{t\in[0,T]}$ is the solution to the following SDE:
		\begin{equation}\label{mdp2m}
			\left\{\begin{aligned}
				\mathrm{d}M^{\e,u_\e}_t=&\frac{1}{\lambda(\e)}\left(b_\e(\lambda(\e)M^{\e,u_{\e}}_t+X_t^0,\mathcal{L}_{X_t^\e})-b(X_t^0,\mathcal{L}_{X_t^0})\right)\mathrm{d}t  \\
				&+\frac{\sqrt{\e}}{\lambda(\e)}\sigma_{\e}(\lambda(\e)M^{\e,u_{\e}}_t+X^0_t,\mathcal{L}_{X_t^\e})\mathrm{d}W_t  \\
				&+\sigma_{\e}(\lambda(\e)M^{\e,u_{\e}}_t+X^0_t,\mathcal{L}_{X_t^\e})\phi_{\e}(t)\mathrm{d}t-\mathrm{d} \hat{K}^{\e,u_\e}_t \\
&+\frac{\e }{ \lambda(\e)} \int_Z G_{\e}(\lambda(\e)M^{\e,u_{\e}}_{t-}+X^0_{t-},\mathcal{L}_{X_t^\e},z ) \tilde{N}^{\e^{-1}\psi_\e} (\d z, \d t) \\
&+ \frac{1}{\lambda(\e)} \int_Z G_{\e}(\lambda(\e)M^{\e,u_{\e}}_{t}+X^0_{t},\mathcal{L}_{X_t^\e},z )(\psi_\e(t,z)-1) \nu( \d z) \d t,  \\				
\,\,\,\, M^{\e,u_{\e}}_0=&\,\,0.
			\end{aligned}\right.
		\end{equation}

Similar to the proof of LDP, it is sufficient to verify the following two claims:
		
\noindent{\bf (MDP1)} For any given $m\in(0,+\infty)$, let $\left\{u_n=(\phi_n,\psi_n), n\in\mathbb{N}\right\}, u=(\phi,\psi)\in S_1^m \times B_2(m)$ be such that $u_n\to u$ in $S_1^m \times B_2(m)$ as $n\to +\infty$.  Then
\beq\label{md1}
\lim\limits_{n\to +\infty}\sup_{t\in[0,T]}|\Upsilon^0(u_n)(t)-\Upsilon^0(u)(t)|=0.
\nneq
		
\noindent{\bf (MDP2)} For any given $m\in(0,+\infty)$, let $\left\{u_{\e}=(\phi_\e,\psi_\e),\e>0\right\}\in \mathcal{S}_1^m \times \mathcal{S}_{+,\e}^m$, and for some $\beta \in (0,1]$, $ \varphi_\e 1_{\{ |\varphi_\e| \le \beta/\lambda(\e) \}} \in B_2( \sqrt{m \kappa_2(1)})$ where $\varphi_\e= (\psi_\e -1)/ \lambda(\e)$. Set
\beq\label{md2}
\tilde{u}_\e:=\left( \phi_\e, \varphi_\e 1_{\{ |\varphi_\e| \le \beta/\lambda(\e) \}} \right).
\nneq
Then for any $\xi>0$,
$$
\lim\limits_{\e\to0}P\big(\sup_{t\in[0,T]}|M^{\e,u_{\e}}_t-\Upsilon^0(\tilde{u}_{\e})(t)|>\xi\big)=0.
$$
		
		The verifications of {\bf (MDP1)} and {\bf (MDP2)} will be given in Section 4.2.
		
	\end{proof}

\section{Proof of LDP and MDP}
In this section, we present the proofs of Theorems \ref{LDP} and \ref{MDP}. We observe that while hypothesis {\bf (H3)} naturally implies {\bf (H3)'}, the weaker condition {\bf (H3)'} suffices to establish the subsequent proofs. Therefore, in what follows, we will work under hypothesis {\bf (H3)'} rather than {\bf (H3)}, as this relaxation maintains the validity of our arguments while broadening the potential applications of the theorems.

\subsection{proof of LDP1 and LDP2}

\bprop\label{LDP1p}
For any given $ m \in (0,+\infty)$, let $u_n=(\phi_n, \psi_n), n \in \nn, u =(\phi,\psi) \in S_1^m \times S_2^m$ such that $u_n \to u $ in $ S_1^m \times S_2^m $ as $n \to +\infty$. Then
\beq\label{LDP11}
\lim_{n \to +\infty} \sup_{t \in [0,T]} | \Gamma^0(u_n)(t)- \Gamma^0(u)(t)|=0.
\nneq
\nprop

\bprf
Let $Y^u$ be the solution of \eqref{Yu} and $Y^{u_n}$ be the solution of of \eqref{Yu} with $u$ replaced by $u_n$.
By the definition of $\Gamma^0$, we have $Y^u=\Gamma^0(u)$ and $ Y^{u_n}= \Gamma^0(u_n)$, we only need to prove
$$
\lim_{n \to +\infty} \sup_{t \in [0,T]} | Y^{u_n}- Y^{u}|=0.
$$
Note that $ Y^{u},Y^{u_n}\in \DD([0,T],\overline{D(A)}), \forall n \in \nn$.
By Lemma \ref{lemmaYu}, we know that $\{ Y^{u_n}\}_{n \ge 1}$  is uniformly bounded, i.e.,
$$
\sup_{n \ge 1 } \sup_{t \in [0,T]} |Y_t^{u_n} | < + \infty.
$$

For simplicity, We denote for some constant $C>0$
\beq\label{upbound}
\max \left\{ \sup_{n \ge 1 } \sup_{t \in [0,T]} |Y_t^{u_n} |,  \sup_{t \in [0,T]} |Y_t^{u} | \right\} \le C.
\nneq

In this proof, $C$ is some positive constant independent of $n$. The value of $C$ may be different from line to line.

Set $\omega_n(t):=Y_t^{u_n}-Y_t^u $, we have
\begin{align*}
&\omega_n(t)=Y_t^{u_n}-Y_t^u=-(K_t^{u_n}-K_t^u)\\
&+ \int_0^t \left[ b(Y_s^{u_n},\LL_{X_s^0})- b(Y_s^{u},\LL_{X_s^0})\right] \d s\\
&+ \int_0^t \left[ \sigma(Y_s^{u_n},\LL_{X_s^0})\phi_n(s) - \sigma(Y_s^{u},\LL_{X_s^0})\phi(s)\right]\d s\\
&+ \int_0^t \int_Z\left[ G(Y_s^{u_n},\LL_{X_s^0},z)(\psi_n(s,z)-1)-G(Y_s^{u},\LL_{X_s^0},z)(\psi(s,z)-1) \right]\nu(\d z) \d s.
\end{align*}

By {\bf (H2)} and Proposition \ref{equivalent}, we have
\begin{align}\label{eqI0}
&|\omega_n(t)|^2=-2\int_0^t \langle \omega_n(s), \d K_s^{u_n}-\d K_s^u \rangle \nonumber \\
&+2\int_0^t\langle \omega_n(s),   b(Y_s^{u_n},\LL_{X_s^0})- b(Y_s^{u},\LL_{X_s^0}) \rangle \d s\nonumber\\
&+2\int_0^t \langle \omega_n(s), \sigma(Y_s^{u_n},\LL_{X_s^0})\phi_n(s) - \sigma(Y_s^{u},\LL_{X_s^0})\phi(s)  \rangle \d s\nonumber\\
&+ 2 \int_0^t \int_Z \langle \omega_n(s), G(Y_s^{u_n},\LL_{X_s^0},z)(\psi_n(s,z)-1)-G(Y_s^{u},\LL_{X_s^0},z)(\psi(s,z)-1)\rangle\nu(\d z) \d s \nonumber\\
\le & 2\int_0^t\langle \omega_n(s),   b(Y_s^{u_n},\LL_{X_s^0})- b(Y_s^{u},\LL_{X_s^0}) \rangle \d s\nonumber\\
&+ 2 \int_0^t \langle \omega_n(s),  \sigma(Y_s^{u},\LL_{X_s^0}) \left( \phi_n(s) - \phi(s) \right)  \rangle
\d s\nonumber\\
&+ 2 \int_0^t \langle \omega_n(s),  \left( \sigma(Y_s^{u_n},\LL_{X_s^0})-\sigma(Y_s^{u},\LL_{X_s^0}) \right)  \phi_n(s)   \rangle \d s\nonumber\\
&+2 \int_0^t \int_Z \langle \omega_n(s), \left[G(Y_s^{u_n},\LL_{X_s^0},z)-G(Y_s^{u},\LL_{X_s^0},z)\right] (\psi_n(s,z)-1)\rangle\nu(\d z) \d s \nonumber\\
&+ 2 \int_0^t \int_Z \langle \omega_n(s), G(Y_s^{u},\LL_{X_s^0},z) (\psi_n(s,z)-\psi(s,z))\rangle\nu(\d z) \d s\nonumber\\
=&:I_1(t)+I_2(t)+I_3(t)+I_4(t)+I_5(t).
\end{align}
By {\bf (H2)},
\beq\label{eqI1}
|I_1(t)|\le 2L \int_0^t \kappa\left(\sup_{r \in [0,s]}|\omega_n(r)|^2\right) \d s.
\nneq

For $I_2(t)$, since $\phi,\ \phi_n \in S_1^m$, set
$$
g_n(t):= \int_0^t \sigma(Y_s^u, \LL_{X_s^0})(\phi_n(s) - \phi(s)) \d s.
$$
We prove that $g_n(\cdot) \to 0$ as $n \to +\infty$ in $ \DD([0,T],\overline{D(A)}) $.

First of all we prove that
\begin{itemize}
\item[(1)]
$
\sup_{n\ge 1} \sup_{t \in [0,T]} |g_n(t)| <+\infty;
$\\
\item[(2)]
$ \{[0,T]\ni t \mapsto g_n(t), n \ge 1 \} $ is equi-continous.
\end{itemize}
For $0 \le s <t \le T$, by {\bf (H3)'} and \eqref{upbound}, for some positive constant $C$ independent of $n$, we have
\begin{align}
|g_n(t) - g_n(s)|&=\left| \int_s^t  \sigma(Y_r^u, \LL_{X_r^0})(\phi_n(r) - \phi(r))  \d r \right|\nonumber \\
&\le \left( \int_s^t \|  \sigma(Y_r^u, \LL_{X_r^0})\|^2_{\rr^d \otimes \rr^d}  \d r \right)^{\frac{1}{2}} \left( \int_s^t |\phi_n(r) - \phi(r)|^2 \d r  \right)^{\frac{1}{2}} \nonumber \\
&\le 2 m^{\frac{1}{2}} \left( \int_s^t \|  \sigma(Y_r^u, \LL_{X_r^0})\|^2_{\rr^d \otimes \rr^d}  \d r \right)^{\frac{1}{2}} \nonumber \\
&\le  2 m^{\frac{1}{2}} \left( \int_s^t L (1+| Y_r^u |^2+\| \LL_{X_r^0}\|_2^2 )  \d r \right)^{\frac{1}{2}} \nonumber \\& \le 2Cm^{\frac{1}{2}} \sqrt{t-s},
\end{align}
which means (2) holds.

Letting $s= 0$, we have
$$
|g_n(t)|\le 2Cm^{\frac{1}{2}} \sqrt{T} <+\infty.
$$
Then (1) holds.

Combining (1) and (2), by the Ascoli-Arzel\'{a} lemma, we get that $\{g_n,\ n \ge 1\}$ is pre-compact in
$\DD([0,T],\overline{D(A)})$.
Since $\phi_n \to \phi$ in $S_1^m$ and
$$
\int_0^t \|  \sigma(Y_r^u, \LL_{X_r^0})\|^2_{\rr^d \otimes \rr^d}  \d r <+\infty,
$$
we have for any $t \in [0,T]$, $\lim_{n \to +\infty} |g_n(t)|=0$, which implies that
\beq\label{gn}
\lim_{n \to +\infty} \sup_{t \in[0,T]}|g_n(t)|=0.
\nneq

For $I_2(t)$, by Taylor formula to $\langle \omega_n(t), g_n(t)\rangle$, we have
\begin{align*}
\frac{1}{2}I_2(t)=&\langle \omega_n(t), g_n(t)\rangle+\int_0^t \langle g_n(s), \d (K_s^{u_n}-K_s^u) \rangle \\
&- \int_0^t \left\langle g_n(s), b(Y_s^{u_n},\LL_{X_s^0})- b(Y_s^{u},\LL_{X_s^0})\right\rangle \d s\\
&- \int_0^t \left\langle g_n(s), \sigma(Y_s^{u_n},\LL_{X_s^0})\phi_n(s) - \sigma(Y_s^{u},\LL_{X_s^0})\phi(s)\right\rangle \d s\\
&- \int_0^t \int_Z\left\langle g_n(s), \left[ G(Y_s^{u_n},\LL_{X_s^0},z)(\psi_n(s,z)-1)-G(Y_s^{u},\LL_{X_s^0},z)(\psi(s,z)-1) \right] \right\rangle \nu(\d z) \d s\\
=&: I_{21}(t)+I_{22}(t)+I_{23}(t)+I_{24}(t)+I_{25}(t).
\end{align*}

Since $\sup_{t \in [0,T]} |I_{21}(t)| \le \sup_{t \in [0,T]}|g_n(t)|\sup_{t \in [0,T]}|\omega_n(t)| $,
by \eqref{upbound} and \eqref{gn} we have
\beq\label{I21}
\lim_{n \to +\infty}\sup_{t \in [0,T]} |I_{21}(t)| =0.
\nneq

Since $\{ K_s^{u_n}, t \in [0,T]\}$ and $\{ K_s^{u_n}, t \in [0,T]\} $ are of finite variation, by \eqref{gn} we have
\beq\label{I22}
\lim_{n \to +\infty}\sup_{t \in [0,T]} |I_{22}(t)| =0.
\nneq

For $I_{23}(t)$ and $I_{24}(t)$, by {\bf (H3)'}, \eqref{upbound} and  \eqref{gn}, using the same deduction to the above, we obtain that
\beq\label{I234}
\lim_{n \to +\infty}\sup_{t \in [0,T]} |I_{23}(t)| =0,\quad \lim_{n \to +\infty}\sup_{t \in [0,T]} |I_{24}(t)| =0.
\nneq

For $I_{25}(t)$,
\begin{align*}
&\lim_{n \to +\infty}\sup_{t \in [0,T]} |I_{25}(t)| \\
\le& \sup_{t \in [0,T]} |g_n(t)| \int_0^T \int_Z \left| G(Y_s^{u_n},\LL_{X_s^0},z)(\psi_n(s,z)-1)-G(Y_s^{u},\LL_{X_s^0},z)(\psi(s,z)-1) \right| \nu(\d z) \d s.
\end{align*}

Since $\kappa(\cdot)$ is concave and increasing, there must exist a positive constant $a$ such that
\beq\label{ku}
\kappa(u) \le a(1+u).
\nneq
Since $\psi,\ \psi_n \in S_2^m$, by \eqref{upbound}, \eqref{ku}, {\bf (H3)'}, {\bf (H6)} and H\"{o}lder's inequality, we have
\begin{align}\label{I25b}
&\int_0^T \int_Z \left| G(Y_s^{u_n},\LL_{X_s^0},z)(\psi_n(s,z)-1)-G(Y_s^{u},\LL_{X_s^0},z)(\psi(s,z)-1) \right| \nu(\d z) \d s\nonumber\\
\le & \int_0^T \int_Z \left| \left( G(Y_s^{u_n},\LL_{X_s^0},z)-G(Y_s^{u},\LL_{X_s^0},z)\right)(\psi_n(s,z)-1)\right| \nu(\d z) \d s \nonumber\\
&+ \int_0^T \int_Z \left|G(Y_s^{u},\LL_{X_s^0},z)\left(\psi_n(s,z)-\psi(s,z)\right) \right| \nu(\d z) \d s\nonumber\\
\le & \int_0^T \int_Z \sqrt{ \kappa \left(\left| Y_s^{u_n}-Y_s^{u}\right|^2\right)} L_1(z)\left| (\psi_n(s,z)-1)\right| \nu(\d z) \d s \nonumber\\
&+ \left(\int_0^T \int_Z \left|G(Y_s^{u},\LL_{X_s^0},z)\right|^2 \nu(\d z) \d s\right)^{\frac{1}{2}} \nonumber\\
&\times\left[ \left( \int_0^T \int_Z |\psi_n(s,z)|^2 \nu(\d z) \d s \right)^\frac{1}{2}+\left( \int_0^T \int_Z |\psi(s,z)|^2 \nu(\d z) \d s \right)^\frac{1}{2}\right]\nonumber\\
\le &   \sup_{t \in [0,T]} \sqrt{ a \left( 1+  \left| Y_t^{u_n}-Y_t^{u}\right|^2 \right)}\int_0^T \int_Z  L_1(z)\left| (\psi_n(s,z)-1)\right| \nu(\d z) \d s \nonumber\\
& +2 m^{\frac{1}{2}} \left(\int_0^T \int_Z \left|G(Y_s^{u},\LL_{X_s^0},z)\right|^2 \nu(\d z) \d s\right)^{\frac{1}{2}}.
\end{align}

By Lemma 3.4 in \cite{BCD13}, we have the following result.

For every $\theta>0$, there exists some $\beta>0$ such that for any $A \in \BB([0,T])$ with $Leb_T(A) \le \beta$,
\beq\label{L}
\sup_{i=1,2,3} \sup_{\psi \in S_2^m} \int_A \int_Z L_i(z) | \psi(s,z) -1| \nu(\d z) \d s \le \theta.
\nneq
Hence we have
\beq\label{L1}
 \sup_{\psi \in S_2^m} \int_0^T  \int_Z  L_1(z) | \psi(s,z)-1|\nu(\d z) \d s < +\infty.
\nneq
By \eqref{upbound}, \eqref{I25b}, \eqref{L1} and {\bf (H3)'},
$$
\int_0^T \int_Z \left| G(Y_s^{u_n},\LL_{X_s^0},z)(\psi_n(s,z)-1)-G(Y_s^{u},\LL_{X_s^0},z)(\psi(s,z)-1) \right| \nu(\d z) \d s <+\infty.
$$
Hence, by \eqref{gn}
\beq\label{I25}
\lim_{n \to +\infty}\sup_{t \in [0,T]} |I_{25}(t)| =0.
\nneq
Combing \eqref{I21}, \eqref{I22}, \eqref{I234} and \eqref{I25}, we obtain that
\beq \label{I2}
\lim_{n \to +\infty}\sup_{t \in [0,T]} |I_{2}(t)| =0.
\nneq

For $I_3(t)$, by {\bf (H2)}, Young's inequality and the definition of $S_1^m$, we have
\begin{align}\label{eqI3}
|I_3(t)|=& 2 \left|\int_0^t \langle \omega_n(s),  \left( \sigma(Y_s^{u_n},\LL_{X_s^0})-\sigma(Y_s^{u},\LL_{X_s^0}) \right)  \phi(s)   \rangle \d s\right|\nonumber\\
\le & 2 \int_0^t |\omega_n(s)| \sqrt{  \kappa \left( |\omega_n(s)|^2\right)} |\phi_n(s)| \d s \nonumber\\
\le & 2 \left( \int_0^t |\omega_n(s)|^2  \kappa \left( |\omega_n(s)|^2\right) \d s \right)^{\frac{1}{2}} \left(\int_0^t |\phi_n(s)|^2\d s\right)^{\frac{1}{2}}\nonumber\\
\le & 2 \sqrt{ m} \left( \sup_{s\in [0,t]} |\omega_n(s)|^2 \right)^{\frac{1}{2}} \left( \int_0^t \kappa \left( |\omega_n(s)|^2\right)\d s \right)^{\frac{1}{2}}\nonumber\\
\le & \frac{1}{4} \sup_{s \in [0,t]} |\omega_n(s)|^2+C \int_0^t  \kappa \left( \sup_{r \in [0,s]}|\omega_n(r)|^2\right) \d s.
\end{align}

For $I_4(t)$, by {\bf (H6)}, \eqref{L1}, Young's inequality, we have
\begin{align}\label{eqI4}
|I_4(t)|= & 2 \left|\int_0^t \int_Z \langle \omega_n(s), \left[G(Y_s^{u_n},\LL_{X_s^0},z)-G(Y_s^{u},\LL_{X_s^0},z)\right] (\psi_n(s,z)-1)\rangle\nu(\d z) \d s\right|\nonumber\\
\le & 2\int_0^t \int_Z  L_1(z)|\omega_n(s)|  \sqrt{  \kappa \left( |\omega_n(s)|^2\right)}
|\psi_n(s,z)-1|\nu(\d z) \d s\nonumber\\
\le & 2\int_0^t \left( \eta_1 |\omega_n(s)|^2 +C \kappa \left( |\omega_n(s)|^2\right) \right)\int_Z  L_1(z)|\psi_n(s,z)-1|\nu(\d z) \d s \nonumber\\
\le & \frac{1}{4} \sup_{s \in [0,t]} |\omega_n(s)|^2+  C \int_0^t  \kappa \left(\sup_{r \in [0,s]} |\omega_n(r)|^2\right) \int_Z  L_1(z)|\psi_n(s,z)-1|\nu(\d z) \d s,
\end{align}
the last inequality holds since we can choose $\eta_1$ small enough such that $ \eta_1 \int_0^t \int_Z L_1(z)|\psi_n(s,z)-1|\nu(\d z) \d s < \frac{1}{4}$.

For $I_5(t)$, since $\psi,\ \psi_n \in S_2^m$, set
$$
h_n(t):= \int_0^t \int_Z G(Y_s^{u},\LL_{X_s^0},z) (\psi_n(s,z)-\psi(s,z)) \nu(\d z)\d s.
$$
By the similar proof to \eqref{gn}, we can obtain that $h_n(\cdot) \to 0$ as $n \to +\infty$ in $ \DD([0,T],\overline{D(A)}) $.

For $I_5(t)$, by Taylor formula to $\langle \omega_n(t), h_n(t)\rangle$, we have
\begin{align*}
\frac{1}{2}I_5(t)=&\langle \omega_n(t), h_n(t)\rangle+\int_0^t \langle h_n(s), \d (K_s^{u_n}-K_s^u) \rangle \\
&- \int_0^t \left\langle h_n(s), b(Y_s^{u_n},\LL_{X_s^0})- b(Y_s^{u},\LL_{X_s^0})\right\rangle \d s\\
&- \int_0^t \left\langle h_n(s), \sigma(Y_s^{u_n},\LL_{X_s^0})\phi_n(s) - \sigma(Y_s^{u},\LL_{X_s^0})\phi(s)\right\rangle \d s\\
&- \int_0^t \int_Z\left\langle h_n(s), \left[ G(Y_s^{u_n},\LL_{X_s^0},z)(\psi_n(s,z)-1)-G(Y_s^{u},\LL_{X_s^0},z)(\psi(s,z)-1) \right] \right\rangle \nu(\d z) \d s\\
=&: I_{51}(t)+I_{52}(t)+I_{53}(t)+I_{54}(t)+I_{55}(t).
\end{align*}
By the similar deduction to \eqref{I2}, we can obtain that
\beq \label{I5}
\lim_{n \to +\infty}\sup_{t \in [0,T]} |I_{5}(t)| =0.
\nneq

Combining \eqref{eqI0}, \eqref{eqI1}, \eqref{eqI3} and \eqref{eqI4}, we have
\begin{align}
|\omega_n(t)|^2 \le & \frac{1}{2} \sup_{s \in [0,t]} |\omega_n(s)|^2+(2L+C) \int_0^t  \kappa \left( \sup_{r \in [0,s]}|\omega_n(r)|^2\right) \d s\nonumber\\
&+C\int_0^t \kappa \left( \sup_{r \in [0,s]}|\omega_n(r)|^2\right) \int_Z  L_1(z) | \psi_n(s,z)-1|\nu(\d z) \d s\nonumber\\
&+I_2(t)+I_5(t),
\end{align}
which  implies that
\begin{align}
&\sup_{t \in [0,T]}|\omega_n(t)|^2 \nonumber\\
\le&  \int_0^T  \kappa \left( \sup_{r \in [0,s]}|\omega_n(r)|^2\right) \left(2(2L+C)+2C\int_Z  L_1(z) | \psi_n(s,z)-1|\nu(\d z) \right) \d s\nonumber\\
&+2\sup_{t \in [0,T]}I_2(t)+2\sup_{t \in [0,T]}I_5(t)\nonumber\\
=&:  \int_0^T  \kappa \left( \sup_{r \in [0,s]}|\omega_n(r)|^2\right) \left(2(2L+C)+2C\int_Z  L_1(z) | \psi_n(s,z)-1|\nu(\d z) \right) \d s+O_1(n),
\end{align}
where
\beq\label{O1}
O_1(n) \to 0 \mbox{ as } n \to +\infty.
\nneq

Setting $f(t)=\int_1^t \frac{1}{\kappa(s)} \d s $, it follows from Lemma \ref{Bihari} that
\begin{align}
&\sup_{t \in [0,T]} | Y^{u_n}- Y^{u}|^2 \\
\le & f^{-1}\left( f(O_1(n)) + \int_0^T \left(2(2L+C)+2C\int_Z  L_1(z) | \psi_n(s,z)-1|\nu(\d z) \right) \d s   \right).
\end{align}
By \eqref{L1}, we have
$$
\int_0^T \left(2(2L+C)+2C\int_Z  L_1(z) | \psi_n(s,z)-1|\nu(\d z) \right) \d s<+\infty.
$$

Recalling the condition $ \int_{0+} \frac{1}{\kappa(s)} \d s=+\infty $, by \eqref{O1} we can conclude that
$$
f(O_1(n))+ \int_0^T \left(2(2L+C)+2C\int_Z  L_1(z) | \psi_n(s,z)-1|\nu(\d z) \right) \d s  \to -\infty \mbox{ as } n \to +\infty.
$$
On the other hand, because $f$ is a strictly increasing function, then we obtain that $f$ has an inverse function which is strictly increasing and $f^{-1}(-\infty)=0$. Thus,
$$
f^{-1}\left(  f(O_1(n))+ \int_0^T \left(2(2L+C)+2C\int_Z  L_1(z) | \psi_n(s,z)-1|\nu(\d z) \right) \d s \right)\to 0 \mbox{ as  } n \to +\infty.
$$
Hence, we have
$$
\lim_{n \to +\infty} \sup_{t \in [0,T]} | Y^{u_n}- Y^{u}|^2=0,
$$
then
$$
\lim_{n \to +\infty} \sup_{t \in [0,T]} | Y^{u_n}- Y^{u}|=0,
$$
which is the desired result.
\nprf

To verify {\bf {(LDP2)}}, we need the following result.

\blem\label{LDPlem}
Under {\bf (H1-H3)}, {\bf (H5)} and {\bf (H6)},
\beq
\lim_{\e \to 0}\ee  \left(\sup_{t \in [0,T]} | X_t^\e-X_t^0|^2 \right)= 0.
\nneq
\nlem

In the following two proofs, $C$ is the positive constant independent of $\e$. The value of $C$ may be different from line to line.

\bprf
Note that
\begin{align}
X_t^\e - X_t^0=&-(K_t^\e - K_t^0)+\int_0^t \left( b_\e( X_s^\e, \LL_{X_s^\e})-b( X_s^0, \LL_{X_s^0})\right) \d s \nonumber\\
&+\sqrt{\e} \int_0^t \sigma_\e (X_s^\e,\LL_{X_s^\e})\d W_s+ \e \int_0^t \int_{Z} G_\e (X_{s-}^\e,\LL_{X_s^\e},z)\tilde{N}^{\e^{-1}} (\d z,\d s).
\end{align}
By It\^{o}'s formula,
\begin{align}\label{J0}
&|X_t^\e - X_t^0|^2\\
=& -2\int_0^t \langle X_s^\e -X_s^0, dK_s^\e -dK_s^0  \rangle \nonumber \\
&+2\int_0^t \langle X_s^\e -X_s^0,  b_\e( X_s^\e, \LL_{X_s^\e})-b( X_s^0, \LL_{X_s^0}) \rangle \d s\nonumber \\
&+2\sqrt{\e}\int_0^t  \langle X_s^\e -X_s^0,  \sigma_\e( X_s^\e, \LL_{X_s^\e}) \d W_s\rangle \nonumber \\
&+2\e \int_0^t \int_Z  \langle X_{s-}^\e -X_{s-}^0,  G_\e (X_{s-}^\e,\LL_{X_s^\e},z) \rangle\tilde{N}^{\e^{-1}} (\d z,\d s) \nonumber \\
&+\e \int_0^t \| \sigma_\e( X_s^\e, \LL_{X_s^\e}) \|_{\rr^d \otimes \rr^d}^2 \d s \nonumber \\
&+ \e^2 \int_0^t \int_Z | G_\e (X_{s-}^\e,\LL_{X_s^\e},z)  |^2 N^{\e^{-1}} (\d z,\d s) \nonumber \\
=&: J_1(t)+J_2(t)+J_3(t)+J_4(t)+J_5(t)+J_6(t).
\end{align}

For $J_1(t)$, by Definition \ref{solution}, we have
\beq\label{J1}
J_1(t) \le 0.
\nneq

For $J_2(t)$, by {\bf (H2)}, {\bf (H5)}, H\"{o}lder's inequality, Young's inequlity and Remark \ref{WXY}, we have for any $\eta_2 >0$,
\begin{align}\label{J2}
J_2(t)=&2\int_0^t \langle X_s^\e -X_s^0,  b_\e( X_s^\e, \LL_{X_s^\e})-b( X_s^0, \LL_{X_s^0}) \rangle \d s \nonumber \\
\le& 2 \int_0^t \langle X_s^\e -X_s^0,  b_\e( X_s^\e, \LL_{X_s^\e})-b( X_s^\e, \LL_{X_s^\e}) \rangle \d s\nonumber \\
&+ 2 \int_0^t \langle X_s^\e -X_s^0,  b( X_s^\e, \LL_{X_s^\e})-b( X_s^0, \LL_{X_s^0}) \rangle \d s\nonumber \\
\le& 2 \rho_{b,\e} \int_0^T | X_s^\e-X_s^0| \d s +2 \int_0^T \left( \kappa\left(| X_s^\e-X_s^0|^2 \right)+ \kappa \left(W_2^2( \LL_{X_s^\e}, \LL_{X_s^0})\right)  \right)\d s \nonumber \\
\le&  \eta_2 \int_0^T  | X_s^\e-X_s^0|^2 \d s +2  \int_0^T  \kappa\left(| X_s^\e-X_s^0|^2\right) \d s +2  \int_0^T  \kappa\left(\ee \left(| X_s^\e-X_s^0|^2\right)\right) \d s+ C \rho_{b,\e}^2 T \nonumber \\
\le &  \eta_2 \int_0^T  | X_s^\e-X_s^0|^2 \d s+2  \int_0^T  \kappa\left(| X_s^\e-X_s^0|^2\right) \d s \nonumber \\
&+2 \int_0^T  \ee \left(   \kappa\left(| X_s^\e-X_s^0|^2\right)  \right)  \d s+ C \rho_{b,\e}^2 T.
\end{align}

Hence,
\begin{align}\label{J11}
&\ee \left( \sup_{t \in [0,T]} J_2(t)\right) \nonumber\\
\le& \eta_2 T  \ee \left(\sup_{t \in [0,T]}  | X_t^\e-X_t^0|^2 \right)+ 4 \ee \int_0^T  \kappa\left(\sup_{r \in [0,s]}| X_r^\e-X_r^0|^2\right) \d s + C \rho_{b,\e}^2 T.
\end{align}

For $J_5(t)$, by {\bf (H2)}, {\bf (H5)}, \eqref{ku} and Remark \ref{WXY}, we have
\begin{align}\label{J5}
\ee \left( \sup_{t \in [0,T]} J_5(t)\right)\le &\e \ee \int_0^T \| \sigma_\e( X_s^\e, \LL_{X_s^\e}) \|_{\rr^d \otimes \rr^d}^2 \d s \nonumber\\
\le& C \e \ee \int_0^T \|\sigma_\e(X_s^\e, \LL_{X_s^\e})-\sigma(X_s^\e, \LL_{X_s^\e})\|_{\rr^d \otimes \rr^d}^2 \d s \nonumber\\
&+C \e \ee \int_0^T \|\sigma( X_s^\e, \LL_{X_s^\e})-\sigma( X_s^0, \LL_{X_s^0})\|_{\rr^d \otimes \rr^d}^2 \d s \nonumber\\
&+ C \e \int_0^T \|\sigma( X_s^0, \LL_{X_s^0})\|_{\rr^d \otimes \rr^d}^2 \d s \nonumber\\
\le& C \e \rho_{\sigma,\e}^2 T\nonumber \\
&+C  \e \ee \int_0^T  \left(\kappa \left(  | X_s^\e-X_s^0|^2 \right) +\kappa \left( W_2^2( \LL_{X_s^\e}, \LL_{X_s^0}) \right) \right) \d s \nonumber\\
&+ C\e \int_0^T \|\sigma( X_s^0, \LL_{X_s^0})\|_{\rr^d \otimes \rr^d}^2 \d s \nonumber \\
\le& C \e \rho_{\sigma,\e}^2 T+ 2 C T\e \ee \left(\sup_{t \in [0,T]} | X_t^\e-X_t^0|^2 \right) +C a T\e.
\end{align}

For $J_3(t)$, by Burkholeder-Davis-Gundy's inequality, Young's inequality and \eqref{J5}, we have for any $\eta_3>0$
\begin{align}\label{J3}
\ee \left( \sup_{t \in [0,T]} J_3(t)\right)\le & C \sqrt{\e} \ee \left( \int_0^T  |X_s^\e -X_s^0|^2 \|\sigma_\e( X_s^\e, \LL_{X_s^\e}) \|_{\rr^d \otimes \rr^d}^2 \d s \right)^\frac{1}{2}\nonumber\\
\le & \eta_3 \ee \left( \sup_{t \in[0,T]} |X_t^\e -X_t^0|^2 \right)  +C \e \ee \int_0^T \|\sigma_\e( X_s^\e, \LL_{X_s^\e}) \|_{\rr^d \otimes \rr^d}^2 \d s \nonumber\\
\le&  C\e \rho_{\sigma,\e}^2 T+ (\eta_3+2 C T\e) \ee \left(\sup_{t \in [0,T]} | X_t^\e-X_t^0|^2 \right) +C a T\e.
\end{align}

For $J_6(t)$, by {\bf (H6)}, \eqref{ku}, \eqref{123bound}, \eqref{Gbound} and Remark \ref{WXY}, we have
\begin{align}\label{J6}
&\ee \left( \sup_{t \in [0,T]} J_6(t)\right)\nonumber\\
=& \e \ee \left(  \int_0^T \int_Z | G_\e (X_{s}^\e,\LL_{X_s^\e},z)  |^2  \nu(\d z) \d s \right) \nonumber\\
\le & C \e \ee \left( \int_0^T \int_Z | G_\e (X_{s}^\e,\LL_{X_s^\e},z)-G (X_{s}^\e,\LL_{X_s^\e},z)  |^2  \nu(\d z) \d s  \right) \nonumber\\
&+ C \e \ee \left( \int_0^T \int_Z | G (X_{s}^\e,\LL_{X_s^\e},z)-G (X_{s}^0,\LL_{X_s^0},z)  |^2  \nu(\d z) \d s \right)\nonumber \\
&+   C \e   \int_0^T \int_Z | G (X_{s}^0,\LL_{X_s^0},z)  |^2  \nu(\d z) \d s \nonumber\\
\le& C \e \rho_{G ,\e}^2 T \int_Z L_3^2(z) \nu(\d z) \nonumber \\
&+ C \e \ee \int_0^T \int_Z L_1^2(z) \left( \kappa \left(   | X_s^\e-X_s^0|^2 \right)   + \kappa \left( W_2^2( \LL_{X_s^\e}, \LL_{X_s^0})\right) \right) \nu(\d z) \d s\nonumber \\
&+ C \e  \int_0^T \int_Z | G (X_{s}^0,\LL_{X_s^0},z)  |^2  \nu(\d z) \d s \nonumber\\
\le & C \e \rho_{G ,\e}^2 T \int_Z L_3^2(z) \nu(\d z)\nonumber\\
&+ C \e T \ee  \left(\sup_{t \in [0,T]} | X_t^\e-X_t^0|^2 \right) \int_Z L_1^2(z) \nu (\d z) +C \e  \nonumber \\
\le & C \e \rho_{G ,\e}^2 T + C \e T \ee  \left(\sup_{t \in [0,T]} | X_t^\e-X_t^0|^2 \right) +C \e
\end{align}

For $J_4(t)$, by Burkholeder-Davis-Gundy's inequality, Young's inequality, \eqref{123bound} and \eqref{J6}, we have
for any $\eta_4>0$,
\begin{align}\label{J4}
\ee \left( \sup_{t \in [0,T]} J_4(t)\right)\le & C\e \ee  \left(\int_0^T \int_Z | X_{s-}^\e -X_{s-}^0|^2 | G_\e (X_{s-}^\e,\LL_{X_s^\e},z)|^{2} N^{\e^{-1}} (\d z,\d s) \right)^{\frac{1}{2}} \nonumber\\
\le&\eta_4 \ee \left(\sup_{t \in [0,T]} | X_t^\e-X_t^0|^2 \right)+C \e \ee \left(\int_0^T \int_Z  | G_\e (X_{s}^\e,\LL_{X_s^\e},z)|^{2} \nu(\d z)(\d s) \right)\nonumber\\
\le& \left( \eta_4+C \e T \int_Z L_1^2(z) \nu (\d z) \right) \ee  \left(\sup_{t \in [0,T]} | X_t^\e-X_t^0|^2 \right) \nonumber\\
&+C \e \rho_{G ,\e}^2 T \int_Z L_3^2(z) \nu(\d z)+C \e  \nonumber \\
\le & \left( \eta_4+C \e T  \right) \ee  \left(\sup_{t \in [0,T]} | X_t^\e-X_t^0|^2 \right) +C \e \rho_{G ,\e}^2 T +C \e.
\end{align}

Combining \eqref{J0}-\eqref{J4}, we have
\begin{align}\label{Jzong}
&\left(1-\eta_2 T -\eta_3-\eta_4-4CT\e -2C\e T \right) \ee  \left(\sup_{t \in [0,T]} | X_t^\e-X_t^0|^2 \right)\nonumber\\
\le &  4\int_0^T \ee \left( \kappa \left( \sup_{r \in [0,s]}| X_r^\e-X_r^0|^2 \right) \right) \d s +C \rho_{b,\e}^2 T +C\e \rho_{\sigma,\e}^2 T+C \e \rho_{G,\e}^2+C \e.
\end{align}

We can choose $\eta_2, \eta_3, \eta_4$ and $\e_0>0$ small enough such that, for any $\e \in (0,\e_0]$,
\beq\label{1/5}
1-\eta_2 T -\eta_3-\eta_4-4CT\e -2C\e T \ge C_0 \ge \frac{1}{5}.
\nneq
Hence, we obtain
\begin{align}\label{Jzong'}
&\frac{1}{5} \ee  \left(\sup_{t \in [0,T]} | X_t^\e-X_t^0|^2 \right)\nonumber\\
\le &  4\int_0^T \ee \left( \kappa \left( \sup_{r \in [0,s]}| X_r^\e-X_r^0|^2 \right) \right) \d s+C \rho_{b,\e}^2 T +C\e \rho_{\sigma,\e}^2 T+C \e \rho_{G,\e}^2+C \e\nonumber\\
=&:4\int_0^T \ee \left( \kappa \left( \sup_{r \in [0,s]}| X_r^\e-X_r^0|^2 \right) \right) \d s+O_2(\e),
\end{align}
where
\beq\label{O2}
O_2(\e) \to 0 \mbox{ as } \e \to 0.
\nneq

Setting $f(t)=\int_1^t \frac{1}{\kappa(s)} \d s $, it follows from Lemma \ref{Bihari} that
$$
\ee  \left(\sup_{t \in [0,T]} | X_t^\e-X_t^0|^2 \right) \le f^{-1}\left(  f(O_2(\e))+ 4T\right).
$$

Recalling the condition $ \int_{0+} \frac{1}{\kappa(s)} \d s=+\infty $, by \eqref{O2} we can conclude that
$$
f(O_2(\e))+ 4T \to -\infty \mbox{ as } \e \to 0.
$$
On the other hand, because $f$ is a strictly increasing function, then we obtain that $f$ has an inverse function which is strictly increasing, and $f^{-1}(-\infty)=0$. Thus,
$$
f^{-1}\left(  f(O_2(\e))+ 4T\right)\to 0 \mbox{ as  } \e \to 0.
$$
Hence, we have the desired result
\beq
\lim_{\e \to 0}\ee  \left(\sup_{t \in [0,T]} | X_t^\e-X_t^0|^2 \right)=0.
\nneq
\nprf

Next we will verify {\bf (LDP2)}.

\bprop
For any given $m \in (0,+\infty)$, let $\{u_\e=(\phi_\e, \psi_\e), \e \in (0,1] \} \subset  \SS_1^m \times \SS_2^m$. Then
\beq\label{LDP21}
\lim_{\e \to 0} \ee \left( \sup_{t \in [0,T]} |Z_t^{\e,u_\e} -\Gamma^0(u_\e)(t) |^2 \right)=0.
\nneq
\nprop

\bprf
Let $Y^{u_\e}$ be the solution of of \eqref{Yu} with $u$ replaced by $u_\e$, then $ \Gamma^0(u_\e)=Y^{u_\e}$. Note that
\begin{align}
&Z_t^{\e,u_\e}-Y_t^{u_\e}  \nonumber\\
=& -(K_t^{\e, u_\e}-K_t^{u_\e})+ \int_0^t \left( b_\e(Z_s^{\e,u_\e},\LL_{X_s^\e}) - b(Y_s^{u_\e}, \LL_{X_s^0})\right)\d s \nonumber\\
&+ \int_0^t \sqrt{\e}  \sigma_\e (Z_s^{\e,u_\e},\LL_{X_s^\e})\d W_s \nonumber\\
&+ \int_0^t \left( \sigma_\e (Z_s^{\e,u_\e},\LL_{X_s^\e})- \sigma (Y_s^{u_\e},\LL_{X_s^0})\right) \phi_\e(s) \d s \nonumber\\
&+\e \int_0^t \int_{Z} G_\e (Z_{s-}^{\e,u_\e},\LL_{X_s^\e},z)\tilde{N}^{\e^{-1} \psi_\e} (\d z,\d s) \nonumber\\
&+\int_0^t \int_Z \left( G_\e (Z_{s}^{\e,u_\e},\LL_{X_s^\e},z)- G (Y_{s}^{u_\e},\LL_{X_s^0},z)\right)(\psi_\e (s,z)-1) \nu(\d z) \d s.
\end{align}

By It\^{o}'s formula, we have
\begin{align}\label{M0}
&|Z_t^{\e,u_\e}-Y_t^{u_\e}|^2 \nonumber\\
=& -2 \int_0^t \langle Z_s^{\e,u_\e}-Y_s^{u_\e},\d K_s^{\e, u_\e}-\d K_s^{u_\e} \rangle \nonumber\\
&+ 2\int_0^t  \langle Z_s^{\e,u_\e}-Y_s^{u_\e} ,b_\e(Z_s^{\e,u_\e},\LL_{X_s^\e}) - b(Y_s^{u_\e}, \LL_{X_s^0}) \rangle\d s \nonumber\\
&+ 2 \sqrt{\e} \int_0^t \langle Z_s^{\e,u_\e}-Y_s^{u_\e}, \sigma_\e (Z_s^{\e,u_\e},\LL_{X_s^\e})\d W_s \rangle \nonumber\\
&+ 2 \int_0^t \langle Z_s^{\e,u_\e}-Y_s^{u_\e},\left( \sigma_\e (Z_s^{\e,u_\e},\LL_{X_s^\e})- \sigma (Y_s^{u_\e},\LL_{X_s^0})\right) \phi_\e(s) \rangle \d s \nonumber\\
&+2\e \int_0^t \int_{Z} \langle Z_{s-}^{\e,u_\e}-Y_{s-}^{u_\e},  G_\e (Z_{s-}^{\e,u_\e},\LL_{X_s^\e},z)\rangle \tilde{N}^{\e^{-1} \psi_\e } (\d z,\d s) \nonumber\\
&+2 \int_0^t \int_Z \langle Z_s^{\e,u_\e}-Y_s^{u_\e}, \left( G_\e (Z_{s}^{\e,u_\e},\LL_{X_s^\e},z)- G (Y_{s}^{u_\e},\LL_{X_s^0},z)\right)(\psi_\e (s,z)-1) \rangle \nu(\d z) \d s \nonumber\\
&+ \e \int_0^t \| \sigma_\e (Z_s^{\e,u_\e},\LL_{X_s^\e}) \|_{\rr^d \otimes \rr^d}^2 \d s \nonumber\\
&+ \e^2 \int_0^t \int_Z |G_\e (Z_{s}^{\e,u_\e},\LL_{X_s^\e},z) |^2 N^{\e^{-1} \psi_\e } (\d z,\d s) \nonumber\\
=&: M_1(t)+M_2(t)+M_3(t)+M_4(t)+M_5(t)+M_6(t)+M_7(t)+M_8(t).
\end{align}

For $M_1(t)$, by Definition \ref{solution}, we have
\beq
M_1(t)\le 0.
\nneq

For $M_2(t)$, similar to the proof of \eqref{J2}, by {\bf (H2)}, {\bf (H5)}, H\"{o}lder's inequality, Young's inequlity and Remark \ref{WXY}, we have for any $\eta_5>0$,
\begin{align}
&\sup_{t \in [0,T]} |M_2(t)| \nonumber\\
\le & 2\int_0^T  \langle Z_s^{\e,u_\e}-Y_s^{u_\e} ,b_\e(Z_s^{\e,u_\e},\LL_{X_s^\e}) - b(Z_s^{\e,u_\e},\LL_{X_s^\e}) \rangle\d s \nonumber\\
&+2\int_0^T  \langle Z_s^{\e,u_\e}-Y_s^{u_\e} ,b(Z_s^{\e,u_\e},\LL_{X_s^\e}) - b(Y_s^{u_\e}, \LL_{X_s^0}) \rangle\d s \nonumber\\
\le & 2 \rho_{b,\e} \int_0^T | Z_s^{\e,u_\e}-Y_s^{u_\e} | \d s +2 \int_0^T\left( \kappa \left(|Z_s^{\e,u_\e}-Y_s^{u_\e}|^2\right)+ \kappa \left(W_2 ( \LL_{X_s^\e},\LL_{X_s^0}) \right) \right) \d s \nonumber \\
\le & \eta_5  \int_0^T |Z_s^{\e,u_\e}-Y_s^{u_\e} |^2 \d s +2 \int_0^T  \kappa \left(|Z_s^{\e,u_\e}-Y_s^{u_\e}|^2 \right) \d s \nonumber\\
 &+2 \int_0^T  \kappa \left( \ee \left( |X_s^\e -X_s^0|^2\right)\right) \d s+C  \rho_{b,\e}^2 T\nonumber\\
\le & \eta_5  \int_0^T |Z_s^{\e,u_\e}-Y_s^{u_\e} |^2 \d s +2 \int_0^T  \kappa \left(|Z_s^{\e,u_\e}-Y_s^{u_\e}|^2 \right) \d s \nonumber\\
 &+2 \int_0^T \kappa \left( \ee \left(  |X_s^\e -X_s^0|^2\right) \right)\d s+ C \rho_{b,\e}^2 T.
\end{align}

Hence,
\begin{align}
E \left( \sup_{t \in [0,T]} |M_2(t)|\right) \le& \eta_5 T  \ee \left( \sup_{t\in [0,T]}|Z_t^{\e,u_\e}-Y_t^{u_\e} |^2 \right) \nonumber \\
&+2\ee \int_0^T \kappa \left( \sup_{r \in[0,s]} |Z_r^{\e,u_\e}-Y_r^{u_\e}|^2 \right) \d s \nonumber\\
&+2T\kappa \left(  \ee \left( \sup_{t \in[0,T]} |X_t^\e -X_t^0|^2\right) \right) + C \rho_{b,\e}^2 T.
\end{align}

For $M_3(t)$ and $M_7(t)$, by Burkholeder-Davis-Gundy's inequality and Young's inequality, for any $\eta_6>0$, we have
\begin{align}
&\ee \left(\sup_{t \in [0,T]} |M_3(t)|+ \sup_{t \in [0,T]} |M_7(t)|\right) \nonumber\\
\le & \eta_6 T \ee \left( \sup_{t\in [0,T]} |Z_t^{\e,u_\e} -Y_t^{u_\e} |^2  \right)+ C \e \ee \int_0^T \| \sigma_\e (Z_s^{\e,u_\e},\LL_{X_s^\e})\|_{\rr^d \otimes \rr^d}^2 \d s  \nonumber\\
\le & \eta_6 T \ee \left( \sup_{t\in [0,T]} |Z_t^{\e,u_\e} -Y_t^{u_\e} |^2  \right)+C \e \ee \int_0^T \| \sigma_\e (Z_s^{\e,u_\e},\LL_{X_s^\e})-\sigma (Z_s^{\e,u_\e},\LL_{X_s^\e}) \|_{\rr^d \otimes \rr^d}^2 \d s\nonumber\\
&+C \e \ee \int_0^T \| \sigma (Z_s^{\e,u_\e},\LL_{X_s^\e})-\sigma (Y_s^{u_\e},\LL_{X_s^0})\|_{\rr^d \otimes \rr^d}^2\d s\nonumber \\
&+C \e \ee \int_0^T \| \sigma (Y_s^{u_\e},\LL_{X_s^0}) \|_{\rr^d \otimes \rr^d}^2 \d s\nonumber\\
\le & \eta_6 T \ee \left( \sup_{t\in [0,T]} |Z_t^{\e,u_\e} -Y_t^{u_\e} |^2  \right)+ C \e \rho_{\sigma, \e }^2 T \nonumber \\
&+C\e \ee \int_0^T \kappa \left(  |Z_s^{\e,u_\e}-Y_s^{u_\e}|^2 +W_2^2(\LL_{X_s^\e} ,\LL_{X_s^0})\right) \d s \nonumber\\
&+C \e  \ee \int_0^T \| \sigma (Y_s^{u_\e},\LL_{X_s^0}) \|_{\rr^d \otimes \rr^d}^2 \d s \nonumber \\
\le & \eta_6 T \ee \left( \sup_{t\in [0,T]} |Z_t^{\e,u_\e} -Y_t^{u_\e} |^2  \right)+ C \e \rho_{\sigma, \e }^2 T \nonumber \\
&+Ca\e T\ee \left( \sup_{ t \in [0,T]}   |Z_t^{\e,u_\e}-Y_t^{u_\e}|^2 \right)   +Ca\e T \ee \left( \sup_{t \in[0,T]} |X_t^\e -X_t^0|^2 \right)+CaT\e.
\end{align}

For $M_4(t)$, by {\bf (H2)}, H\"{o}lder's inequality and Young's inequality, for any $\eta_7>0 $ we have
\begin{align}
&\ee \left(\sup_{t \in [0,T]}| M_4(t)| \right) \nonumber\\
\le & 2 \ee \int_0^T \langle   Z_s^{\e,u_\e}-Y_s^{u_\e},\left( \sigma_\e (Z_s^{\e,u_\e},\LL_{X_s^\e})- \sigma (Z_s^{\e,u_\e},\LL_{X_s^\e}) \right) \phi_\e(s)  \rangle \d s \nonumber\\
+& 2 \ee \int_0^T \langle   Z_s^{\e,u_\e}-Y_s^{u_\e},\left( \sigma (Z_s^{\e,u_\e},\LL_{X_s^\e})- \sigma (Y_s^{u_\e},\LL_{X_s^0})\right) \phi_\e(s)  \rangle \d s \nonumber\\
\le & 2 \rho_{\sigma, \e } \ee \int_0^T |Z_s^{\e,u_\e}-Y_s^{u_\e}| | \phi_\e (s)| \d s \nonumber\\
&+ 2 \ee \int_0^T |Z_s^{\e,u_\e}-Y_s^{u_\e} |\left( \kappa\left( |Z_s^{\e,u_\e}-Y_s^{u_\e}|^2 \right) + \kappa \left(W_2^2(\LL_{X_s^{\e}}, \LL_{X_s^0} ) \right) \right)^{\frac{1}{2}} |\phi_\e (s)| \d s \nonumber\\
\le &  CT\rho_{\sigma,\e} \ee \left( \sup_{t\in [0,T]} |Z_t^{\e,u_\e}-Y_t^{u_\e} |^2 \right)  + C\rho_{\sigma,\e } \int_0^T |\phi_\e (s) |^2 \d s \nonumber\\
&+ 2 \ee \left(  \sup_{t \in [0,T]}| Z_t^{\e,u_\e}-Y_t^{u_\e} |\int_0^T \kappa \left( | Z_s^{\e,u_\e}-Y_s^{u_\e} |^2  \right)^{\frac{1}{2}} | \phi_\e (s)| \d s \right) \nonumber\\
&+2 \ee \left( \sup_{t \in [0,T]}| Z_t^{\e,u_\e}-Y_t^{u_\e} |\int_0^T\left( \kappa \left(\ee \left( |X_s^\e -X_0^\e |^2\right)\right) \right)^{\frac{1}{2}} | \phi_\e (s)| \d s \right)\nonumber\\
\le & CT\rho_{\sigma,\e}  \ee \left( \sup_{t\in [0,T]} |Z_t^{\e,u_\e}-Y_t^{u_\e} |^2 \right) + m\rho_{\sigma,\e } \nonumber\\
&+\eta_7 \ee \left( \sup_{t\in [0,T]} |Z_t^{\e,u_\e}-Y_t^{u_\e} |^2 \right)+Cm \ee \int_0^T \kappa \left( \sup_{r \in[0,s]} | Z_r^{\e,u_\e}-Y_r^{u_\e} |^2 \right) \d s \nonumber \\
&+Cm \kappa \left( \ee \left( \sup_{t \in[0,T]}|X_t^\e -X_t^0 |^2 \right) \right).
\end{align}

For $M_5(t)$ and $M_8(t)$, by H\"{o}lder's inequality and Young's inequality, for any $\eta_8>0$,
\begin{align}
&\ee \left(\sup_{t \in [0,T]} |M_5(t)|+ \sup_{t \in [0,T]} |M_8(t)| \right) \nonumber\\
\le & C \e \ee \left( \int_0^T \int_{Z} | Z_{s-}^{\e,u_\e}-Y_{s-}^{u_\e}|^2 | G_\e (Z_{s-}^{\e,u_\e},\LL_{X_s^\e},z)|^2 N^{\e^{-1} \psi_\e } (\d z,\d s)  \right)^{\frac{1}{2}} \nonumber\\
&+ C\e \ee \left( \int_0^T \int_{Z} | G_\e (Z_{s-}^{\e,u_\e},\LL_{X_s^\e},z)|^2 |\psi_\e (s,z)| \nu(\d z) \d s \right)\nonumber\\
&\le \eta_8 \ee  \left( \sup_{t\in [0,T]} |Z_t^{\e,u_\e}-Y_t^{u_\e} |^2 \right)+C \e \ee \left( \int_0^T \int_{Z} | G_\e (Z_{s}^{\e,u_\e},\LL_{X_s^\e},z)|^2 |\psi_\e (s,z)| \nu(\d z) \d s \right),
\end{align}
where
\begin{align}
&C \e \ee \left( \int_0^T \int_{Z} | G_\e (Z_{s}^{\e,u_\e},\LL_{X_s^\e},z)|^2 |\psi_\e (s,z)| \nu(\d z) \d s \right) \nonumber\\
\le& C\e \ee \left( \int_0^T \int_{Z} | G_\e (Z_{s}^{\e,u_\e},\LL_{X_s^\e},z)-G (Z_{s}^{\e,u_\e},\LL_{X_s^\e},z)|^2 |\psi_\e (s,z)|\nu(\d z) \d s \right)\nonumber\\
&+C\e \ee \left( \int_0^T \int_{Z} | G (Z_{s}^{\e,u_\e},\LL_{X_s^\e},z)-G (Y_{s}^{u_\e},\LL_{X_s^0},z)|^2 |\psi_\e (s,z)| \nu(\d z) \d s \right)\nonumber\\
&+C\e \ee \left( \int_0^T \int_{Z} | G (Y_{s}^{u_\e},\LL_{X_s^0},z)|^2 |\psi_\e (s,z)| \nu(\d z) \d s \right)\nonumber\\
\le & C \e \rho_{G,\e}^2 T \Theta_m \nonumber\\
&+ C \e \ee \int_0^T \int_Z L_1^2 (z) \kappa\left( |Z_s^{\e,u_\e}-Y_s^{u_\e} |^2+W_2^2 (\LL_{X_s^\e},\LL_{X_s^0} )\right) |\psi_\e (s,z)|\nu(\d z) \d s \nonumber\\
&+C  \e \ee \left( \int_0^T \int_{Z} | G (Y_{s}^{u_\e},\LL_{X_s^0},z)|^2 |\psi_\e (s,z)|\nu(\d z) \d s \right) \nonumber\\
\le &  C  \e \rho_{G,\e}^2 T \Theta_m \nonumber\\
&+C  \e \Theta_m  \ee \left( \sup_{t \in[0,T]}|Z_t^{\e,u_\e}-Y_t^{u_\e} |^2 \right) + C \e \Theta_m  \ee \left( \sup_{t \in[0,T]}|X_t^\e -X_t^0 |^2 \right) \nonumber\\
&+C\e \Theta_m \ee \left( \sup_{u_\e \in S_1^m \times S_2^m}\sup_{t \in [0,T]} | Y_t^{u_\e} | + \sup_{t \in [0,T]}
|X_t^0| \right)+ C  \e \Theta_m \nonumber\\
\le &  C  \e \rho_{G,\e}^2 T \Theta_m +C  \e \Theta_m  \ee \left( \sup_{t \in[0,T]}|Z_t^{\e,u_\e}-Y_t^{u_\e} |^2 \right)\nonumber\\
& + C \e \Theta_m  \ee \left( \sup_{t \in[0,T]}|X_t^\e -X_t^0 |^2 \right)  +C  \e \Theta_m.
\end{align}
Here
$$
\Theta_{m}=\sup_{\psi \in S_2^m} \int_0^T \int_Z \left(  L_1^2(z)+L_2^2(z)+L_3^2(z)\right)\left(\psi(s,z)+1 \right) \nu(\d z) \d s <+\infty.
$$

For $M_6(t)$, by {\bf (H6)}, \eqref{L} and Young's inequality, for any $\eta_9>0$ we have
\begin{align}\label{M6}
&\ee \left(\sup_{t \in [0,T]} |M_6(t)| \right) \nonumber\\
\le & 2 \ee  \int_0^T \int_Z | Z_s^{\e,u_\e}-Y_s^{u_\e}| |  G_\e (Z_{s}^{\e,u_\e},\LL_{X_s^\e},z)-G (Z_{s}^{\e,u_\e},\LL_{X_s^\e},z) | |\psi_\e (s,z)-1|  \nu(\d z) \d s \nonumber\\
&+2 \ee  \int_0^T \int_Z | Z_s^{\e,u_\e}-Y_s^{u_\e}| |  G (Z_{s}^{\e,u_\e},\LL_{X_s^\e},z)- G (Y_{s}^{u_\e},\LL_{X_s^0},z)| |\psi_\e (s,z)-1|  \nu(\d z) \d s \nonumber\\
\le & 2 \ee \int_0^T \int_Z  \rho_{G,\e} L_3(z)| Z_s^{\e,u_\e}-Y_s^{u_\e}| |\psi_\e (s,z)-1|  \nu(\d z) \d s \nonumber\\
&+ 2 \ee \int_0^T \int_Z   L_1(z)| Z_s^{\e,u_\e}-Y_s^{u_\e}| \left( \kappa \left( | Z_s^{\e,u_\e}-Y_s^{u_\e}|^2\right)+\kappa \left(W_2^2(X_s^\e, X_s^0)\right) \right)^{\frac{1}{2}} |\psi_\e (s,z)-1|  \nu(\d z) \d s \nonumber\\
\le &  \eta_9\ee \int_0^T \int_Z | Z_s^{\e,u_\e}-Y_s^{u_\e}|^2 L_3(z) |\psi_\e (s,z)-1|  \nu(\d z) \d s \nonumber\\
&+C \int_0^T \int_Z  \rho_{G,\e}^2  L_3(z) |\psi_\e (s,z)-1|  \nu(\d z) \d s \nonumber\\
&+ \eta_9 \ee \int_0^T \int_Z | Z_s^{\e,u_\e}-Y_s^{u_\e}|^2 L_1(z) |\psi_\e (s,z)-1|  \nu(\d z) \d s\nonumber \\
&+C \ee \int_0^T \int_Z \left(\kappa \left( | Z_s^{\e,u_\e}-Y_s^{u_\e}|^2\right)+ \kappa \left(W_2^2(X_s^\e, X_s^0)\right) \right)L_1(z)|\psi_\e (s,z)-1|  \nu(\d z) \d s\nonumber \\
\le & \eta_9 \ee \left( \sup_{t \in [0,T]} | Z_t^{\e,u_\e}-Y_t^{u_\e} |^2 \right) \int_0^T \int_Z  L_3(z) |\psi_\e (s,z)-1|  \nu(\d z) \d s  \nonumber \\
&+C \rho_{G,\e}^2 \int_0^T \int_Z  L_3(z) |\psi_\e (s,z)-1|  \nu(\d z) \d s \nonumber\\
& +\eta_9 \ee \left( \sup_{t \in [0,T]} | Z_t^{\e,u_\e}-Y_t^{u_\e} |^2 \right) \int_0^T \int_Z  L_1(z) |\psi_\e (s,z)-1|  \nu(\d z) \d s  \nonumber \\
&+C \ee \int_0^T \int_Z \kappa \left( | Z_s^{\e,u_\e}-Y_s^{u_\e}|^2\right)L_1(z)|\psi_\e (s,z)-1|  \nu(\d z) \d s\nonumber \\
&+C  \int_0^T \int_Z \kappa \left( \ee \left( |X_s^\e -X_s^0|^2 \right) \right)L_1(z)|\psi_\e (s,z)-1|  \nu(\d z) \d s \nonumber \\
\le& C\eta_9 \ee \left( \sup_{t \in [0,T]} | Z_t^{\e,u_\e}-Y_t^{u_\e} |^2 \right)+C \rho_{G,\e}^2 +C\kappa\left( \ee \left( \sup_{t \in [0,T]}|X_t^\e -X_t^0|^2 \right) \right) \nonumber \\
&+C \int_0^T \int_Z
\ee \left( \kappa \left( \sup_{ r \in [0,s]} | Z_r^{\e,u_\e}-Y_r^{u_\e}|^2\right) \right)L_1(z)|\psi_\e (s,z)-1|  \nu(\d z) \d s.
\end{align}

Combining \eqref{M0}-\eqref{M6},
\begin{align}
&\left(1-\eta_5 T -\eta_6T-\eta_7-\eta_8-C\eta_9-(CaT+C \Theta_m) \e -CT \rho_{ \sigma, \e} \right) \ee  \left(\sup_{t \in [0,T]} | Z_t^{\e,u_\e}-Y_t^{u_\e}|^2 \right)\nonumber\\
\le & \ee \left(\int_0^T\left(C+\int_ZL_1(z) |\psi_\e(s,z)-1| \nu(\d z)\right) \kappa \left(\sup_{r \in [0,s]}| Z_r^{\e,u_\e}-Y_r^{u_\e}|^2\right) \d s\right)\nonumber\\
&+C \kappa\left( \ee \left( \sup_{t \in [0,T]}|X_t^\e -X_t^0|^2 \right) \right)+C\e \ee \left( \sup_{t \in [0,T]}|X_t^\e -X_t^0|^2 \right)\nonumber\\
&+C\left( \e +\rho_{b,\e}^2+ \e \rho_{\sigma,\e}^2+\rho_{\sigma,\e}+\e \rho_{G,\e}^2+\rho_{G,\e}^2 \right)\nonumber\\
=& : \int_0^T\left(C+\int_ZL_1(z) |\psi_\e(s,z)-1| \nu(\d z)\right) \ee \left( \kappa \left(\sup_{r \in [0,s]}| Z_r^{\e,u_\e}-Y_r^{u_\e}|^2\right) \right)\d s+O_3(\e).
\end{align}

Similar to the proof of Lemma \ref{LDPlem}, we can choose $\eta_5$-$\eta_9$ and $\e_0$ small enough such that for some constant $C_0 \ge \frac{1}{5} $ and any $\e \in (0,\e_0]$,
$$
1-\eta_5 T -\eta_6T-\eta_7-\eta_8-C\eta_9-(CaT+C \Theta_m) \e -CT \rho_{ \sigma, \e} \ge C_0 \ge \frac{1}{5}.
$$
Hence, we have
\begin{align}
&\frac{1}{5} \ee  \left(\sup_{t \in [0,T]} | Z_t^{\e,u_\e}-Y_t^{u_\e}|^2 \right) \nonumber\\
\le & \int_0^T\left(C+\int_ZL_1(z) |\psi_\e(s,z)-1| \nu(\d z)\right) \ee \left( \kappa \left(\sup_{r \in [0,s]}| Z_r^{\e,u_\e}-Y_r^{u_\e}|^2\right) \right)\d s+O_3(\e).
\end{align}
By {\bf (H5)}, {\bf (H6)} and Lemma \ref{LDPlem}, we have
\beq\label{O3}
\lim_{\e \to 0} O_3(\e)=0.
\nneq

Setting $f(t)=\int_1^t \frac{1}{\kappa(s)} \d s $, it follows from Lemma \ref{Bihari} that
$$
\ee  \left(\sup_{t \in [0,T]} | Z_t^{\e,u_\e}-Y_t^{u_\e}|^2 \right) \le f^{-1}\left(  f(O_3(\e))+ \int_0^T \left(C+\int_ZL_1(z) |\psi_\e(s,z)-1| \nu(\d z)\right) \d s\right).
$$
By \eqref{L1}, we have
$$
\int_0^T \left(C+\int_ZL_1(z) |\psi_\e(s,z)-1| \nu(\d z)\right) \d s < +\infty.
$$
Recalling the condition $ \int_{0+} \frac{1}{\kappa(s)} \d s=+\infty $, by \eqref{O3} we can conclude that
$$
f(O_3(\e))+ \int_0^T \left(C+\int_ZL_1(z) |\psi_\e(s,z)-1| \nu(\d z)\right) \d s \to -\infty \mbox{ as } \e \to 0.
$$
On the other hand, because $f$ is a strictly increasing function, then we obtain that $f$ has an inverse function which is strictly increasing, and $f^{-1}(-\infty)=0$. Thus,
$$
f^{-1}\left(  f(O_3(\e))+ \int_0^T \left(C+\int_ZL_1(z) |\psi_\e(s,z)-1| \nu(\d z)\right) \d s\right) \to 0 \mbox{ as  } \e \to 0.
$$
Hence, we get the desired result
$$
\lim_{\e \to 0} \ee \left( \sup_{t \in [0,T]} |Z_t^{\e,u_\e}-Y_t^{u_\e} |^2 \right)=0,
$$
which completes the proof.

\nprf

\subsection{Proof of MDP1 and MDP2}

In order to verify MDP1, we need the following Proposition.

\bprop
For any given $ m \in (0, +\infty)$, let $u_n=(\phi_n,\varphi_n)$, $ n \in \nn$,
$u=(\phi,\varphi) \in S_1^m \times B_2(m) $ be such that $u_n \to u  $ in $ S_1^m \times B_2(m)$ as $n \to +\infty$, then
$$
\lim_{n \to +\infty} \sup_{t \in [0,T]} |\Upsilon^0(u_n)(t) - \Upsilon^0(u)(t)|=0.
$$
\nprop

\bprf
Recall that
$$
V_t^u=\Upsilon^0(u)(t),\quad V_t^{u_n}=\Upsilon^0(u_n)(t)
$$
are the correspond solution to  \eqref{mdp1}. We only need to proof the following result
\beq
\lim_{n \to +\infty} \sup_{t \in [0,T]} |V_t^{u_n} - V_t^u|=0.
\nneq

The proof is similar to the proof of Proposition \ref{LDP1p}, by the It\^{o}'s formula we can get the result. So we omit the tedious proofs here.

%

\nprf

In order to verify (MDP2), we need the following three lemmas. The first one is taken
from Lemma 4.2, Lemma 4.3 and Lemma 4.7 in \cite{BDG16}.

\begin{lemma}\label{L123BOUND}
Fix $x\in (0,+\infty)$.

\noindent (a) \quad There exists $\zeta_m \in (0,+\infty)$ such that for all $I\in \BB([0,T])$ and $\epsilon \in (0,+\infty),$
\begin{align}\label{LL21}
\sup_{\psi\in S_{+,\epsilon}^m}\int_{Z\times I}\big(L_1^2(y)+L_2^2(y)+L_3^2(y)\big)\psi(y,s)\nu({\rm d}y){\rm d}s \leq \zeta_m(a^2(\epsilon)+Leb_T(I)).
\end{align}

\noindent (b) \quad There exists ${\rm \Gamma}_m, \rho_m :  (0,+\infty)\rightarrow  (0,+\infty)$ such that ${\rm \Gamma}_m(s)\downarrow 0$ as $s\uparrow +\infty$, and for all $I\in \BB([0,T])$ and $\epsilon,\beta \in (0,+\infty),$
\begin{align}\label{LL22}
&\sup_{\varphi\in S_{\epsilon}^m}\int_{Z\times I}\big(L_1(z)+L_2(z)+L_3(z)\big)|\varphi(y,s)|1_{\{|\varphi|\geq \beta/a(\epsilon)\}}(y,s)\nu({\rm d}y){\rm d}s \nonumber\\
&\leq {\rm \Gamma}_m(\beta)(1+\sqrt{Leb_T(I)}),
\end{align}

and

\begin{align}\label{LL23}
&\sup_{\varphi\in S_{\epsilon}^m}\int_{Z\times I}\big(L_1(z)+L_2(z)+L_3(z)\big)|\varphi(y,s)|\nu({\rm d}y){\rm d}s \nonumber\\
&\leq \rho_m(\beta) \sqrt{Leb_T(I)}+{\rm \Gamma}_m(\beta)a(\epsilon).
\end{align}

\noindent (c) \quad For any $\beta>0$,
\begin{align}\label{LL24}
\lim_{\epsilon\rightarrow 0}\sup_{\varphi\in S_{\epsilon}^m}\int_{Z\times [0,T]}\big(L_1(z)+L_2(z)+L_3(z)\big)|\varphi(y,s)|1_{\{|\varphi|\geq \beta/a(\epsilon)\}}(y,s)\nu({\rm d}y){\rm d}s
=0.
\end{align}
\end{lemma}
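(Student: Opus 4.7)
Since this lemma is cited as Lemmas 4.2, 4.3 and 4.7 of \cite{BDG16}, the plan is to reproduce the standard argument there. The engine is the Fenchel--Young inequality
\[
xy \le \frac{1}{c}\bigl[l(y) + e^{cx}-1\bigr], \qquad x,y\ge 0,\ c>0,
\]
paired with the entropy budget $Q_2(\psi) \le m\,a^2(\epsilon)$ for $\psi\in S_{+,\epsilon}^m$ and the super-exponential integrability built into $\mathcal{H}$. A preliminary step is the observation that $L_i\in\mathcal{H}\cap L^2(\nu)$ implies $\int_Z(e^{cL_i^\alpha}-1)\,\nu(dz)<\infty$ for $\alpha\in\{1,2\}$ and some $c>0$: Chebyshev gives $\nu(L_i>M)\le\|L_i\|_{L^2(\nu)}^2/M^2<\infty$ for every $M$, so exponential integrability on this finite-measure set follows from the very definition of $\mathcal{H}$, while on $\{L_i\le M\}$ the elementary bound $e^{cL_i^\alpha}-1\le C(c,M)L_i^\alpha$ together with $L_i\in L^2(\nu)$ finishes the job.

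Part (a) is then immediate: apply the Fenchel--Young inequality pointwise with $x=cL_i^2(z)$, $y=\psi(s,z)$, integrate over $Z\times I$, and plug in $\int_{Z\times I}l(\psi)\,\nu(dz)\,ds\le m\,a^2(\epsilon)$ together with the preliminary bound. Summing over $i\in\{1,2,3\}$ yields the $\zeta_m(a^2(\epsilon)+Leb_T(I))$ estimate.

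For part (b), the identity $\{|\varphi|\ge\beta/a(\epsilon)\}=\{|\psi-1|\ge\beta\}$ (where $\varphi=(\psi-1)/a(\epsilon)$) reduces the task to controlling $L_i|\psi-1|/a(\epsilon)$ on that set. The key monotonicity is that $h(\psi):=l(\psi)/(\psi-1)$ is strictly increasing on $(1,\infty)$: one checks $h'(\psi)=(\psi-1-\log\psi)/(\psi-1)^2>0$. Consequently $\psi-1\le(\beta/l(1+\beta))\,l(\psi)$ on $\{\psi\ge 1+\beta\}$, which combined with the Fenchel--Young inequality (applied to $L_i$ rather than $L_i^2$) and the entropy budget produces a constant $\Gamma_m(\beta)$ of the right order, vanishing as $\beta\uparrow\infty$. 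The piece $\{\psi\le 1-\beta\}$ (nonempty only for $\beta<1$) has $\nu\otimes ds$-measure at most $m\,a^2(\epsilon)/l(1-\beta)$ by Chebyshev on $l(\psi)$, and $|\psi-1|\le 1$ there, so a Cauchy--Schwarz step against $L_i\in L^2(\nu)$ controls it by $C\sqrt{Leb_T(I)}$ times a function of $\beta$. Dropping the indicator adds an $O(a(\epsilon))$ contribution, yielding the companion bound with $\rho_m$.

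The main obstacle is part (c), because the uniform-in-$\epsilon$ bound of (b) does not itself decay with $\epsilon$ once $\beta$ is fixed. The plan is a two-parameter splitting based simultaneously on the size of $\psi$ and the size of $L_i$: on the ``moderate'' piece $\{L_i\le M\}$, the measure estimate $\nu\otimes ds(\{|\psi-1|\ge\beta\})\le m\,a^2(\epsilon)/\min(l(1+\beta),l(1-\beta))$ combined with the entropy inequality (at a fixed scale depending only on $M$) makes the contribution $O(a(\epsilon))$, and hence vanishing as $\epsilon\to 0$; on the tail piece $\{L_i>M\}$, the super-exponential integrability in $\mathcal{H}$ lets one bound the contribution uniformly in $\epsilon$ by an $M$-dependent quantity that goes to zero as $M\to\infty$, via dominated convergence on the finite-$\nu$-measure set $\{L_i>M\}$. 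Sending first $\epsilon\to 0$ and then $M\to\infty$ produces the claimed limit.
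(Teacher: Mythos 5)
This lemma is not proved in the paper at all: the sentence immediately preceding it states that it ``is taken from Lemma 4.2, Lemma 4.3 and Lemma 4.7 in \cite{BDG16},'' and the authors simply cite it. So there is no in-paper argument to compare against; what you have written is a from-scratch reconstruction. Your blueprint (Fenchel--Young duality $l \leftrightarrow e^{t}-1$ against the entropy budget $Q_2(\psi)\le m a^2(\epsilon)$, the monotonicity of $\psi\mapsto l(\psi)/(\psi-1)$ on $(1,\infty)$, and a two-parameter truncation for part~(c)) is indeed the BDG16 strategy, and part~(a) as you have laid it out is correct.

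There is, however, a genuine gap in the preliminary step for $\alpha=1$, and it propagates into part~(b). You claim $L_i\in\mathcal{H}\cap L^2(\nu)$ forces $\int_Z\big(e^{cL_i}-1\big)\,\nu(\mathrm{d}z)<\infty$, arguing that on $\{L_i\le M\}$ the bound $e^{cL_i}-1\le C(c,M)L_i$ ``together with $L_i\in L^2(\nu)$ finishes the job.'' But $L^2(\nu)$-membership does \emph{not} yield $L^1(\nu)$-membership when $\nu(Z)=+\infty$ (take $Z=[1,\infty)$ with Lebesgue $\nu$ and $L_i(z)=1/z$, which lies in $\mathcal{H}\cap L^2(\nu)$ but not $L^1(\nu)$); and since $e^{cL_i}-1\ge cL_i$, in that case $\int_Z(e^{cL_i}-1)\,\nu=+\infty$. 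Your $\alpha=2$ reasoning is fine precisely because $e^{cL_i^2}-1\le C(c,M)L_i^2$ is controlled by the $L^2$-norm, but the $\alpha=1$ version is not. Because you invoke Fenchel--Young ``applied to $L_i$ rather than $L_i^2$'' in part~(b), the resulting term $\int_{Z\times I}(e^{cL_i}-1)\,\nu\,\mathrm{d}s$ is not finite under the stated hypotheses, so the argument as written does not close. The repair is to avoid testing against $e^{cL_i}$ globally: the indicator confines the integral to the finite-measure set $\{|\psi-1|\ge\beta\}$ (measure $\lesssim m a^2(\epsilon)/l(1+\beta)$ by a Chebyshev-in-entropy estimate), and on that set one should combine the monotonicity of $l(\psi)/(\psi-1)$ with a truncation in $L_i$ (or Cauchy--Schwarz against $L_i\in L^2(\nu)$) so that only $\mathcal{H}$ (which is a statement about $e^{cL_i^2}$) and the $L^2$-norm of $L_i$ are ever used --- which is how the cited argument in \cite{BDG16} is structured.
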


\blem\label{lipbound}
Under {\bf (H1)}, {\bf (H2)'},{\bf (H3)'}, {\bf (H5)} and {\bf (H6)'}, there exists some constant $\e_1>0$ and a positive constant $C_T$ independent of $\e$ such that for any $\e \in (0,\e_1]$,
\beq\label{Lipbound}
\ee  \left(\sup_{t \in [0,T]} | X_t^\e-X_t^0|^2 \right)\le C_{T} \left( \e + \rho_{b,\e}^2+\e \rho_{\sigma,\e}^2+\e \rho_{G,\e}^2  \right).
\nneq
\nlem

\bprf
By It\^{o}'s formula, using the similar proof to Lemma \ref{LDPlem},
we have
\begin{align}
&\ee  \left(\sup_{t \in [0,T]} | X_t^\e-X_t^0|^2 \right)\nonumber\\
\le &  C\int_0^T \ee \left(  \sup_{r \in [0,s]}| X_r^\e-X_r^0|^2  \right) \d s +C\left( \rho_{b,\e}^2 T +\e \rho_{\sigma,\e}^2 T+ \e \rho_{G,\e}^2+ \e \right).
\end{align}
By Gronwall's inequality, there exists some constant $\e_1>0$ and $C_T>0$ such that for any $\e \in (0,\e_1]$,
\beq
\ee  \left(\sup_{t \in [0,T]} | X_t^\e-X_t^0|^2 \right)\le C_{T} \left( \e + \rho_{b,\e}^2+\e \rho_{\sigma,\e}^2+\e \rho_{G,\e}^2  \right).
\nneq

\nprf

\blem\label{Mbound}
Let $M^{\e, u_{\e}}$ be the solution to \eqref{mdp2m}. Then there exists some $\kappa_0>0$ such that
\beq
\sup_{\e \in (0,\kappa_0]} \ee \left( \sup_{t \in [0,t]}|M^{\e,u_\e}_t|^2 \right) <+\infty.
\nneq
\nlem

\bprf
By It\^{o} formula, we have for any $t \in [0,T]$,
\begin{align}\label{MI}
|M_t^{\e, u_\e}|^2=&\frac{2}{\lambda (\e)} \int_0^t \langle b_\e(\lambda(\e)M^{\e,u_{\e}}_s+X_s^0,\mathcal{L}_{X_s^\e})-b(X_s^0,\mathcal{L}_{X_s^0}), M_s^{\e, u_\e}\rangle \d s \nonumber\\
&\frac{2 \sqrt{\e}}{\lambda(\e)}\int_0^t \langle  M_s^{\e, u_\e},\sigma_{\e}(\lambda(\e)M^{\e,u_{\e}}_s+X^0_s,\mathcal{L}_{X_s^\e}) \d W_s \rangle \nonumber\\
&+2 \int_0^t \langle \sigma_{\e}(\lambda(\e)M^{\e,u_{\e}}_s+X^0_s,\mathcal{L}_{X_s^\e})\phi_{\e}(s), M_s^{\e, u_\e} \rangle \d s \nonumber\\
& -2 \int_0^t \langle M_s^{\e, u_\e}, \mathrm{d} \hat{K}^{\e,u_\e}_s\rangle\nonumber\\
&+\frac{2\e}{\lambda(\e)} \int_0^t \int_Z \langle G_{\e}(\lambda(\e)M^{\e,u_{\e}}_{s-}+X^0_{s-},\mathcal{L}_{X_s^\e},z), M_s^{\e, u_\e} \rangle \tilde{N}^{\e^{-1}\psi_\e} (\d z, \d s)\nonumber\\
&+\frac{2}{\lambda(\e)} \int_0^t \int_Z \langle  G_{\e}(\lambda(\e)M^{\e,u_{\e}}_{s}+X^0_{s},\mathcal{L}_{X_s^\e},z )(\psi_\e(s,z)-1), M_s^{\e, u_\e} \rangle \nu( \d z) \d s \nonumber\\
&+\frac{\e}{\lambda^2(\e)}\int_0^t \|\sigma_{\e}(\lambda(\e)M^{\e,u_{\e}}_s+X^0_s,\mathcal{L}_{X_s^\e}) \|_{\rr^d \otimes \rr^d}^2 \d s \nonumber\\
&+\frac{\e^2}{\lambda^2(\e)} \int_0^t \int_Z | G_{\e}(\lambda(\e)M^{\e,u_{\e}}_{s-}+X^0_{s-},\mathcal{L}_{X_s^\e},z) |^2 N^{\e^{-1}\psi_\e} (\d z, \d s) \nonumber\\
=&:I_1(t)+I_2(t)+I_3(t)+I_4(t)+I_5(t)+I_6(t)+I_7(t)+I_8(t).
\end{align}

By equation \eqref{Lambda}, {\bf (H5)}, {\bf (H6)'} and {\bf (C2)} imply that there exists some constant $\e_2 >0$ such that
\beq\label{e2}
\frac{\e }{\lambda^2(\e)}\vee \ \lambda(\e) \vee \rho_{b,\e}\vee\rho_{\sigma,\e} \vee \rho_{G,\e} \vee \frac{\rho_{b,\e}}{\lambda(\e)} \in (0,\frac{1}{2}], \quad \forall \e \in (0, \e_2].
\nneq

Now we set
\beq\label{e3}
\e_3 = \e_1 \wedge \e_2 \wedge \frac{1}{2},
\nneq
where $\e_1$ is the same in Lemma \ref{lipbound}.

In the following proof, denote by $C$ a generic constant which may be change from line to line and is independent of $\e$.

By {\bf (H2)'}, {\bf (H5)}, \eqref{Lipbound} and Young's inequality, for any $\e \in (0,\e_3]$,
\begin{align}\label{MI1}
I_1(t)=&\frac{2}{\lambda (\e)} \int_0^t \langle b_\e(\lambda(\e)M^{\e,u_{\e}}_s+X_s^0,\mathcal{L}_{X_s^\e})-b(X_s^0,\mathcal{L}_{X_s^0}), M_s^{\e, u_\e}\rangle \d s \nonumber\\
=&\frac{2}{\lambda (\e)} \int_0^t \langle b_\e(\lambda(\e)M^{\e,u_{\e}}_s+X_s^0,\mathcal{L}_{X_s^\e})-b(\lambda(\e)M^{\e,u_{\e}}_s+X_s^0,\mathcal{L}_{X_s^\e}), M_s^{\e, u_\e}\rangle \d s \nonumber\\
&+\frac{2}{\lambda (\e)} \int_0^t \langle b(\lambda(\e)M^{\e,u_{\e}}_s+X_s^0,\mathcal{L}_{X_s^\e})-b(\lambda(\e)M^{\e,u_{\e}}_s+X_s^0,\mathcal{L}_{X_s^0}), M_s^{\e, u_\e}\rangle \d s \nonumber\\
&+\frac{2}{\lambda (\e)} \int_0^t \langle b(\lambda(\e)M^{\e,u_{\e}}_s+X_s^0,\mathcal{L}_{X_s^0})-b(X_s^0,\mathcal{L}_{X_s^0}), M_s^{\e, u_\e}\rangle \d s \nonumber\\
\le& \frac{2 \rho_{b,\e}}{ \lambda(\e)} \int_0^t |M_s^{\e, u_\e}| \d s + \frac{2}{ \lambda(\e)} L \int_0^t  W_2(\mathcal{L}_{X_s^\e},\mathcal{L}_{X_s^0}) |M_s^{\e, u_\e} | \d s \nonumber\\
&+2L \int_0^t |M_s^{\e, u_\e} |^2 \d s \nonumber\\
\le& \frac{CL( \rho_{b,\e}+\sqrt{\e})}{ \lambda(\e)} \int_0^t |M_s^{\e, u_\e}| \d s +2L \int_0^t |M_s^{\e, u_\e} |^2 \d s\nonumber\\
\le&  C  \int_0^t |M_s^{\e, u_\e} |^2 \d s+C.
\end{align}

By {\bf (H2)'}, {\bf (H5)}, \eqref{Lipbound}, \eqref{Gbound}, H\"{o}lder's inequality and Young's inequality, for any $\e \in (0,\e_3]$,
\begin{align}\label{MI3}
I_3(t)=& 2 \int_0^t \langle \sigma_{\e}(\lambda(\e)M^{\e,u_{\e}}_s+X^0_s,\mathcal{L}_{X_s^\e})\phi_{\e}(s), M_s^{\e, u_\e} \rangle \d s \nonumber\\
=& 2 \int_0^t \left\langle   \left( \sigma_{\e}(\lambda(\e)M^{\e,u_{\e}}_s+X^0_s,\mathcal{L}_{X_s^\e})-
\sigma(\lambda(\e)M^{\e,u_{\e}}_s+X^0_s,\mathcal{L}_{X_s^\e}) \right)\phi_{\e}(s), M_s^{\e, u_\e}   \right\rangle \d s \nonumber\\
&+2 \int_0^t \left\langle   \left( \sigma(\lambda(\e)M^{\e,u_{\e}}_s+X^0_s,\mathcal{L}_{X_s^\e})-
\sigma(X^0_s,\mathcal{L}_{X_s^0}) \right)\phi_{\e}(s), M_s^{\e, u_\e}   \right\rangle \d s \nonumber\\
&+2 \int_0^t \left\langle \sigma(X^0_s,\mathcal{L}_{X_s^0})\phi_{\e}(s), M_s^{\e, u_\e}   \right\rangle \d s \nonumber\\
\le & 2 \rho_{\sigma,\e} \int_0^t |M_s^{\e, u_\e}| |\phi_{\e}(s) | \d s \nonumber \\
&+2 L \int_0^t \left(\lambda(\e) |M_s^{\e, u_\e}| + W_2(\LL_{X_s^\e}, \LL_{X_s^0})  \right) |M_s^{\e, u_\e}||\phi_{\e}(s) | \d s \nonumber \\
&+2 \int_0^t \| \sigma(X_s^0, \LL_{X_s^0})\|_{\rr^d \otimes \rr^d} |M_s^{\e, u_\e}||\phi_{\e}(s) | \d s \nonumber \\
\le & 2L\lambda(\e) \int_0^t |M_s^{\e, u_\e}|^2  |\phi_{\e}(s) | \d s \nonumber \\
&+ \left( 2 \rho_{\sigma,\e} + 2L \ee \left( \sup_{s \in[0,T]} |X_s^\e -X_s^0|^2\right)^{\frac{1}{2}}\right)
\int_0^t |M_s^{\e, u_\e}||\phi_{\e}(s) | \d s \nonumber \\
&+2 \int_0^t \| \sigma(X_s^0, \LL_{X_s^0})\|_{\rr^d \otimes \rr^d} |M_s^{\e, u_\e}||\phi_{\e}(s) | \d s \nonumber \\
\le& C \left[ \int_0^t  |M_s^{\e, u_\e}|^2  \d s  + \int_0^t |M_s^{\e, u_\e}|^2 |\phi_{\e}(s) |^2 \d s \right] \nonumber\\
&+C \left[ \int_0^t |M_s^{\e, u_\e}|^2 \d s+ \int_0^s  |\phi_{\e}(s) |^2 \d s \right] \nonumber \\
&+ C \left[  \int_0^t \|  \sigma(X_s^0, \LL_{X_s^0})  \|_{\rr^d \otimes \rr^d}^2 \d s + \int_0^t |M_s^{\e, u_\e}|^2 |\phi_{\e}(s) |^2 \d s \right] \nonumber \\
\le & C \int_0^t |M_s^{\e, u_\e}|^2 \left( |\phi_{\e}(s) |^2+1 \right) \d s + C\int_0^t \|  \sigma(X_s^0, \LL_{X_s^0})  \|_{\rr^d \otimes \rr^d}^2 \d s + C \int_0^t |\phi_{\e}(s) |^2 \d s \nonumber \\
\le & C \int_0^t |M_s^{\e, u_\e}|^2 \left( |\phi_{\e}(s) |^2+1 \right) \d s +C.
\end{align}
The last inequality holds by \eqref{Gbound} and $\phi_\e \in \SS_1^m$.

For $I_4(t)$, recall the definition of $M_t^{\e,u_\e}$, since $A$ is monotone, by Lemma \ref{lem24} and H\"{o}lder's inequality, we have
\begin{align}\label{MI4}
\sup_{t \in [0,T]} I_4(t) =&\sup_{t \in [0,T]} \left(-2 \int_0^t \langle M_s^{\e, u_\e}, \d \hat{K}_s^{\e,u_\e}   \rangle \right)\nonumber\\
\le&\sup_{t \in [0,T]}\left(-|\hat{K}^{\e,u_\e}|_0^t \right) + C\int_0^T | M_s^{\e, u_\e} | \d s +C  \nonumber \\
\le& C \int_0^T \sup_{r\in[0,s]} |M_r^{\e, u_\e}|^2 \d s +C.
\end{align}

For $I_7(t)$, we have
\begin{align}\label{MI7}
I_7(t)=&\frac{\e}{\lambda^2(\e)}\int_0^t \|\sigma_{\e}(\lambda(\e)M^{\e,u_{\e}}_s+X^0_s,\mathcal{L}_{X_s^\e}) \|_{\rr^d \otimes \rr^d}^2 \d s \nonumber\\
\le& \frac{\e}{\lambda^2(\e)} \int_0^t \| \sigma_{\e}(\lambda(\e)M^{\e,u_{\e}}_s+X^0_s,\mathcal{L}_{X_s^\e})-
\sigma (\lambda(\e)M^{\e,u_{\e}}_s+X^0_s,\mathcal{L}_{X_s^\e})  \|_{\rr^d \otimes \rr^d}^2 \d s \nonumber \\
&+\frac{\e}{\lambda^2(\e)} \int_0^t \| \sigma(\lambda(\e)M^{\e,u_{\e}}_s+X^0_s,\mathcal{L}_{X_s^\e})-
\sigma (X^0_s,\mathcal{L}_{X_0^\e})  \|_{\rr^d \otimes \rr^d}^2 \d s \nonumber \\
&+\frac{\e}{\lambda^2(\e)} \int_0^t \| \sigma (X^0_s,\mathcal{L}_{X_0^\e})  \|_{\rr^d \otimes \rr^d}^2 \d s \nonumber \\
\le& \frac{T \e}{\lambda^2(\e)} \rho_{\sigma,\e }^2 +\frac{L \e}{ \lambda^2(\e)} \int_0^t \left( \lambda^2(\e)|M^{\e,u_{\e}}_s|^2 +W_2^2(\LL_{X_s^\e},\LL_{X_s^0}) \right) \d s \nonumber \\
&+\frac{\e}{ \lambda^2(\e)} \int_0^t \| \sigma (X^0_s,\mathcal{L}_{X_0^\e})  \|_{\rr^d \otimes \rr^d}^2 \d s \nonumber \\
\le & \frac{T \e }{ \lambda^2(\e)} \rho_{\sigma,\e}^2 +C \e \int_0^t |M^{\e,u_{\e}}_s|^2 \d s \nonumber \\
&+\frac{C\e }{ \lambda^2 (\e)} \int_0^t \ee |X_s^\e -X_s^0|^2 \d s + \frac{\e}{\lambda^2(\e)}
\int_0^t \| \sigma (X^0_s,\mathcal{L}_{X_0^\e})  \|_{\rr^d \otimes \rr^d}^2 \d s \nonumber \\
\le& C \int_0^t  |M^{\e,u_{\e}}_s|^2 \d s +C.
\end{align}

Recall $\varphi_\e =(\ \psi_\e -1 )/ \lambda(\e)$, by {\bf (H6)'}, \eqref{Lipbound}, for any $\e \in (0,\e_3)$,
\begin{align}\label{MI6}
I_6(t)=&2 \int_0^t \int_Z \left\langle  G_{\e}(\lambda(\e)M^{\e,u_{\e}}_{s}+X^0_{s},\mathcal{L}_{X_s^\e},z ) \frac{(\psi_\e(s,z)-1)}{\lambda(\e)}, M_s^{\e, u_\e} \right\rangle \nu( \d z) \d s \nonumber\\
=&2 \int_0^t \int_Z \left\langle  \left[ G_{\e}(\lambda(\e)M^{\e,u_{\e}}_{s}+X^0_{s},\mathcal{L}_{X_s^\e},z )
-G(\lambda(\e)M^{\e,u_{\e}}_{s}+X^0_{s},\mathcal{L}_{X_s^\e},z )\right] \right. \nonumber\\
& \left. \times \varphi_\e(s,z), M_s^{\e, u_\e} \right\rangle \nu( \d z) \d s \nonumber\\
&+2 \int_0^t \int_Z \left\langle  \left[ G_{\e}(\lambda(\e)M^{\e,u_{\e}}_{s}+X^0_{s},\mathcal{L}_{X_s^\e},z )
-G(0,\delta_0,z )\right] \varphi_\e(s,z), M_s^{\e, u_\e} \right\rangle \nu( \d z) \d s \nonumber\\
&+2 \int_0^t \int_Z \left\langle  G(0,\delta_0,z ) \varphi_\e(s,z), M_s^{\e, u_\e} \right\rangle \nu( \d z) \d s \nonumber\\
\le & 2 \rho_{G,\e} \int_0^t \int_Z  L_3(z)  |\varphi_\e(s,z)|| M_s^{\e, u_\e}| \nu( \d z) \d s \nonumber\\
&+2 \int_0^t \int_Z L_1(z) \left( |\lambda(\e)   M_s^{\e, u_\e}+X_s^0|+W_2(\LL_{X_s^\e},\delta_0)\right)
|\varphi_\e(s,z)|| M_s^{\e, u_\e}| \nu( \d z) \d s \nonumber \\
&+2 \int_0^t \int_Z L_2(z) |\varphi_\e(s,z)|| M_s^{\e, u_\e}| \nu( \d z) \d s \nonumber \\
\le& C \int_0^t \int_Z \left(  |\lambda(\e)   |M_s^{\e, u_\e}|+|X_s^0|+W_2(\LL_{X_s^\e}, \delta_0 )  \right) |\varphi_\e(s,z)|| M_s^{\e, u_\e}| \nu( \d z) \d s \nonumber \\
&+C \int_0^t \int_Z \left(L_2(z)+L_3(z) \right) |\varphi_\e(s,z)|| M_s^{\e, u_\e}| \nu( \d z) \d s \nonumber \\
\le& C \int_0^t \int_Z \left( L_1(z)+L_2(z)+L_3(z)  \right) |\varphi_\e(s,z)|| M_s^{\e, u_\e}|^2 \nu( \d z) \d s \nonumber \\
&+ C \int_0^t \int_Z \left( L_1(z)+L_2(z)+L_3(z)  \right) |\varphi_\e(s,z)|\nu( \d z) \d s.
\end{align}

To deduce the last inequality, the following facts have been used
\benu
\item
$X^0 \in C([0,T], \rr^d)$;
\item
$W_2(\LL_{X_s^\e}, \delta_0 ) \le W_2(\LL_{X_s^\e}, \LL_{X_s^0})+|X_s^0|$.
\nenu

Set
\begin{align}
D_\e := \int_0^T \left( |\phi_\e(s)|^2+1  \right) \d s + \int_0^T \int_Z (L_1(z)+L_2(z)+ L_3(z)) |\varphi_\e(s,z)| \nu(\d z) \d s.
\end{align}

By substituting \eqref{MI1}-\eqref{MI6} back into \eqref{MI} and applying Gronwall's inequality, we obtain
\beq\label{MQ1}
|M_t^{\e,u_\e}|^2 \le e^{CD_\e} \left\{ C D_\e + \sup_{s\in [0,T]} |I_2(s)+I_5(s)+I_8(s)| \right\}
\nneq
for all $\e \in (0,\e_3],\ t\in[0,T]$.

Since $(\phi_\e,\varphi_\e) \in \SS_1^m \times \SS_\e^m$ $P$-a.s., we have
\beq\label{MQ2}
\frac{1}{2} \int_0^T |\phi_\e (s)|^2 \le m, \ P\mbox{-a.s. } \forall \e \in (0,\e_3].
\nneq

Hence by \eqref{LL23}, \eqref{MQ1} and \eqref{MQ2}, there exists some constant $C_0>0$ such that for each $\e \in (0, \e_3]$,
\beq\label{MMM}
\ee \left( \sup_{t \in [0,T]} |M_t^{\e,u_\e}|^2 \right) \le
C_0\left\{  1+ \ee \left( \sup_{t \in [0,T]} I_2(t) \right) +\ee \left( \sup_{t \in [0,T]} I_5(t) \right) +\ee \left( \sup_{t \in [0,T]} I_8(t) \right)   \right\}.
\nneq

By Burkholder-Davis-Gundy's inequality, {\bf (H2)', (H5)}, Young's inequality, \eqref{Lipbound}, \eqref{Gbound} and \eqref{MI6}, we have
\begin{align}\label{MI2}
\ee \left( \sup_{t \in [0,T]} I_2(t) \right) \le& \frac{C \sqrt{\e}}{ \lambda(\e )} \ee \left[ \int_0^T  |M_s^{\e,u_\e}|^2 \| \sigma_\e ( \lambda (\e )  M_s^{\e,u_\e}+X_s^0, \LL_{X_s^\e} )\|_{\rr^d \otimes \rr^d}^2 \d s
\right]^{\frac{1}{2}}\nonumber \\
\le & \frac{C T \sqrt{\e}}{ \lambda (\e)} \ee \left( \sup_{s \in [0,T]} | M_s^{\e,u_\e}|^2 \right)+
\frac{C \sqrt{\e}}{ \lambda(\e) } \ee \left(\int_0^T \| \sigma_\e ( \lambda (\e )  M_s^{\e,u_\e}+X_s^0, \LL_{X_s^\e} )\|_{\rr^d \otimes \rr^d}^2 \d s  \right) \nonumber \\
\le & \frac{C T \sqrt{\e}}{ \lambda (\e)} \ee \left( \sup_{s \in [0,T]} | M_s^{\e,u_\e}|^2 \right)
+ \frac{C \sqrt{\e} \rho_{\sigma,\e}^2}{ \lambda (\e)} + C \sqrt{\e} \lambda(\e) \ee \int_0^T |M_s^{\e,u_\e} |^2 \d s
\nonumber \\
&+ \frac{C \sqrt{\e}}{ \lambda(\e) } \left( \int_0^T \left[ W_2^2(\LL_{X_s^\e}, \LL_{X_s^0})+\| \sigma(X_s^0, \LL_{X_s^0})\|_{\rr^d \otimes \rr^d}^2 \right]  \right) \nonumber \\
\le & C \left( \frac{\sqrt{\e }}{ \lambda(\e)}+\sqrt{\e} \lambda(\e) \right) \ee \left(  \sup_{s \in [0,T]} |M_s^{\e,u_\e}|^2  \right) +C.
\end{align}

Similarly, by {\bf (H6)'}, for any $\e \in (0,\e_3]$, we have
\begin{align}\label{MI58}
&C_0 \left( \ee \left( \sup_{t \in [0,T]} I_5(t) \right) +\ee \left( \sup_{t \in [0,T]} I_8(t) \right) \right) \nonumber\\
\le& \ee \left( \frac{ 2 \e }{\lambda(\e)} \int_0^T \int_Z \langle G_\e(\lambda(\e)M_{s-}^{\e,u_\e}+X_{s-}^0, \LL_{X_s^\e},z), M_s^{\e,u_\e} \rangle \tilde{N}^{\e^{-1} \psi_\e} (\d z,\d s)\right) \nonumber \\
&+\ee \left( \frac{\e^2}{ \lambda^2(\e)}  \int_0^T \int_Z | G_\e(\lambda(\e)M_{s-}^{\e,u_\e}+X_{s-}^0, \LL_{X_s^\e},z)  |^2 N^{\e^{-1} \psi_\e} (\d z,\d s) \right) \nonumber \\
\le & \frac{C \e}{ \lambda (\e) } \ee \left(  \int_0^T \int_Z | G_\e(\lambda(\e)M_{s-}^{\e,u_\e}+X_{s-}^0, \LL_{X_s^\e},z)  |^2 |M_s^{\e, u_\e}|^2 N^{\e^{-1} \psi_\e} (\d z, \d s)  \right)^{\frac{1}{2}} \nonumber \\
&+ \frac{\e}{ \lambda^2(\e)} \ee \left( \int_0^T \int_Z   | G_\e(\lambda(\e)M_{s-}^{\e,u_\e}+X_{s-}^0, \LL_{X_s^\e},z)  |^2   \psi_\e(s,z) \nu(\d z) \d s \right) \nonumber \\
\le & \frac{1}{10} \ee \left( \sup_{s \in [0,T]} |M_s^{\e, u_\e} |^2 \right)\nonumber \\
&+\frac{C \e }{ \lambda^2(\e)} \ee \left( \int_0^T \int_Z | G_\e(\lambda(\e)M_{s-}^{\e,u_\e}+X_{s-}^0, \LL_{X_s^\e},z)  -G(\lambda(\e)M_{s-}^{\e,u_\e}+X_{s-}^0, \LL_{X_s^\e},z)|^2 \psi_\e(s,z) \nu(\d z) \d s \right)\nonumber \\
&+\frac{C \e }{ \lambda^2(\e)} \ee \left( \int_0^T \int_Z |G(\lambda(\e)M_{s-}^{\e,u_\e}+X_{s-}^0, \LL_{X_s^\e},z)-
G(0, \delta_0, z)|^2 \psi_\e(s,z) \nu(\d z) \d s \right)\nonumber \\
&+\frac{C \e }{ \lambda^2(\e)} \ee \left( \int_0^T \int_Z |G(0, \delta_0, z)|^2 \psi_\e(s,z) \nu(\d z) \d s \right)\nonumber \\
\le & \frac{1}{10} \ee \left( \sup_{s \in [0,T]} |M_s^{\e, u_\e} |^2 \right)\nonumber \\
&+\frac{C \e }{ \lambda^2(\e)} \ee \left(  \int_0^T \int_Z \left( \rho_{G,\e}^2 L_3^2(z) + L_1^2(z) \left( \lambda(\e) |M_s^{\e,u_\e}| +|X_s^0| +W_2(\LL_{X_s^\e,\delta_0}) \right)^2\right)\psi_\e(s,z) \nu(\d z) \d s  \right)\nonumber \\
&+\frac{C \e }{ \lambda^2(\e)} \ee\int_0^T \int_Z L_2^2(z) \psi_\e(s,z) \nu(\d z) \d s \nonumber\\
\le& \frac{1}{10} \ee \left( \sup_{s \in [0,T]} |M_s^{\e, u_\e} |^2 \right)\nonumber \\
&+C \e \sup_{\psi \in S_{+,\e}^K} \int_0^T \int_Z L_1^2(z) \psi(s,z) \nu(\d z ) \d s \ee \left(\sup_{s \in [0,T] }|M_s^{\e,u_\e}|^2 \right)\nonumber \\
&+C \sup_{\psi \in S_{+,\e}^K} \int_0^T \int_Z (L_1^2(z)+L_2^2(z)+L_3^2(z)) \psi(s,z) \nu(\d z ) \d s \nonumber \\
& \times \left(  1+ \sup_{s \in [0,T] }|X_s^0|^2 + \ee \left(  \sup_{s\in [0,T]} |X_s^\e - X_s^0|^2\right)\right) \nonumber \\
\le & \frac{1}{10} \ee \left( \sup_{s \in [0,T]} |M_s^{\e, u_\e} |^2 \right)\nonumber \\
&+C \e \sup_{\psi \in S_{+,\e}^K} \int_0^T \int_Z L_1^2(z) \psi(s,z) \nu(\d z ) \d s \ee \left(\sup_{s \in [0,T] }|M_s^{\e,u_\e}|^2 \right)\nonumber \\
&+C \sup_{\psi \in S_{+,\e}^K} \int_0^T \int_Z (L_1^2(z)+L_2^2(z)+L_3^2(z)) \psi(s,z) \nu(\d z ) \d s.
\end{align}

Combining \eqref{MMM}-\eqref{MI58}, we have
\begin{align}
&\left(\frac{9}{10} -C\frac{\sqrt{\e}}{ \lambda{\e}} + C \sqrt{\e} \lambda{\e} -C \e \sup_{\psi \in S_{+,\e}^K} \int_0^T \int_Z L_1^2(z) \psi(s,z) \nu(\d z ) \d s \right) \nonumber \\
\times &  \ee \left(\sup_{s \in [0,T] }|M_s^{\e,u_\e}|^2 \right) \nonumber \\
\le & C \left(  1+ \sup_{\psi \in S_{+,\e}^K} \int_0^T \int_Z (L_1^2(z)+L_2^2(z)+L_3^2(z)) \psi(s,z) \nu(\d z ) \d s\right).
\end{align}

Hence, by \eqref{LL21} and \eqref{Lambda}, there exists some constant $\kappa_0>0$ such that for any $\e \in (0,\kappa_0]$,
$$
\left(\frac{9}{10} -C\frac{\sqrt{\e}}{ \lambda{\e}} + C \sqrt{\e} \lambda{\e} -C \e \sup_{\psi \in S_{+,\e}^K} \int_0^T \int_Z L_1^2(z) \psi(s,z) \nu(\d z ) \d s \right)\ge \frac{1}{5}>0.
$$
Hence, we have
$$
\sup_{\e \in (0,\kappa_0]} \ee \left(  \sup_{s\in[0,T]} |M_s^{\e,u_{\e}}|^2\right) < + \infty.
$$

\nprf

Finally, the verification of ({\bf MDP2}) is given in the next proposition.
Recall $\tilde{u_\e}$ in \eqref{md2}.

\bprop
For any $\varpi>0$,
\beq\label{MDP22}
\lim_{\e\to 0} P \left( \sup_{t \in [0,T]} |M_t^{\e, u_\e} -V_t^{\tilde{u_\e}} |> \varpi \right)=0.
\nneq
\nprop

\bprf
For each fixed $\e >0$ and $j \in \nn$, define a stopping time
$$
\tau_\e^j=\inf \{ t \ge 0: |M_t^{\e, u_\e} | \ge j\} \wedge T.
$$

By Lemma \ref{Mbound}, we have
$$
P(\tau_\e^j <T)\le \frac{\ee \left( \sup_{t \in [0,T]} |M_t^{\e, u_\e}|^2\right)}{j^2} \le \frac{C}{j^2}, \ \forall \e \in (0,\kappa_0],
$$
where $\kappa_0$ is the same as in Lemma \ref{Mbound}.

Let $Q_s^\e=M_s^{\e, u_\e}-V_s^{\tilde{u}_\e}$ for each $s \in [0,T]$. Notice that the corresponding equations $M_s^{\e, u_\e}$ and $ V_s^{\tilde{u}_\e}$ satisfied are distribution independent SDEs. By It\^{o}'s formula, we have
\begin{align}\label{MQ}
| Q_{t \wedge \tau_\e^{j}}^\e|^2=&-2\int_0^{t \wedge \tau_\e^{j}} \left\langle  Q_s^\e, \mathrm{d} \hat{K}^{\e,u_\e}_s- \mathrm{d} \hat{K}^{\tilde{u}_\e}_s  \right\rangle \nonumber\\
&+2 \int_0^{t \wedge \tau_\e^{j}} \left\langle \frac{1}{\lambda(\e)}\left(b_\e(\lambda(\e)M^{\e,u_{\e}}_s+X_s^0,\mathcal{L}_{X_s^\e})-b(X_s^0,\mathcal{L}_{X_s^0})\right)- \nabla b(X^0_s,\mathcal{L}_{X^0_s})V^{\tilde{u}_\e}_s, Q_s^\e \right\rangle \d s \nonumber\\
&+2 \frac{\sqrt{\e}}{ \lambda(\e)} \int_0^{t \wedge \tau_\e^{j}} \left\langle Q_s^\e,  \sigma_{\e}(\lambda(\e)M^{\e,u_{\e}}_s+X^0_s,\mathcal{L}_{X_s^\e})   \d W_s \right\rangle \nonumber\\
&+2 \int_0^{t \wedge \tau_\e^{j}} \left\langle \left( \sigma_{\e}(\lambda(\e)M^{\e,u_{\e}}_s+X^0_s,\mathcal{L}_{X_s^\e}) - \sigma(X_s^0,\mathcal{L}_{X_s^0})\right) \phi_\e(s) ,Q_s^\e\right\rangle \d s \nonumber\\
&+\frac{2 \e}{ \lambda(\e)} \int_0^{t \wedge \tau_\e^{j}} \int_Z \left\langle  G_{\e}(\lambda(\e)M^{\e,u_{\e}}_{s-}+X^0_{s-},\mathcal{L}_{X_s^\e},z )  ,Q_s^\e   \right\rangle \tilde{N}^{\e^{-1}\psi_\e} (\d z, \d s) \nonumber\\
&+2 \int_0^{t \wedge \tau_\e^{j}} \int_Z \left\langle  G_{\e}(\lambda(\e)M^{\e,u_{\e}}_{s}+X^0_{s},\mathcal{L}_{X_s^\e},z )\varphi_\e(s,z)\right.\nonumber \\
&\quad\left.-G(X_s^0,\LL_{X_s^0},z) \varphi_\e(s,z) 1_{\{|\varphi_\e| \le \frac{\beta}{\lambda(\e)}\}}(s,z) ,Q_s^\e   \right\rangle \nu(\d z) \d s  \nonumber\\
&+\frac{\e}{ \lambda^2(\e)} \int_0^{t \wedge \tau_\e^{j}} \| \sigma_{\e}(\lambda(\e)M^{\e,u_{\e}}_s+X^0_s,\mathcal{L}_{X_s^\e}) \|_{\rr^d \otimes \rr^d}^2 \d s \nonumber \\
&+ \frac{\e^2}{ \lambda^2(\e)} \int_0^{t \wedge \tau_\e^{j}} \int_Z | G_{\e}(\lambda(\e)M^{\e,u_{\e}}_{s}+X^0_{s},\mathcal{L}_{X_s^\e},z ) |^2  N^{\e^{-1}\psi_\e} (\d z, \d s) \nonumber \\
=&:J_1(t)+J_2(t)+J_3(t)+J_4(t)+J_5(t)+J_6(t)+J_7(t)+J_8(t).
\end{align}

Due to \eqref{nulim} and the fact that
$
\tilde{u}_\e \in S_1^m \times B_2(\sqrt{m \kappa_2 (1)}),
$
there exists some $\Omega^0 \in \FF$ with $P (\Omega^0)=1$ such that
\beq\label{Vbound}
\kappa:= \sup_{\e \in (0,\kappa_0]} \sup_{ \omega \in \Omega^0, t \in [0,T]} | V_t^{\tilde{u}_\e} (\omega)| < +\infty.
\nneq

Recall the constant $\e_3 $ appearing in \eqref{e3}. Set
$
\e_4 = \e_3 \wedge \kappa_0.
$

By Definition \ref{solution}, we have
\beq\label{MJ1}
J_1(t)=-2\int_0^{t \wedge \tau_\e^{j}} \left\langle  Q_s^\e, \mathrm{d} \hat{K}^{\e,u_\e}_s- \mathrm{d} \hat{K}^{\tilde{u}_\e}_s  \right\rangle \le 0.
\nneq

For $J_2(t)$, we have
\begin{align}\label{MJ2}
J_2(t)=&2 \int_0^{t \wedge \tau_\e^{j}} \left\langle \frac{1}{\lambda(\e)}\left(b_\e(\lambda(\e)M^{\e,u_{\e}}_s+X_s^0,\mathcal{L}_{X_s^\e})-b(X_s^0,\mathcal{L}_{X_s^0})\right)- \nabla b(X^0_s,\mathcal{L}_{X^0_s})V^{\tilde{u}_\e}_s, Q_s^\e \right\rangle \d s \nonumber\\
=&2 \int_0^{t \wedge \tau_\e^{j}} \left\langle \frac{1}{\lambda(\e)}\left(b_\e(\lambda(\e)M^{\e,u_{\e}}_s+X_s^0,\mathcal{L}_{X_s^\e})-b(\lambda(\e)M^{\e,u_{\e}}_s+X_s^0,\mathcal{L}_{X_s^\e})\right), Q_s^\e \right\rangle \d s \nonumber\\
&+2 \int_0^{t \wedge \tau_\e^{j}} \left\langle \frac{1}{\lambda(\e)}\left(b(\lambda(\e)M^{\e,u_{\e}}_s+X_s^0,\mathcal{L}_{X_s^\e})-b(\lambda(\e)M^{\e,u_{\e}}_s+X_s^0,\mathcal{L}_{X_s^0})\right), Q_s^\e \right\rangle \d s \nonumber\\
&+2 \int_0^{t \wedge \tau_\e^{j}} \left\langle \frac{1}{\lambda(\e)}\left(b(\lambda(\e)M^{\e,u_{\e}}_s+X_s^0,\mathcal{L}_{X_s^0})-b(X_s^0,\mathcal{L}_{X_s^0})\right)-
\nabla b(X^0_s,\mathcal{L}_{X^0_s})M_s^{\e, u_\e}, Q_s^\e \right\rangle \d s \nonumber\\
&+2 \int_0^{t \wedge \tau_\e^{j}} \left\langle \nabla b(X^0_s,\mathcal{L}_{X^0_s})\left(M_s^{\e, u_\e}-V^{\tilde{u}_\e}_s\right), Q_s^\e \right\rangle \d s \nonumber\\
=&: J_{2,1}(t)+J_{2,2}(t)+J_{2,3}(t)+J_{2,4}(t).
\end{align}

For $J_{2,3}(t)$, by the mean value theorem and {\bf (C0)} and {\bf (C1)}, for any $\e \in (0,\e_4]$, there exists
$
\theta_s^\e \in [0,1]
$
such that
\begin{align}\label{MJ23}
J_{2,3}(t) =& 2 \int_0^{t \wedge \tau_\e^{j}} \left\langle \frac{b(\lambda(\e)M^{\e,u_{\e}}_s+X_s^0,\mathcal{L}_{X_s^0})-b(X_s^0,\mathcal{L}_{X_s^0})}{\lambda(\e)}-
\nabla b(X^0_s,\mathcal{L}_{X^0_s})M_s^{\e, u_\e}, Q_s^\e \right\rangle \d s \nonumber \\
\le& 2 L' \int_0^{t \wedge \tau_\e^{j}} \| \nabla b( \lambda(\e)M^{\e,u_{\e}}_s \theta_s^\e + X_s^0,\mathcal{L}_{X_s^0} ) - \nabla b(X^0_s,\mathcal{L}_{X^0_s})\|_{\rr^d \otimes \rr^d } |M^{\e,u_{\e}}_s | Q_s^\e | \d s \nonumber \\
\le & 2 L'\int_0^{t \wedge \tau_\e^{j}} \left( 1+| \lambda(\e)M^{\e,u_{\e}}_s \theta_s^\e + X_s^0 |^{q'}+|X_s^0|^{q'}  \right) | \lambda(\e)M^{\e,u_{\e}}_s \theta_s^\e | |M^{\e,u_{\e}}_s | |Q_s^\e | \d s \nonumber \\
\le & C_j \lambda(\e),
\end{align}
where
$$
C_j=2L' \left(1 + |j+ \sup_{s\in [0,T]}|X_s^0| |^{q'} +\sup_{s \in [0,T]} |X_s^0|^{q'} \right) j^2 (j+\kappa)T,
$$
which is independent of $\e $.

In the following proof, $C_j$ will denote generic constants which are independent of $\e$, may be different from line to line.

\begin{align}\label{MJ2124}
J_{2,1}(t)+J_{2,2}(t)+J_{2,4}(t)\le& 2 \frac{\rho_{b,\e}}{ \lambda(\e )} \int_0^{t \wedge \tau_\e^{j}} |Q_s^\e| \d s \nonumber \\
&+2L \int_0^{t \wedge \tau_\e^{j}} \frac{\left(\ee |X_s^\e -X_s^0|^2 \right)^\frac{1}{2}}{\lambda(\e)} |Q_s^\e | \d s \nonumber\\
&+2 \int_0^{t }\| \nabla b(X^0_{s \wedge \tau_\e^{j}},\mathcal{L}_{X^0_{s \wedge \tau_\e^{j}}})\|_{\rr^d \otimes \rr^d}|Q_{s \wedge \tau_\e^{j}}^\e |^2 \d s.
\end{align}

Hence
\begin{align}\label{MJ2Z}
J_2(t) \le& C_j\left( \frac{\rho_{b,\e}}{ \lambda(\e)} + \frac{ \left(\e + \rho_{b,\e}^2+ \e \rho_{\sigma,\e}^2+\e \rho_{G,\e}^2\right)^{\frac{1}{2}}}{\lambda(\e)} + \lambda(\e)\right)\nonumber \\
&+2 \int_0^{t }\| \nabla b(X^0_{s \wedge \tau_\e^{j}},\mathcal{L}_{X^0_{s \wedge \tau_\e^{j}}})\|_{\rr^d \otimes \rr^d}|Q_{s \wedge \tau_\e^{j}}^\e |^2 \d s.
\end{align}

Inserting the inequalities \eqref{MJ2Z} into \eqref{MQ}, and using Gronwall's inequality, we deduce that for any $\e \in (0,\e_4]$
\begin{align}
&\sup_{e\in [0,T]} | Q_{s \wedge \tau_\e^{j}}^\e |^2\nonumber \\
\le& \exp \{ 2 \int_0^T \|\nabla b(X_s^0, \LL_{X_s^0})\|_{\rr^d \otimes \rr^d} \d s \} \nonumber \\
&\times \left\{ C_j \left( \frac{\rho_{b,\e}}{ \lambda(\e)} + \frac{ \left(\e + \rho_{b,\e}^2+ \e \rho_{\sigma,\e}^2+\e \rho_{G,\e}^2\right)^{\frac{1}{2}}}{\lambda(\e)} + \lambda(\e)\right)+\sum_{i=3}^{8} \sup_{s \in [0,T] } |J_i(s)| \right\}.
\end{align}

Set $C_0=\exp \{ 2 \int_0^T \|\nabla b(X_s^0, \LL_{X_s^0})\|_{\rr^d \otimes \rr^d} \d s \}$.

Since
\begin{align}
|J_3(t)| \le& \frac{2 \sqrt{\e}}{ \lambda(\e)} \left( \int_0^{t \wedge \tau_\e^{j}} \rho_{\sigma, \e} |Q_s^\e| +L W_2(\LL_{X_s^\e}, \LL_{X_s^0}) |Q_s^\e| \right. \nonumber\\
&\left.
+L \lambda(\e) |Q_s^\e|^2 + L \lambda(\e) |V_\e^{\tilde{u}_\e}| |Q_s^\e|+ \int_0^{t \wedge \tau_\e^{j}} \| \sigma(X_s^0,\LL_{X_s^0})\|_{\rr^d \otimes \rr^d} | Q_s^\e | \d s \right),
\end{align}
we have
\begin{align}\label{MJ37}
&C_0 \left( \ee \left( \sup_{t \in [0,T]} |J_3(t)|\right) +\ee \left( \sup_{t \in [0,T]} |J_7(t)|\right)\right) \nonumber \\
\le& \frac{1}{10} \ee \left( \sup_{t \in [0,T]} |Q_{t \wedge \tau_\e^j}^\e|^2\right) \nonumber \\
&+ \frac{C \e}{ \lambda^2(\e)} \ee \left( \int_0^{t \wedge \tau_\e^{j}} \| \sigma_\e \left(\lambda(\e)M_s^{\e,u_\e}+X_s^\e \right)   \|_{\rr^d \otimes \rr^d}^2 \right) \nonumber \\
\le &  \frac{1}{10} \ee \left( \sup_{t \in [0,T]} |Q_{t \wedge \tau_\e^j}^\e|^2\right)+ \frac{C \e \rho_{\sigma,\e}^2}{ \lambda(\e)} \nonumber \\
&+ \frac{C \e }{\lambda^2(\e)} \ee \left(  \int_0^{t \wedge \tau_\e^{j}} \lambda^2(\e) |M_s^{\e,u_\e} |^2 +\ee \left( |X_s^\e- X_s^0|^2\right) \d s   \right) + \frac{C \e }{\lambda^2(\e)} \int_0^T \| \sigma(X_s^0,\LL_{X_s^0}) \|_{\rr^d \otimes \rr^d}^2 \d s \nonumber \\
\le & \frac{1}{10} \ee \left( \sup_{t \in [0,T]} |Q_{t \wedge \tau_\e^j}^\e|^2\right)+ C_j \frac{\e}{\lambda^2(\e)} \left( 1+ \rho_{\sigma,\e}^2+\e+\rho_{b,\e}^2+\e \rho_{\sigma,\e}^2+\e \rho_{G,\e}^2 \right).
\end{align}

Since
\begin{align}
J_4(t)=&2 \int_0^{t \wedge \tau_\e^{j}} \left\langle \left( \sigma_{\e}(\lambda(\e)M^{\e,u_{\e}}_s+X^0_s,\mathcal{L}_{X_s^\e}) - \sigma(X_s^0,\mathcal{L}_{X_s^0})\right) \phi_\e(s) ,Q_s^\e\right\rangle \d s \nonumber\\
=& 2 \int_0^{t \wedge \tau_\e^{j}} \left\langle \left( \sigma_{\e}(\lambda(\e)M^{\e,u_{\e}}_s+X^0_s,\mathcal{L}_{X_s^\e}) - \sigma(\lambda(\e)M^{\e,u_{\e}}_s+X^0_s,\mathcal{L}_{X_s^\e}) \right) \phi_\e(s) ,Q_s^\e\right\rangle \d s \nonumber\\
&+2 \int_0^{t \wedge \tau_\e^{j}} \left\langle \left( \sigma(\lambda(\e)M^{\e,u_{\e}}_s+X^0_s,\mathcal{L}_{X_s^\e}) - \sigma(X_s^0,\mathcal{L}_{X_s^0})\right) \phi_\e(s) ,Q_s^\e\right\rangle \d s,
\end{align}
we have
\begin{align}\label{MJ4}
C_0 \ee \left( \sup_{t \in [0,T]}  J_4(t)\right) \le& C \rho_{\sigma,\e} \ee \int_0^{t \wedge \tau_\e^{j}} |\phi_\e(s)| | Q_s^\e| \d s \nonumber\\
&+C \ee \int_0^{t \wedge \tau_\e^{j}} \left( \lambda(\e) | M_s^{\e,u_\e}|+ W_2(\LL_{X_s^\e},\LL_{X_s^0}) \right)|\phi_\e(s)| |Q_s^\e| \d s \nonumber \\
\le & \left( C \rho_{\sigma,\e} j^2+ \lambda(\e) + \left( \e +\rho_{b,\e}^2+\e \rho_{\sigma,\e}^2 +\e \rho_{G,\e}^2\right)^{\frac{1}{2}} \right) \ee \left( \int_0^T |\phi_\e(s)|^2 \d s\right)^{\frac{1}{2}} \nonumber \\
\le & C_j\left( \rho_{\sigma,\e}+ \lambda(\e) + \left( \e +\rho_{b,\e}^2+\e \rho_{\sigma,\e}^2 +\e \rho_{G,\e}^2\right)^{\frac{1}{2}}\right).
\end{align}

By Burkholder-Davis-Gundy's inequality, \eqref{LL21} and \eqref{e2}, using the similar proof of \eqref{MI58}, we have
 for any $ \e \in (0, \e_4]$
\begin{align} \label{MJ58}
&C_0 \left( \ee \left( \sup_{t\in [0,T] } |J_5(t)|\right)+\ee \left( \sup_{t\in [0,T] } |J_8(t)|\right)\right) \nonumber \\
\le & \frac{1}{10} \ee \left( \sup_{t \in [0,T]} | Q^\e_{t \wedge \tau_\e^j}|^2\right)+ \frac{C \e }{ \lambda^2 (\e)} \ee \left( \int_0^{T \wedge \tau_\e^j} \int_Z |G_\e \left( \lambda(\e)M_s^{\e, u_\e} + X_s^0, \LL_{X_s^\e},z \right)  |^2 \psi_\e(s,z) \nu(\d z) \d s \right) \nonumber \\
\le & \frac{1}{10} \ee \left( \sup_{t \in [0,T]} | Q^\e_{t \wedge \tau_\e^j}|^2\right) \nonumber \\
&+ \frac{C \e \rho_{G,\e}^2 }{ \lambda^2(\e) } \ee \left( \int_0^{T \wedge \tau_\e^j
} \int_Z  L_3^2(z) \psi_\e(s,z) \nu(\d z) \d s \right) \nonumber \\
&+\frac{C \e }{ \lambda^2 (\e)} \ee \left( \int_0^{T \wedge \tau_\e^j } \int_Z |G(0,\delta_0,z) |^2 \psi_\e (s,z) \nu(\d z) \d s  \right) \nonumber \\
&+\frac{C \e}{ \lambda^2(\e)} \ee \left( \int_0^{T \wedge \tau_\e^j} \int_Z \left( | \lambda(\e) M_s^{\e,u_\e} + X_s^0  |^2 + \ee (|X_s^\e|^2 )  \right) L_1^2(z) \psi_\e ( s, z) \nu(\d z) \d s \right) \nonumber \\
\le & \frac{1}{10} \ee \left( \sup_{t \in [0,T]} | Q^\e_{t \wedge \tau_\e^j}|^2\right) \nonumber \\
& + \frac{C_j \e }{ \lambda^2(\e)} \sup_{\psi \in S_{+,\e}^m} \left( \int_0^T \int_Z \left( L_1^2(z)+L_2^2(z)+L_3^2(z) \right)\psi(s,z) \nu(\d z) \d s \right) \nonumber \\
\le & \frac{1}{10} \ee \left( \sup_{t \in [0,T]} | Q^\e_{t \wedge \tau_\e^j}|^2\right)+ \frac{C_j \e}{ \lambda^2(\e)}.
\end{align}

Note that
\begin{align}
J_6(t) =& 2 \int_0^{t \wedge \tau_\e^{j}} \int_Z \left\langle  G_{\e}(\lambda(\e)M^{\e,u_{\e}}_{s}+X^0_{s},\mathcal{L}_{X_s^\e},z )\varphi_\e(s,z)\right.\nonumber \\
&\quad\left.-G(X_s^0,\LL_{X_s^0},z) \varphi_\e(s,z) 1_{\{|\varphi_\e| \le \frac{\beta}{\lambda(\e)}\}}(s,z) ,Q_s^\e   \right\rangle \nu(\d z) \d s  \nonumber\\
=& 2 \int_0^{t \wedge \tau_\e^{j}} \int_Z \left\langle  G_{\e}(\lambda(\e)M^{\e,u_{\e}}_{s}+X^0_{s},\mathcal{L}_{X_s^\e},z )\varphi_\e(s,z)\right.\nonumber \\
&\quad\left.-G(\lambda(\e)M^{\e,u_{\e}}_{s}+X^0_{s},\mathcal{L}_{X_s^\e},z ) \varphi_\e(s,z) ,Q_s^\e   \right\rangle \nu(\d z) \d s  \nonumber\\
&+2 \int_0^{t \wedge \tau_\e^{j}} \int_Z \left\langle  G (\lambda(\e)M^{\e,u_{\e}}_{s}+X^0_{s},\mathcal{L}_{X_s^\e},z )\varphi_\e(s,z)\right.\nonumber \\
&\quad\left. - G(X_s^0,\LL_{X_s^0},z) \varphi_\e(s,z), Q_s^\e   \right\rangle \nu(\d z) \d s  \nonumber\\
&+ 2 \int_0^{t \wedge \tau_\e^{j}} \int_Z \left\langle  \left( G(X_s^0,\LL_{X_s^0},z)-G(0,\delta_0,z)  \right) \varphi_\e(s,z) 1_{\{|\varphi_\e| > \frac{\beta}{\lambda(\e)}\}}(s,z) ,Q_s^\e   \right\rangle \nu(\d z) \d s \nonumber \\
&+ 2 \int_0^{t \wedge \tau_\e^{j}} \int_Z \left\langle  G(0,\delta_0,z) \varphi_\e(s,z) 1_{\{|\varphi_\e| > \frac{\beta}{\lambda(\e)}\}}(s,z) ,Q_s^\e   \right\rangle \nu(\d z) \d s.
\end{align}

Hence by {\bf (H6)'} and \eqref{Vbound}, we have
\begin{align}\label{MJ6}
&C_0 \ee \left( \sup_{t \in [0,T]} | J_6(t)| \right) \nonumber \\
\le & C \rho_{G, \e} \ee \left( \int_0^{t \wedge \tau_\e^{j}} \int_Z L_3(z) |  \varphi_\e(s,z) | |Q_s^\e | \nu(\d z) \d s  \right) \nonumber \\
& + C \ee \left( \int_0^{t \wedge \tau_\e^{j}} \int_Z L_1(z) \left( \lambda(\e)|M_s^{\e,u_\e}| +W_2(\LL_{X_s^\e}, \LL_{X_s^0}) \right) | \varphi_\e (s,z)| |Q_s^\e | \nu(\d z)  \d s   \right) \nonumber \\
&+ C \ee \left(  \int_0^{t \wedge \tau_\e^{j}} \int_Z L_1(z) |X_s^0| | Q_s^\e | | \varphi_\e(s,z)| 1_{\{ |\varphi_\e|>\frac{\beta}{\lambda(\e)} \}} (s,z) \nu(\d z) \d s   \right) \nonumber \\
&+C \ee \left(  \int_0^{t \wedge \tau_\e^{j}} \int_Z  L_2(z) |Q_s^\e| | \varphi_\e(s,z)| 1_{\{ |\varphi_\e|>\frac{\beta}{\lambda(\e)} \} } \nu (\d z) \d s \right) \nonumber \\
\le & C_j \left( \rho_{G,\e} +\lambda(\e) + \left(  \e + \rho_{b,\e}^2+ \e \rho_{\sigma,\e}^2+ \e \rho_{G,\e}^2   \right)^{\frac{1}{2}} \right) \sup_{\varphi \in S_\e^m} \int_0^T \int_Z (L_1(z)+L_3(z)) |\varphi(s,z)| \nu(\d z) \d s \nonumber \\
&+C_j \sup_{\varphi \in S_\e^m} \int_0^T \int_Z (L_1(z)+L_2(z)) |\varphi_\e (s,z) | 1_{\{ |\varphi_\e|>\frac{\beta}{\lambda(\e)} \} } (s,z) \nu(\d z) \d s \nonumber \\
\le & C_j \left( \rho_{G,\e} +\lambda(\e) + \left(  \e + \rho_{b,\e}^2+ \e \rho_{\sigma,\e}^2+ \e \rho_{G,\e}^2   \right)^{\frac{1}{2}} \right) \nonumber\\
&+C_j \sup_{\varphi \in S_\e^m} \int_0^T \int_Z (L_1(z)+L_2(z)) |\varphi_\e (s,z) | 1_{\{ |\varphi_\e|>\frac{\beta}{\lambda(\e)} \} } (s,z) \nu(\d z) \d s.
\end{align}

Combining \eqref{e2} and \eqref{MQ}-\eqref{MJ6} together, we obtain that for any $\e \in (0,\e_4]$,
\begin{align}
&\frac{8}{10} \ee \left( \sup_{t \in [0,T]} |Q_{t \wedge \tau_\e^j}^\e |^2 \right) \nonumber \\
\le & C_j \left\{ \rho_{G,\e} + \rho_{\sigma,\e} + \frac{\rho_{b,\e}}{ \lambda(\e)}+ \frac{\left( \e +\rho_{b,\e}^2 +\e \rho_{\sigma,\e}^2+ \e \rho_{G,\e}^2 \right)^\frac{1}{2}}{\lambda(\e)} \right. \nonumber \\
& \left. +\frac{\e}{ \lambda^2(\e) } +\lambda(\e)+  \left( \e +\rho_{b,\e}^2 +\e \rho_{\sigma,\e}^2+ \e \rho_{G,\e}^2 \right)^\frac{1}{2} \right. \nonumber \\
& \left. +\sup_{\varphi \in S_\e^m} \int_0^T \int_Z  (L_1(z)+L_2(z)) |\varphi_\e (s,z) | 1_{\{ |\varphi_\e|>\frac{\beta}{\lambda(\e)} \} } (s,z) \nu(\d z) \d s   \right\}.
\end{align}

By {\bf (C2)}, \eqref{Lambda} and \eqref{LL24}, it follows that
\beq
\lim_{\e \to 0} \ee \left(  \sup_{t \in [0,T]}  | M_{t \wedge \tau_\e^j}^{\e,u_\e}-V_{t \wedge \tau_\e^j}^{\tilde{u}_\e} |^2 \right)=\lim_{\e \to 0} \ee \left( \sup_{t \in [0,T]} |Q_{t \wedge \tau_\e^j}^{\e} |^2 \right)=0.
\nneq

Now for any $ \varpi >0,\ \e \in (0,\e_4]$ and $j \in \nn$, we have
\begin{align}
&P\left( \sup_{t \in [0,T]} |M_t^{\e, u_\e} -V_t^{\tilde{u_\e}} |> \varpi \right) \nonumber \\
\le & P\left( \left( \sup_{t \in [0,T]} | M_{t \wedge \tau_\e^j}^{\e,u_\e}-V_{t \wedge \tau_\e^j}^{\tilde{u}_\e}|> \varpi \right) \bigcap \left( \tau_\e^j \ge T \right) \right) +P \left(\tau_\e^j < T  \right) \nonumber\\
\le & \frac{1}{\varpi^2} \ee \left( \sup_{t \in [0,T]} | M_{t \wedge \tau_\e^j}^{\e,u_\e}-V_{t \wedge \tau_\e^j}^{\tilde{u}_\e}|^2 \right)+\frac{C}{j^2}.
\end{align}

Letting $\e \to 0$ first and then $j \to +\infty$, we get
\beq
\lim_{\e \to 0} P\left( \sup_{t \in [0,T]} |M_t^{\e, u_\e} -V_t^{\tilde{u_\e}} |> \varpi \right)=0,
\nneq
which is the desired result.

%
%
%
%
%
%
%
%

\nprf

\vskip0.3cm

\noindent{\bf Acknowledgements:} Cheng L. is supported by Natural Science Foundation of China (12001272 and 12171173). Liu W. is supported by Natural Science Foundation of China (12471143 and 12131019).

\vskip0.3cm

\end{document}